\newtheorem{theorem}{Theorem}[section]
\newtheorem{lemma}[theorem]{Lemma}
\newtheorem{proposition}[theorem]{Proposition}
\newtheorem{corollary}[theorem]{Corollary}
\theoremstyle{definition}
\newtheorem{definition}[theorem]{Definition}
\newtheorem{example}[theorem]{Example}
\newtheorem{conjecture}[theorem]{Conjecture}
\newtheorem{remark}[theorem]{Remark}
\newcommand{\bc}{{\overline{c}}}
\newcommand{\Res}{\text{Res}}
\newcommand{\res}{\text{res}}
\newcommand{\ind}{\text{ind}}
\newcommand{\reg}{\text{reg}}
\newcommand{\End}{\text{End}}
\newcommand{\Hom}{\text{Hom}}
\newcommand{\Ind}{\text{Ind}}
\newcommand{\Rep}{\text{Rep}}
\renewcommand{\O}{\mathcal{O}}
\newcommand{\g}{\mathfrak{g}}
\newcommand{\h}{\mathfrak{h}}
\newcommand{\ben}{\begin{enumerate}}
\newcommand{\een}{\end{enumerate}}
\newcommand{\CC}{{\mathbb{C}}}
\theoremstyle{plain}
\newtheorem*{sol}{Solution}
\theoremstyle{definition}
\theoremstyle{remark}
\newcommand{\solu}[1]{\begin{sol}{\bf (\ref{#1})}}
\begin{document}

\title{Parabolic induction and restriction functors for rational 
Cherednik algebras}

\author{Roman Bezrukavnikov}
\address{Department of Mathematics, Massachusetts Institute of Technology,
Cambridge, MA 02139, USA}
\email{bezrukav@math.mit.edu}

\author{Pavel Etingof}
\address{Department of Mathematics, Massachusetts Institute of Technology,
Cambridge, MA 02139, USA}
\email{etingof@math.mit.edu}

\maketitle

\section{Introduction}

Parabolic induction and restriction functors play an important
role in the representation theory of finite and affine Hecke
algebras. This makes it desirable to generalize them to the
setting of double affine Hecke algebras, or Cherednik algebras. 
However, a naive attempt to do so fails: the definition of
parabolic induction and restriction functors for finite and
affine Hecke algebras uses the fact that the Hecke algebra attached to 
a parabolic subgroup can be embedded into the Hecke algebra  
attached to the whole group, which is not the case in the double
affine setting. 

One of the main goals of this paper is to circumvent 
this difficulty in the case of rational Cherednik 
algebras. The price to pay is that our functors depend on an 
additional parameter, which is a point $b$ of the reflection
representation whose stabilizer is the parabolic subgroup 
at hand. The functors for different values of $b$ are isomorphic,
but not canonically, and there is nontrivial monodromy with
respect to $b$. 

More specifically, let $W$ be a finite group acting faithfully 
on a finite dimensional complex vector space $\h$. 
Let $c$ be a conjugation invariant function on the set $S$ of reflections in
$W$, and $H_c(W,\h)$ the corresponding rational Cherednik
algebra. Let ${\mathcal O}_c(W,\h)_0$ be the category 
of $H_c(W,\h)$-modules which are finitely generated over
$\CC[\h]$ and locally nilpotent under the action of $\h$. 
Let $W'\subset W$ be a parabolic subgroup, $\h'=\h/\h^{W'}$, and 
$c'$ be the restriction of $c$ to the set of reflections in
$W'$. Then we define the parabolic induction and restriction functors 
$$
\Res_b: {\mathcal O}_c(W,\h)_0\to {\mathcal O}_c(W',\h')_0,\
\Ind_b: {\mathcal O}_c(W',\h')_0\to {\mathcal O}_c(W,\h)_0, b\in
\h_{\rm reg}^{W'}.
$$
We show that these functors are exact, and the second one is
right adjoint to the first one. We also compute some of their
values, and study their dependence on $b$; this dependence is
characterized in terms of local systems with nontrivial
monodromy. In particular, we show that in the case $W'=1$,
the functor $\Res_b$ (where $b$ is a variable) is the same as the
KZ functor of \cite{GGOR}. 

As a by-product, we show that the category ${\mathcal
O}_c(W,\h)_\lambda$ of ``Whittaker'' modules
over $H_c(W,\h)$ (i.e. the category of $H_c(W,\h)$-modules, finitely generated
over $\CC[\h]$, on which $\CC[\h^*]^W$ acts with generalized
eigenvalue $\lambda\in \h^*$) is equivalent to ${\mathcal
O}_c(W_\lambda,\h)_0$.    
 
Next, we give some applications of the parabolic induction
and restriction functors. First, we give a simple proof of the
Gordon-Stafford theorem \cite{GS}, which characterizes 
the values of $c$ (for $W=S_n,\h=\Bbb C^{n-1}$) for which 
the rational Cherednik algebra is Morita equivalent to its
spherical subalgebra. In particular, we remove the condition 
$c\notin 1/2+\Bbb Z$, which was expected to be unnecessary. 
Also, we determine some values of $c$ for Coxeter groups for
which there exist finite dimensional representations of the
rational Cherednik algebra, and find the number of such
irreducible representations. Finally, we find all the irreducible
aspherical representations in category ${\mathcal O}$ of the rational
Cherednik algebra for $W=S_n$. They turn out to coincide with 
representations for $c\in (-1,0)$ which are killed by the
Knizhnik-Zamolodchikov functor, and their number for each
$c=-r/m$ ($2\le m\le n$, $0<r<m$, $(r,m)=1$) is equal to the
number of non-$m$-regular partitions of $n$. 
This confirms a conjecture of A. Okounkov and
the first author. Also, this result implies that 
the spherical Cherednik algebra $A_c(S_n)$ is simple 
if $-1<c<0$, and allows us to strengthen the main result of
\cite{BFG} about localization functors for Cherednik algebras 
in positive characteristic. 

At the end of the paper we include an appendix by the second
author, in which the techniques of this paper are applied to 
the study of reducibility of the polynomial representation 
of the trigonometric Cherednik algebra. 

{\bf Remark.} We note that the analogs of our parabolic 
induction and restriction functors in the representation theory
of semisimple Lie algebras are the translation functors between
the regular and singular category ${\mathcal O}$; see \cite{MS}. 

{\bf Acknowledgements.} The authors thank M. Geck, V. Ginzburg
I. Gordon, and R. Rouquier for useful discussions. In particular,
they are grateful to I.Gordon and R. Rouquier for reading 
preliminary versions of the paper and making comments,  
and to M. Geck and R. Rouquier for explanations regarding blocks of defect 
1 for Hecke algebras. 
The work of P.E. was  partially supported by the NSF grant 
DMS-0504847. The work of R.B. was supported by the DAPRA grant 
HR0011-04-1-0031 and the NSF grant DMS-0625234.

\section{Rational Cherednik algebras}

\subsection{Definition of rational Cherednik algebras} 
 
Let $\h$ be a finite dimensional vector space over $\CC$, and
$W\subset GL(\h)$ a finite subgroup. 
A reflection in $W$ is an element $s\ne 1$ such that ${\rm
rk}(s-1)=1$. Denote by $S$ the set of reflections in $W$. 
Let $c: S\to \CC$ be a $W$-invariant function. 
For $s\in S$, let $\alpha_s\in \h^*$ be a generator of ${\rm
Im}(s|_{\h^*}-1)$, and $\alpha_s^\vee\in \h$ be the generator of ${\rm
Im}(s|_{\h}-1)$, such that $(\alpha_s,\alpha_s^\vee)=2$. 

\begin{definition} (see e.g. \cite{EG,E1})
The rational Cherednik algebra $H_c(W,\h)$ is the  
quotient of the algebra $\CC W\ltimes T(\h\oplus \h^*)$  
by the ideal generated by the relations
$$
[x,x']=0,\ [y,y']=0,\ [y,x]=(y,x)-\sum_{s\in S}
c_s(y,\alpha_s)(x,\alpha_s^\vee)s,
$$
$x,x'\in \h^*$, $y,y'\in \h$. 
\end{definition}

\vskip .05in 

{\bf Remark.}
In \cite{EG,E1}, rational Cherednik algebras are
defined for (complex) reflection groups $W$, but this assumption plays no
essential role in the theory, and the same definition can be used
for any finite group. In fact, this is a rather trivial
generalization, since any $W$ acting on $\h$ contains a canonical
normal subgroup $W_{\rm ref}$ generated by the complex
reflections in $W$, and one has $H_c(W,\h)=\Bbb C[W]\otimes_{\Bbb
C[W_{\rm ref}]}H_c(W_{\rm ref},\h)$, with natural multiplication. 

\vskip .05in

An important role in the representation theory of rational
Cherednik algebras is played by the element 
\begin{equation}\label{hform}
\bold h=\sum_i x_iy_i+\frac{\dim
\h}{2}-\sum_{s\in S}\frac{2c_s}{1-\lambda_s}s,
\end{equation}
where $y_i$ is a basis of $\h$, $x_i$ the dual basis of $\h^*$,
and $\lambda_s$ is the nontrivial eigenvalue of $s$ in $\h^*$. 
Its usefulness comes from the fact that it satisfies the
identities
\begin{equation}\label{sca}
[\bold h,x_i]=x_i, [\bold h,y_i]=-y_i.
\end{equation}

\subsection{A geometric approach to rational Cherednik algebras}\label{chera}

In \cite{E2}, a geometric point of view on rational Cherednik algebras
is suggested, in the spirit of the theory of D-modules; this
point of view will be useful in the present paper. 
Namely, in \cite{E2}, the algebra $H_c(W,\h)$ is
sheafified over $\h/W$ (as a usual $\mathcal{O}_{\h/W}$-module). This yields a
quasicoherent sheaf of algebras, $H_{c,W,\h}$, such that for any
affine open subset
$U\subset \h/W$, the algebra of sections $H_{c,W,H}(U)$ is 
$\CC[U]\otimes_{\CC[\h]^W}H_c(W,\h)$. 

One of the main ideas of \cite{E2} (see \cite{E2}, Section 2.9) is that the same sheaf can be 
defined more geometrically as follows. Let $\tilde U$ be the preimage of
$U$ in $\h$. Then the algebra $H_{c,W,\h}(U)$ is the algebra of
linear operators on ${\mathcal O}(\tilde U)$ generated by 
${\mathcal O}(\tilde U)$, the group $W$, and Dunkl-Opdam
operators
$$
\partial_a+\sum_{s\in S}\frac{2c_s}{1-\lambda_s}
\frac{\alpha_s(a)}{\alpha_s}(s-1),
$$
where $a\in \h$. 

\subsection{The category ${\mathcal O}_c(W,\h)$}

The algebra $H_c(W,\h)$ contains commutative subalgebras 
$\CC[\h]$ and $\CC[\h^*]$. We define the category ${\mathcal O}_c(W,\h)$  
to be the category of $H_c(W,\h)$-modules
which are finitely generated over $\CC[\h]=S\h^*$ and 
locally finite under the action of $\h$. 
We have a decomposition 
$$
{\mathcal O}_c(W,\h)=\oplus_{\lambda\in \h^*/W}{\mathcal
O}_c(W,\h)_\lambda,
$$
where ${\mathcal O}_c(W,\h)_\lambda$ is the full subcategory 
of those objects of ${\mathcal O}_c(W,\h)$ on which 
the algebra $\CC[\h^*]^W$ acts with generalized eigenvalue
$\lambda$. For convenience, below we will use the notation 
${\mathcal O}_c(W,\h)_\lambda$ for $\lambda\in \h^*$, rather than $\h^*/W$. 

We note that we have a canonical 
equivalence of categories 
\footnote{It is obvious that $H_{c_1}(W_1,\h_1)\otimes
H_{c_2}(W_2,\h_2)=H_{c_1,c_2}(W_1\times W_2,\h_1\oplus \h_2)$, and
this isomorphism defines an equivalence of categories
$$
{\mathcal O}_{c_1,c_2}(W_1\times W_2,\h_1\oplus
\h_2)_{\lambda_1,\lambda_2}\to 
{\mathcal O}_{c_1}(W_1,\h_1)_{\lambda_1}\otimes 
{\mathcal O}_{c_2}(W_2,\h_2)_{\lambda_2}.
$$
In particular, if we take $W_1=W$, $W_2=1$, $\h_1=\h/\h^W$,
$\h_2=\h^W$, this equivalence specializes to the equivalence
$\zeta$. If $W$ acts trivially on $\h$, then $\zeta$ identifies 
the category of $D$-modules on $\h$ with locally nilpotent action
of $y-\lambda(y)$ with the category of vector spaces, which, upon
taking Fourier transforms, is an instance of Kashiwara's lemma.}
$\zeta: {\mathcal O}_c(W,\h)_\lambda
\to {\mathcal O}_c(W,\h/\h^W)_{\lambda}$, defined by the formula 
$$
\zeta(M)=\{v\in M: yv=\lambda(y)v,\ y\in \h^W\}.
$$ 
This implies that the category ${\mathcal O}_c(W,\h)_\lambda$
depends only on the restriction of $\lambda$ to 
the $W$-invariant complement of $\h^W$ in $\h$. 

The most interesting case is $\lambda=0$. 
The category ${\mathcal O}_c(W,\h)_0$ is  
the category of $H_c(W,\h)$-modules
which are finitely generated under $\CC[\h]$ and 
locally nilpotent under the action of $\h$.
This is what is usually called category ${\mathcal O}$;
it is discussed in detail in \cite{GGOR}.  
It is easy to see using equation (\ref{sca}) that
the element $\bold h$ acts locally finitely in any $M\in
{\mathcal O}_c(W,\h)_0$, with finite dimensional 
generalized eigenspaces, and real parts of eigenvalues bounded 
below. 

The most important objects in the category ${\mathcal
O}_c(W,\h)_0$ are the standard modules
$M_c(W,\h,\tau)={\rm Ind}^{H_c(W,\h)}_{W\otimes \CC[\h^*]}\tau$,
where $\tau$ is an irreducible representation 
of $W$ with the zero action of $\h$, 
and their irreducible quotients $L_c(W,\h,\tau)$. 

It is easy to show that the category ${\mathcal O}_c(W,\h)_0$
contains all finite dimensional $H_c(W,\h)$-modules. 

\begin{remark}
We note that the category ${\mathcal O}_c(W,\h)_0$
is analogous to category ${\mathcal O}$ for semisimple Lie
algebras, while the category ${\mathcal O}_c(W,\h)_\lambda$
is analogous to the category of Whittaker modules. 
\end{remark}

\subsection{Completion of rational Cherednik algebras at zero 
and Jacquet functors} 

Jacquet functors for rational Cherednik algebras 
were defined by Ginzburg, \cite{Gi}. Let us recall 
their construction. 

For any $b\in \h$ we can define the completion
$\widehat{H_c}(W,\h)_b$ to be the algebra of sections  
of the sheaf $H_{c,W,\h}$ on the formal neighborhood of the image
of $b$ in $\h/W$. Namely, $\widehat{H_c}(W,\h)_b$ 
is generated by regular functions on the formal neighborhood of
the $W$-orbit of $b$, the group $W$ and Dunkl-Opdam operators.  

The algebra $\widehat{H_c}(W,\h)_b$ inherits from $H_c(W,\h)$ the natural
filtration $F^\bullet$ by order of differential operators,
and each of the spaces $F^n\widehat{H_c}(W,\h)_b$ has a projective limit
topology; the whole algebra is then equipped with the topology of
the nested union (or inductive limit).  

Consider the completion of the rational Cherednik algebra at
zero, $\widehat{H_c}(W,\h)_0$. It naturally contains the algebra
$\CC[[\h]]$. Define the category $\widehat{\mathcal O}_c(W,\h)$
of representations of $\widehat{H_c}(W,\h)_0$ which are 
finitely generated over $\CC[[\h]]$. 

We have a completion functor $\widehat{}: {\mathcal O}_c(W,\h)
\to\widehat{\mathcal O}_c(W,\h)$, defined by 
$$
\widehat{M}=\widehat{H_c}(W,\h)_0\otimes_{H_c(W,\h)}M=
\CC[[\h]]\otimes_{\CC[\h]}M.
$$ 

Also, for $N\in \widehat{\mathcal O}_c(W,\h)$, let $E(N)$ be the space
spanned by generalized eigenvectors of $\bold h$ in $N$. Then
it is easy to see that $E(N)\in \mathcal O_c(W,\h)_0$. 

The following theorem is standard in the 
theory of Jacquet functors. 

\begin{theorem}\label{equi} 
The restriction of the completion functor $\widehat{}$ 
to ${\mathcal O}_c(W,\h)_0$ is an equivalence 
of categories ${\mathcal O}_c(W,\h)_0\to \widehat{\mathcal
O}_c(W,\h)$. The inverse equivalence is given by the functor
$E$. 
\end{theorem}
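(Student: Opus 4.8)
The plan is to verify that the natural maps $M\to E(\widehat M)$, for $M\in{\mathcal O}_c(W,\h)_0$, and $\widehat{E(N)}\to N$, for $N\in\widehat{\mathcal O}_c(W,\h)$, are isomorphisms; this is exactly the statement that the completion functor and $E$ are mutually inverse. The whole argument rests on a spectral analysis of the operator $\mathbf h$ on both categories, so I would begin there.

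Let $\mathfrak m\subset\CC[[\h]]$ be the maximal ideal. Since $[\mathbf h,x_i]=x_i$ by \eqref{sca}, the element $\mathbf h\in\widehat{H_c}(W,\h)_0$ preserves $\mathfrak m^n$, hence acts on every finite-dimensional quotient $N/\mathfrak m^nN$ for $N\in\widehat{\mathcal O}_c(W,\h)$, compatibly with the transition maps. On the subquotients $\mathfrak m^jN/\mathfrak m^{j+1}N$, each a quotient of $(\mathfrak m^j/\mathfrak m^{j+1})\otimes(N/\mathfrak m N)$, the operator $\mathbf h$ acts as $j$ plus its action on $N/\mathfrak m N$; hence the generalized eigenvalues of $\mathbf h$ on all the $N/\mathfrak m^nN$ lie in a set of the form $\{a_i+j:\ j\in\ZZ_{\ge0}\}$, with real parts bounded below and only finitely many imaginary parts. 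Passing to the inverse limit one concludes that $N=\prod_a N[a]$, where $N[a]\subset N$ is the finite-dimensional generalized $\mathbf h$-eigenspace with eigenvalue $a$, and hence $E(N)=\bigoplus_aN[a]$. I would record two consequences: $E$ is an exact functor $\widehat{\mathcal O}_c(W,\h)\to{\mathcal O}_c(W,\h)_0$ (left exactness is immediate; for right exactness one lifts a generalized eigenvector of a quotient to the corresponding eigenspace of the total module, which surjects onto it), and $E(N)=0$ forces $N=0$.

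For the map $M\to E(\widehat M)$, fix $M\in{\mathcal O}_c(W,\h)_0$ and choose a finite-dimensional $W$- and $\mathbf h$-stable subspace $V\subset M$ generating $M$ over $\CC[\h]$; this exists because, by the properties of $\mathbf h$ recalled before the theorem, $M$ is generated by the span of its generalized $\mathbf h$-eigenspaces of bounded real part, which is finite-dimensional. Then $\widehat M$ is an $\mathbf h$-equivariant quotient of $\CC[[\h]]\otimes V=\prod_{j\ge0}(\CC[\h]_j\otimes V)$, where $\CC[\h]_j$ is the degree-$j$ part of $\CC[\h]$, and on the $j$-th factor $\mathbf h$ acts with eigenvalues shifted up by $j$; hence $E(\CC[[\h]]\otimes V)=\bigoplus_j\CC[\h]_j\otimes V=\CC[\h]\otimes V$, and chasing the surjection shows that $E(\widehat M)$ is the image of $\CC[\h]\otimes V$, i.e. equals $M$. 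The map $M\to\widehat M$ is injective because $\mathfrak m^nM\subseteq\bigoplus_{\mathrm{Re}(a)\ge\beta_0+n}M[a]$ for a lower bound $\beta_0$ on the real parts of eigenvalues, so $\bigcap_n\mathfrak m^nM=0$. This gives the first natural isomorphism.

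For the map $\eta\colon\widehat{E(N)}\to N$, fix $N\in\widehat{\mathcal O}_c(W,\h)$. Since $E(N)\in{\mathcal O}_c(W,\h)_0$ it is finitely generated over $\CC[\h]$, so $\widehat{E(N)}=\CC[[\h]]\otimes_{\CC[\h]}E(N)$ is finitely generated, hence $\mathfrak m$-adically complete, over $\CC[[\h]]$; therefore $\eta$ has closed image, which is also dense because $E(N)=\bigoplus_aN[a]$ surjects onto every $N/\mathfrak m^nN=\bigoplus_a(N/\mathfrak m^nN)[a]$ (the generalized $\mathbf h$-eigenspace in $N$ surjects onto the one in each quotient). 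Hence $\eta$ is surjective; set $K=\ker\eta\in\widehat{\mathcal O}_c(W,\h)$ and apply the exact functor $E$ to $0\to K\to\widehat{E(N)}\to N\to0$. Using the isomorphism of the previous paragraph for the object $E(N)$, the map $E(\eta)$ is identified, up to that isomorphism, with the identity of $E(N)$, so $E(\eta)$ is an isomorphism; hence $E(K)=0$, hence $K=0$, so $\eta$ is an isomorphism. Together with the previous paragraph this proves the equivalence. I expect the genuine obstacle to be the spectral statement of the second paragraph, that $\mathbf h$ is locally finite in the completed sense on $\widehat{\mathcal O}_c(W,\h)$ with finite-dimensional generalized eigenspaces and real parts of eigenvalues bounded below; once that is available on both sides, matching the two pictures through the completion functor and $E$ is essentially formal.
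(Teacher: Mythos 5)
Your proof is correct and rests on the same spectral analysis of $\bold h$ with respect to the $\mathfrak m$-adic filtration that drives the paper's argument, but it is organized differently and is rather more careful. The paper shows $M=E(\widehat M)$ by writing $v=\sum_i f_i m_i$ and taking the degree $(\mu-\mu_i)$ components, and shows $\widehat{E(N)}=N$ only by checking density (surjectivity of $E(N)\to N/\mathfrak m^j N$ via the stabilization of the eigenspaces $N_{j,\beta}$), leaving the injectivity of $\widehat{E(N)}\to N$ implicit. You instead first establish the topological decomposition $N=\prod_a N[a]$ as a standing lemma, deduce from it that $E$ is exact and conservative, and then use conservativity (``$E(\eta)$ is an isomorphism, hence $\ker\eta=0$'') to settle the injectivity that the paper glosses over; your treatment of $M=E(\widehat M)$ by chasing the surjection $\CC[[\h]]\otimes V\to\widehat M$ is a mild repackaging of the same computation. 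What your route buys is a cleaner separation of concerns and, as a reusable by-product, the exactness of $E$ (which the paper later effectively needs and re-derives by other means, e.g.\ in Proposition \ref{dua}). The one spot you should tighten is the ``chasing the surjection'' step in paragraph two: the surjectivity of the induced map on generalized $\bold h$-eigenspaces is not automatic for an arbitrary surjection of $\bold h$-modules, and it deserves a sentence noting that it follows from the topological decompositions of both source and target (exactly the argument you give for right exactness of $E$).
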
 

\begin{proof} The proof is standard, but we give it for reader's
convenience. 

It is clear that $M\subset \widehat{M}$, 
so $M\subset E(\widehat{M})$ (as $M$ is spanned by generalized 
eigenvectors of $\bold h$). Let us demonstrate the opposite
inclusion. Pick generators $m_1,...,m_r$ of $M$ which are
generalized eigenvectors of $\bold h$ with eigenvalues 
$\mu_1,...,\mu_r$. Let $0\ne v\in E(\widehat{M})$.
Then $v=\sum_i f_im_i$, where $f_i\in \Bbb C[[\h]]$. 
Assume that $(\bold h-\mu)^Nv=0$ for some $N$. Then 
$v=\sum_i f_i^{(\mu-\mu_i)}m_i$, where for $f\in \Bbb C[[\h]]$ we
denote by $f^{(d)}$ the degree $d$ part of $f$. 
Thus $v\in M$, so $M=E(\widehat{M})$. 

It remains to show that $\widehat{E(N)}=N$, i.e. that 
$N$ is the closure of $E(N)$. In other words, 
letting $\mathfrak{m}$ denote the maximal ideal 
in $\Bbb C[[\h]]$, we need to show that the natural map 
$E(N)\to N/\mathfrak{m}^jN$ is surjective for every $j$. 

To do so, note that $\bold h$ preserves the descending filtration of $N$
by subspaces $\mathfrak{m}^jN$. On the other hand, the successive
quotients of these subspaces, $\mathfrak{m}^jN/\mathfrak{m}^{j+1}N$,
are finite dimensional, which implies that $\bold h$ acts locally
finitely on their direct sum ${\rm gr}N$, and moreover each
generalized eigenspace is finite dimensional. Now for each 
$\beta\in \Bbb C$ denote by $N_{j,\beta}$ the generalized
$\beta$-eigenspace of $\bold h$ in $N/{\mathfrak m}^jN$. 
We have surjective homomorphisms $N_{j+1,\beta}\to N_{j,\beta}$, 
and for large enough $j$ they are isomorphisms. This implies that
the map $E(N)\to N/\mathfrak{m}^jN$ is surjective for every $j$,
as desired. 
\end{proof} 
 
\vskip .05in 

{\bf Example.} Suppose that $c=0$. Then Theorem \ref{equi} 
specializes to the well known fact that the category of 
$W$-equivariant local systems on $\h$ with a locally nilpotent
action of partial differentiations is equivalent to the category 
of all $W$-equivariant local systems on the formal neighborhood
of zero in $\h$. In fact, both categories in this case are
equivalent to the category of finite dimensional representations
of $W$.  

\vskip .05in

We can now define the composition functor
${\mathcal J}: {\mathcal O}_c(W,\h)\to {\mathcal O}_c(W,\h)_0$, 
by the formula ${\mathcal J}(M)=E(\widehat{M})$.  
The functor ${\mathcal J}$ is called the Jacquet functor
(\cite{Gi}). 

\subsection{Generalized Jacquet functors} 

\begin{proposition}\label{nilpo} For any $M\in \widehat{\mathcal
O}_c(W,\h)$, a vector $v\in M$ is ${\bold h}$-finite 
if and only if it is $\h$-nilpotent.
\end{proposition}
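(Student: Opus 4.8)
The plan is to prove both implications directly, exploiting the scaling identities \eqref{sca}: $[\bold h, x_i] = x_i$ and $[\bold h, y_i] = -y_i$. The key structural fact I want to use is that in the completed algebra $\widehat{H_c}(W,\h)_0$, the element $\bold h$ together with the commutative subalgebra $\CC[[\h]]$ and the elements $y \in \h$ generate everything, and that $\bold h$ acts on $\CC[[\h]]$ by a derivation whose action on the degree-$d$ homogeneous part is multiplication by $d$ (this follows since $\bold h = \sum_i x_i y_i + \text{const}$ acts on $\CC[\h]$ as the Euler vector field plus a scalar, and this extends to the completion). So $\bold h$ "counts degree" on functions, lowers degree by one on the $y_i$, and raises it by one on the $x_i$.

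For the implication ``$\h$-nilpotent $\Rightarrow$ $\bold h$-finite'': if $v$ is $\h$-nilpotent, then the subspace $U$ generated by $v$ under repeated application of elements of $\h$ is finite dimensional and $\h$-stable. Since $[\bold h, y] = -y$ for $y \in \h$, the space $U$ is in fact stable under $\bold h$ as well once we also throw in the $\CC[[\h]]$-action — more carefully, I would argue that the $\widehat{H_c}(W,\h)_0$-submodule generated by $v$ is finitely generated over $\CC[[\h]]$ (it lies in $M$, which is by hypothesis finitely generated over $\CC[[\h]]$), and then run the filtration argument from the proof of Theorem~\ref{equi}: $\bold h$ preserves the $\mathfrak m^j$-adic filtration, the associated graded pieces are finite dimensional, hence $\bold h$ acts locally finitely on $\mathrm{gr}$, and a Hensel-type lifting shows $\bold h$ acts locally finitely on the submodule itself, so $v$ is $\bold h$-finite. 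Actually the cleanest route is: the cyclic submodule $\widehat{H_c}(W,\h)_0 v$ is an object of $\widehat{\mathcal O}_c(W,\h)$, so by Theorem~\ref{equi} it equals $\widehat{E(N')}$ for $N' = E(\widehat{H_c}(W,\h)_0 v) \in \mathcal O_c(W,\h)_0$, on which $\bold h$ acts locally finitely; chasing $v$ through this identification gives the claim.

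For the converse ``$\bold h$-finite $\Rightarrow$ $\h$-nilpotent'': suppose $(\bold h - \mu)^N v = 0$. Write $v$ in terms of the $\CC[[\h]]$-module generators $m_1, \dots, m_r$ of $M$, which we may take to be $\bold h$-eigenvectors with eigenvalues $\mu_i$ (this is legitimate because $M \in \widehat{\mathcal O}$, so $\mathrm{gr}\, M$ is $\bold h$-locally finite and we can lift eigenvectors — or simply because $M = \widehat{E(\widehat M)}$... but we are not assuming that yet, so I'll take the generators from the finite generation over $\CC[[\h]]$ and use the $\mathfrak m$-adic argument of Theorem~\ref{equi} to see they can be chosen $\bold h$-finite). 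Then $v = \sum_i f_i m_i$ with $f_i \in \CC[[\h]]$, and the $\bold h$-finiteness forces each $f_i$ to be a \emph{polynomial} — indeed, the component of $v$ of ``$\bold h$-weight'' $\mu_i + d$ is $f_i^{(d)} m_i$, and $\bold h$-finiteness with generalized eigenvalue $\mu$ kills all but finitely many weights, so $f_i^{(d)} = 0$ for $d \gg 0$. Hence $v \in \CC[\h]\cdot\{m_1,\dots,m_r\}$, a finitely generated $\CC[\h]$-module on which $\h$ acts locally nilpotently (the Dunkl–Opdam operators drop polynomial degree by one modulo the group action, which is why $\h$ is locally nilpotent on the polynomial part); therefore $v$ is $\h$-nilpotent.

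The main obstacle, and the point requiring the most care, is justifying that the $\CC[[\h]]$-generators of $M$ can be taken $\bold h$-finite (equivalently, that $\bold h$ acts ``locally finitely up to completion'' on all of $M$, not just on objects already known to come from $\mathcal O_c(W,\h)_0$) — this is exactly the content of the surjectivity-on-$\mathfrak m$-adic-quotients argument in Theorem~\ref{equi}, and I expect to invoke that theorem, or rather its proof technique, as the crux. Once that lifting is in hand, both directions are bookkeeping with the degree grading induced by $\bold h$.
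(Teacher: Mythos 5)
Your proposal has a genuine gap in the ``$\h$-nilpotent $\Rightarrow$ $\bold h$-finite'' direction, and that gap is not patchable by the routes you sketch. Both of your arguments try to conclude that $\bold h$ acts locally finitely on the cyclic submodule $\widehat{H_c}(W,\h)_0 v$, or on its completion $\widehat{E(N')}$, but this is simply false: on a formal completion, $\bold h$ acts locally finitely only on a dense proper subspace (already for $W=1$, $c=0$, the Euler operator is locally finite on $\CC[\h]\subset\CC[[\h]]$ but not on $\CC[[\h]]$). The filtration argument from Theorem~\ref{equi} proves surjectivity of $E(N)\to N/\mathfrak m^j N$, i.e.\ density of the $\bold h$-finite part --- it does not, and cannot, show $\bold h$ is locally finite on all of $N$. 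Your ``cleanest route'' is even more problematic: it never uses the $\h$-nilpotency of $v$ at all (it would apply to any $v\in M$, which would make $E$ the identity functor, absurd), and knowing $v\in\widehat{E(N')}$ says nothing about $v$ being $\bold h$-finite without an additional argument that the $\CC[[\h]]$-coefficients in the expansion are polynomial. What is actually needed here --- and is the key idea of the paper's proof --- is to use the nilpotency \emph{structurally}: induct on $\dim S\h\cdot v$, observe that this space contains a nonzero $\h$-singular vector $u$ (i.e.\ $yu=0$ for all $y\in\h$), note that formula (\ref{hform}) shows $\bold h$ acts on the space of such singular vectors by an element of the finite-dimensional algebra $\CC[W]$ (hence locally finitely), and pass to the quotient by the submodule generated by the singular vectors to decrease $\dim S\h\cdot v$. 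That reduction is the crux, and it is absent from your proposal.

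For the converse direction your argument is also shakier than the paper's. You reduce to showing $v\in\CC[\h]\cdot\{m_i\}$ and then assert that $\h$ acts locally nilpotently there ``because Dunkl--Opdam operators drop polynomial degree''; but the module is not polynomial, and you have not shown the $m_i$ themselves are $\h$-nilpotent --- that is exactly the statement being proved, so as written this step is circular. The paper's argument for this direction is a one-liner that avoids all of this: since $[\bold h,y]=-y$, any $u\in S^r\h\cdot v$ satisfies $(\bold h-\mu+r)^N u=0$; by Theorem~\ref{equi} the real parts of generalized $\bold h$-eigenvalues in $M$ are bounded below, so $S^r\h\cdot v=0$ for $r\gg 0$. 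I'd recommend adopting that argument directly rather than reducing to generators.
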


\begin{proof} The ``if'' part actually holds for any
$H_c(W,\h)$-module $M$. Namely, if $v$ is $\h$-nilpotent then 
consider the finite dimensional space $S\h\cdot v$. 
We prove that $v$ is ${\bold h}$-finite by
induction in the dimension $d$ of this space. 
We can use $d=0$ as the base, so we only need to do the induction
step. The space $S\h\cdot v$ must contain a nonzero vector $u$ such that
$yu=0$ for all $y\in \h$. Let $U\subset M$ be the
subspace of vectors with this property. Formula (\ref{hform})
for ${\bold h}$ implies that ${\bold h}$ acts in $U$ by an element of the group
algebra of $W$, hence locally finitely. So it is sufficient 
to prove that the image of $v$ in $M/<U>$ is ${\bold h}$-finite
(where $<U>$ is the submodule generated by $U$). But this is true
by the induction assumption, as $u=0$ in $M/<U>$. 

To prove the ``only if'' part, assume that $({\bold
h}-\mu)^Nv=0$. Then for any $u\in S^r\h\cdot v$, we
have $({\bold h}-\mu+r)^Nv=0$. But by Theorem \ref{equi},
the real parts of generalized eigenvalues of ${\bold h}$ in $M$
are bounded below. Hence $S^r\h\cdot v=0$ for large enough
$r$, as desired. 
\end{proof} 

According to Proposition \ref{nilpo}, the functor $E$
can be alternatively defined by setting $E(M)$ to be the subspace of
$M$ which is locally nilpotent under the action of $\h$. 

This gives rise to the following generalization of $E$: 
for any $\lambda\in \h^*$ we define the functor 
$E_\lambda: \widehat{\mathcal O}_c(W,\h)\to {\mathcal
O}_c(W,\h)_\lambda$ by setting $E_\lambda(M)$ to be the space of 
generalized eigenvectors of $\CC[\h^*]^W$ in $M$ with eigenvalue
$\lambda$. This way, we have $E_0=E$. 

We can also define the generalized Jacquet functor 
${\mathcal J}_\lambda: {\mathcal O}_c(W,\h)\to {\mathcal
O}_c(W,\h)_\lambda$ by the formula 
${\mathcal J}_\lambda(M)=E_\lambda({\widehat M})$. 
Then we have ${\mathcal J}_0={\mathcal J}$, and the restriction
of ${\mathcal J}_\lambda$ to ${\mathcal
O}_c(W,\h)_\lambda$ is the identity functor. 

\subsection{The duality functors}\label{duali}  

Let $\bc\in \CC[S]^W$ be defined by $\bc(s)=c(s^{-1})$. Then 
we have a natural isomorphism $\gamma: H_\bc(W,\h^*)^{op}\to
H_c(W,\h)$, 
acting trivially on $\h$ and $\h^*$, and 
sending $w\in W$ to $w^{-1}$ (\cite{GGOR}, 4.2). Thus, if $M$ is an
$H_c(W,\h)$-module, then the full dual space $M^*$ 
is naturally an $H_\bc(W,\h^*)$-module, via 
$\pi_{M^*}(a)=\pi_M(\gamma(a))^*$. 

It is clear that the duality functor $*$ defines an equivalence 
between the category ${\mathcal O}_c(W,\h)_0$ and 
$\widehat{\mathcal O}_{\bc}(W,\h^*)^{op}$, and that 
we can define the functor of 
restricted dual $\dagger: {\mathcal O}_c(W,\h)\to 
{\mathcal O}_{\bc}(W,\h^*)^{op}$, given by the formula 
$M^\dagger=E(M^*)$. This functor assigns to $M$ its restricted dual space
under the grading by generalized eigenvalues of $\bold h$. 
It is clear that this functor is 
an equivalence of categories, and $\dagger^2={\rm id}$.

\section{Parabolic induction and restriction functors} 

\subsection{Parabolic subgroups} 

For a point $a$ of $\h$ or $\h^*$, let $W_a$ denote the
stabilizer of $a$ in $W$.
Define a parabolic subgroup of $W$ to be
the stabilizer $W_b$ of a point $b\in \h$.
The set of conjugacy classes
of parabolic subgroups in $W$ 
will be denoted by ${\rm Par}(W)$.

Suppose $W'\subset W$ is a parabolic subgroup, and $b\in \h$ is
such that $W_b=W'$. In this case, we have a natural
$W'$-invariant decomposition
$$
\h=\h^{W'}\oplus (\h^{*W'})^\perp,
$$ and $b\in \h^{W'}$. 
Thus we have a nonempty open set $\h^{W'}_{\reg}$ of all 
$a\in \h^{W'}$ for which $W_a=W'$; this set is nonempty because
it contains $b$. We also have a $W'$-invariant decomposition 
$\h^*=\h^{*W'}\oplus (\h^{W'})^\perp$, and we can define 
the open set $\h^{*W'}_{\reg}$ of all 
$\lambda\in \h^{W'}$ for which $W_\lambda=W'$. 
It is clear that this set is nonempty. 
This implies, in particular, that one can make an alternative
definition of a parabolic subgroup of $W$ as the stabilizer of a
point in $\h^*$. 

\subsection{The centralizer construction}

For a finite group $H$, let $e_H=\frac{1}{|H|}\sum_{h\in H}h$ 
be the idempotent of the
trivial representation in $\Bbb C[H]$. 

If $G\supset H$ are finite groups, 
and $A$ is an algebra containing $\Bbb C[H]$, 
then define the algebra 
$Z(G,H,A)$ to be the centralizer $\End_A(P)$ of $A$ in the right $A$-module
$P={\rm Fun}_H(G,A)$ of $H$-invariant $A$-valued functions on $G$,
i.e. such functions $f: G\to A$ that $f(hg)=hf(g)$. Clearly, 
$P$ is a free $A$-module of rank $|G/H|$, so the algebra 
$Z(G,H,A)$ is isomorphic to ${\rm Mat}_{|G/H|}(A)$, but this
isomorphism is not canonical. 

The following lemma is trivial. 

\begin{lemma}\label{le}
(i) The functor $N\mapsto I(N):=P\otimes_A N={\rm Fun}_H(G,N)$ defines 
an equivalence of categories 
$A-{\rm mod}\to Z(G,H,A)-{\rm mod}$. 
 
(ii) $e_GZ(G,H,A)e_G=e_HAe_H$. 

(iii) $Z(G,H,A)e_GZ(G,H,A)=Z(G,H,A)$ if and only if $Ae_HA=A$. 
\end{lemma}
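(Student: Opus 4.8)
The plan is to prove (i) by Morita theory, deduce (ii) by identifying the image of the idempotent $e_G$, and deduce (iii) from (i) together with the same identification. The recurring point is that a $G$-invariant element of $P$ — or of $I(N)$ — is a constant function on $G$ whose value at $1$ is automatically fixed by $H$, hence lies in $e_HA$ (resp. $e_HN$).

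For (i), first I would pick coset representatives $g_1,\dots,g_n$ of $H\backslash G$, with $n=|G/H|$, and note that evaluation $f\mapsto(f(g_1),\dots,f(g_n))$ identifies $P={\rm Fun}_H(G,A)$ with $A^n$ as a right $A$-module; in particular $P$ is a progenerator in the category of right $A$-modules. By Morita theory, $N\mapsto P\otimes_A N$ is then an equivalence $A\text{-mod}\to\End_A(P)\text{-mod}=Z(G,H,A)\text{-mod}$ (equivalently, $Z(G,H,A)\cong{\rm Mat}_n(A)$ and one invokes the standard equivalence between a matrix ring and its coefficient ring). To finish, the natural map $P\otimes_A N\to{\rm Fun}_H(G,N)$ given by $f\otimes v\mapsto(g\mapsto f(g)v)$ is an isomorphism, since evaluating at the $g_i$ turns it into the canonical identification $A^n\otimes_A N\cong N^n$.

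For (ii), the group algebra $\CC[G]$ acts on $P$ on the left by $(g\cdot f)(x)=f(xg)$, commuting with the right $A$-action on values, whence an algebra map $\CC[G]\to Z:=Z(G,H,A)$ and in particular $e_G\in Z$. Its image $e_GP=P^G$ is the space of functions constant on $G$, and $f\mapsto f(1)$ identifies $P^G$, as a right $A$-module, with $\{a\in A:ha=a\ \forall h\in H\}=e_HA$. I would then apply twice the elementary fact that, for a ring $R$, a right $R$-module $Q$, and an idempotent $\eps\in\End_R(Q)$, one has $\eps\,\End_R(Q)\,\eps\cong\End_R(\eps Q)$: taking $R=A$, $Q=P$, $\eps=e_G$ gives $e_GZe_G\cong\End_A(P^G)$, and taking $R=A$, $Q=A$, $\eps=e_H$ gives $e_HAe_H\cong\End_A(e_HA)$. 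Since $P^G\cong e_HA$ as right $A$-modules, the two right-hand sides coincide, so $e_GZe_G\cong e_HAe_H$.

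For (iii), I would use that a two-sided ideal of a ring $R$ generated by an idempotent $\eps$ is all of $R$ if and only if no nonzero $R$-module is killed by $\eps$ — indeed if $R\eps R=R$ and $\eps V=0$ then $V=RV=R\eps RV\subseteq R\eps V=0$, and conversely $R/R\eps R$, when nonzero, is annihilated by $\eps$. Apply this with $R=Z$, $\eps=e_G$: under the equivalence of (i), a $Z$-module $I(N)$ is killed by $e_G$ exactly when $e_GI(N)=0$, and the computation of (ii) with coefficients in $N$ shows $e_GI(N)=I(N)^G\cong e_HN$; hence $Ze_GZ=Z$ iff every nonzero $A$-module $N$ has $e_HN\ne0$. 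Applying the same criterion with $R=A$, $\eps=e_H$ shows the latter is equivalent to $Ae_HA=A$. I do not expect a genuine obstacle here — the lemma is elementary — but one should keep in mind that $A$ is an arbitrary algebra, so the Morita step and the ideal/annihilator criterion must be phrased purely ring-theoretically, and one must track left- versus right-module structures carefully in (i) and (ii).
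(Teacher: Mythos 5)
Your proof is correct. The paper simply declares this lemma ``trivial'' and gives no argument of its own, so there is nothing to compare against; your Morita-theoretic proof of (i), the identification $e_GZe_G\cong \End_A(e_GP)\cong\End_A(e_HA)\cong e_HAe_H$ for (ii), and the annihilator criterion for idempotent ideals in (iii) together form a clean and complete verification of all three parts.
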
 

\subsection{Completion of rational Cherednik algebras at
arbitrary points of $\h/W$}

The following result is, in essense, a consequence 
of the geometric approach to rational Cherednik algebras,
described in Subsection \ref{chera}. 
It should be regarded as a direct generalization to the 
case of Cherednik algebras of Theorem 8.6 of \cite{L} 
for affine Hecke algebras.

\begin{theorem}\label{comp}
Let $b\in \h$, and $c'$ be the restriction of $c$ to 
the set $S_b$ of reflections in $W_b$. Then 
one has a natural isomorphism 
$$
\theta: \widehat{H_c}(W,\h)_b\to Z(W,W_b,\widehat{H_{c'}}(W_b,\h)_0),
$$
defined by the following formulas. 
Suppose that $f\in P={\rm Fun}_{W_b}(W,\widehat{H_c}(W_b,\h)_0)$.
Then 
$$
(\theta(u)f)(w)=f(wu), u\in W;
$$
for any $\alpha\in \h^*$, 
$$
(\theta(x_\alpha)f)(w)=(x_{w\alpha}^{(b)}+(w\alpha,b))f(w), 
$$
where $x_\alpha\in \h^*\subset H_c(W,\h)$, 
$x_\alpha^{(b)}\in \h^*\subset H_{c'}(W_b,\h)$ are the elements
corresponding to $\alpha$; and for any 
$a\in \h^*$, 
\begin{equation}\label{thetay}
(\theta(y_a)f)(w)=y_{wa}^{(b)}f(w)+\sum_{s\in S: s\notin W_b}
\frac{2c_s}{1-\lambda_s}\frac{\alpha_s(wa)}{x_{\alpha_s}^{(b)}+\alpha_s(b)}
(f(sw)-f(w)).
\end{equation}
where $y_a\in \h\subset H_c(W,\h)$, 
$y_a^{(b)}\in \h\subset H_{c'}(W_b,\h)$. 
\end{theorem}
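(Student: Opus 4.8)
The plan is to verify directly that the formulas for $\theta$ define an algebra homomorphism $\widehat{H_c}(W,\h)_b\to Z(W,W_b,\widehat{H_{c'}}(W_b,\h)_0)$, and then to check it is bijective using a dimension/filtration argument. First I would observe that the geometric description of the Cherednik algebra from Subsection~\ref{chera} makes the source of $\theta$ concrete: $\widehat{H_c}(W,\h)_b$ acts on functions on the formal neighborhood of the $W$-orbit of $b$ in $\h$, via $W$, multiplication operators, and Dunkl--Opdam operators. Since the $W$-orbit of $b$ breaks into $|W/W_b|$ formal discs (one per coset $W_bw$), a function on this orbit-neighborhood is precisely a $W_b$-equivariant $\widehat{H_{c'}}(W_b,\h)_0$-valued... more precisely, the space $P = \mathrm{Fun}_{W_b}(W,\widehat{H_{c'}}(W_b,\h)_0)$ should be identified with the natural module on which both sides act, so that $\theta$ is forced once one computes how $W$, the $x_\alpha$, and the $y_a$ act on functions supported near the various points $wb$.

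The key computational steps, in order, are: (1) check $\theta$ is well-defined, i.e.\ $\theta(u)f$, $\theta(x_\alpha)f$ and $\theta(y_a)f$ lie in $P$ (the $W_b$-equivariance under $f(hg)=hf(g)$), which for $y_a$ uses that the sum in \eqref{thetay} runs over $s\notin W_b$ and that conjugation by $h\in W_b$ permutes these $s$; (2) check $\theta$ respects the defining relations of $H_c(W,\h)$: the commutativity $[x_\alpha,x_\beta]=0$ is immediate, $[y_a,y_{a'}]=0$ and the crucial relation $[y_a,x_\alpha]=(y_a,x_\alpha)-\sum_{s\in S}c_s(\ldots)s$ require expanding \eqref{thetay} and separating the reflections of $W_b$ (which contribute the commutator inside $\widehat{H_{c'}}(W_b,\h)_0$) from those not in $W_b$ (whose contributions must reassemble into the operator $\sum_{s\notin W_b}c_s(\ldots)s$ on $P$); (3) note that $\widehat{H_c}(W,\h)_b$ and $Z(W,W_b,\widehat{H_{c'}}(W_b,\h)_0)\cong \mathrm{Mat}_{|W/W_b|}(\widehat{H_{c'}}(W_b,\h)_0)$ carry compatible filtrations by order of differential operator, that $\theta$ is filtered, and that on associated graded it becomes the obvious isomorphism $\widehat{\mathcal D}(\h)_{\mathrm{orbit}} \rtimes W \xrightarrow{\sim} Z(W,W_b,\widehat{\mathcal D}(\h)_0\rtimes W_b)$ coming from decomposing the formal neighborhood of the orbit; since an isomorphism on gr implies an isomorphism, $\theta$ is an isomorphism.

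The main obstacle is step (2): verifying that the $s\notin W_b$ terms in the formula for $\theta(y_a)$ — which involve the \emph{denominators} $x_{\alpha_s}^{(b)}+\alpha_s(b)$, invertible in $\widehat{H_{c'}}(W_b,\h)_0$ precisely because $s\notin W_b$ means $\alpha_s(b)\neq 0$ — conspire so that the commutator $[\theta(y_a),\theta(x_\alpha)]$ produces exactly $-\sum_{s\in S}c_s(y_a,\alpha_s)(x_\alpha,\alpha_s^\vee)\,\theta(s)$ acting on $P$. This is a bookkeeping computation with the Leibniz rule for $y^{(b)}_{wa}$ against $x^{(b)}_{w\alpha}+(w\alpha,b)$, combined with how the difference operators $f\mapsto f(sw)-f(w)$ interact with multiplication; the resulting identity is a ``partial fractions''-type cancellation that mirrors the classical Dunkl operator computation, but bootstrapped through the centralizer algebra. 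A subsidiary point worth isolating is that $\theta$ is independent of the choices (basis of $\h$, generators $\alpha_s$), and that the displayed formulas are consistent with the $W$-action, i.e.\ $\theta(w)\theta(x_\alpha)\theta(w)^{-1}=\theta(x_{w\alpha})$ and similarly for $y_a$; these compatibilities both serve as sanity checks and are actually needed to reduce the relation-checking to a single coset.
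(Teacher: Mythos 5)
Your proposal is correct and takes essentially the same route as the paper: the paper's proof is simply ``by a direct computation'' (with a note that the fraction is to be read as a power series and that only the entire sum over $s\notin W_b$, not the individual summands, lies in the centralizer algebra), and the accompanying Remark gives precisely the geometric picture of the formal neighborhood of the $W$-orbit of $b$ that you invoke to motivate and organize the computation. Your step (2) is the computation the paper has in mind, and your filtered/associated-graded argument for bijectivity is the standard way to finish, compatible with the paper's geometric explanation.
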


\begin{proof}
The proof is by a direct computation. 
We note that in the last formula, the fraction 
$\frac{\alpha_s(wa)}{x_{\alpha_s}^{(b)}+\alpha_s(b)}$
is viewed as a power series (i.e., an element of $\CC[[\h]]$),
and that only the entire sum, and not each summand separately, 
is in the centralizer algebra.  
\end{proof} 

\vskip .05in

{\bf Remark.} Let us explain how to see the existence of $\theta$
without writing explicit formulas, and how to guess the formula 
(\ref{thetay}) for $\theta$. 
It is explained in \cite{E2} (see e.g. \cite{E2}, Section 2.9) 
that the sheaf of algebras obtained
by sheafification of $H_c(W,\h)$ over $\h/W$ is generated (on
every affine open set in $\h/W$) by regular functions on $\h$,
elements of $W$, and Dunkl-Opdam operators. Therefore, this
statement holds for formal neighborhoods, i.e., it
is true on the formal neighborhood of the image in $\h/W$ 
of any point $b\in \h$. However, looking at the formula for Dunkl-Opdam
operators near $b$, we see that the summands corresponding to
$s\in S,s\notin W_b$ are actually regular at $b$, so they can be
safely deleted without changing the generated algebra (as all regular functions
on the formal neighborhood of $b$ are included into the system of
generators). But after these terms are deleted, what remains is
nothing but the Dunkl operators for $(W_b,\h)$, which, together
with functions on the formal neighborhood of $b$ and the group
$W_b$, generate the completion of $H_c(W_b,\h)$. This gives a 
construction of $\theta$ without using explicit formulas. 

Also, this argument explains why $\theta$ should be defined by
the formula (\ref{thetay}) 
of Theorem \ref{comp}. Indeed, what this formula does
is just restores the terms with $s\notin W_b$ that have been
previously deleted.

\vskip .05in

The map $\theta$ defines an equivalence of categories
$$
\theta_{*}: \widehat{H_c}(W,\h)_b-{\rm mod}\to
Z(W,W_b,\widehat{H_{c'}}(W_b,\h)_0)-{\rm mod}.
$$

\begin{corollary}\label{whit}
We have a natural equivalence of categories 
$\psi_\lambda: {\mathcal O}_c(W,\h)_\lambda\to 
{\mathcal O}_{c'}(W_\lambda,\h/\h^{W_\lambda})_0$. 
\end{corollary}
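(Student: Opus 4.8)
The plan is to deduce the equivalence $\psi_\lambda$ from Theorem~\ref{comp} applied to a point $\lambda\in\h^*$ in place of $b\in\h$, together with the duality functor of Subsection~\ref{duali} and the decomposition functor $\zeta$. First I would observe that everything in Subsection~3.3 works equally well with $\h$ and $\h^*$ interchanged (as already noted at the end of that subsection, a parabolic subgroup can be defined as the stabilizer of a point of $\h^*$), so Theorem~\ref{comp} gives a natural isomorphism $\widehat{H_{\bc}}(W,\h^*)_\lambda\to Z(W,W_\lambda,\widehat{H_{\bc'}}(W_\lambda,\h^*)_0)$, where $\bc$ is the dual parameter and $\bc'$ its restriction to reflections in $W_\lambda$. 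By Lemma~\ref{le}(i), the right-hand side has module category equivalent to $\widehat{H_{\bc'}}(W_\lambda,\h^*)_0\mathrm{-mod}$; combined with $\theta_*$ this yields an equivalence $\widehat{H_{\bc}}(W,\h^*)_\lambda\mathrm{-mod}\to\widehat{H_{\bc'}}(W_\lambda,\h^*)_0\mathrm{-mod}$.

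Next I would transport this through the duality functor $*$, which by Subsection~\ref{duali} identifies ${\mathcal O}_c(W,\h)_0$ with $\widehat{\mathcal O}_{\bc}(W,\h^*)^{op}$, and more generally (after completing at $\lambda$ rather than at $0$) identifies the category of $H_c(W,\h)$-modules that are finitely generated over $\CC[\h]$ with $\CC[\h^*]^W$ acting with generalized eigenvalue $\lambda$ — i.e. ${\mathcal O}_c(W,\h)_\lambda$ — with the opposite of the category of $\widehat{H_{\bc}}(W,\h^*)_\lambda$-modules finitely generated over $\CC[[\h^*]]$. On the $W_\lambda$ side, the analogue of Theorem~\ref{equi} (again with $\h$ and $\h^*$ swapped, and using the functor $E$ built from the grading element $\mathbf h$) identifies $\widehat{H_{\bc'}}(W_\lambda,\h^*)_0\mathrm{-mod}$ with ${\mathcal O}_{c'}(W_\lambda,\h^*)_0^{op}$, hence, dualizing once more, with ${\mathcal O}_{\bc'}(W_\lambda,\h)_0$ — wait, more carefully: applying $\dagger$ as in Subsection~\ref{duali} returns us to ${\mathcal O}_{c'}(W_\lambda,\h)_0$ with the original (not conjugated) parameter. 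Finally I would apply $\zeta$ to replace $\h$ by $\h/\h^{W_\lambda}$, since $W_\lambda$ acts trivially on $\h^{W_\lambda}$ and $\zeta$ is an equivalence ${\mathcal O}_{c'}(W_\lambda,\h)_0\to{\mathcal O}_{c'}(W_\lambda,\h/\h^{W_\lambda})_0$. Composing all of these gives $\psi_\lambda$.

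The main obstacle I anticipate is bookkeeping rather than anything deep: one must track the parameter ($c$ versus $\bc$) through the two applications of duality and check that the net effect on the $W_\lambda$-side is the genuine restriction $c'$ of $c$, not $\bc'$; and one must verify that the grading element $\mathbf h$ used to define $E$ (and hence the finiteness conditions cutting ${\mathcal O}_c(W,\h)_\lambda$ out of $\widehat{H_c}(W,\h)_\lambda\mathrm{-mod}$) is matched correctly under $\theta$ — i.e. that $\theta$ carries the $\mathbf h$ for $(W,\h)$ to something whose $E$-functor agrees with the $E$-functor for $(W_\lambda,\h^*)$ up to a locally finite operator, so that finitely-generated-over-$\CC[[\h^*]]$ modules with the right eigenvalue correspond on the nose. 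This is the analogue of checking compatibility of the element $\bold h$ with the centralizer construction, and is where a short direct computation using \eqref{hform} and \eqref{thetay} is needed. Once these compatibilities are in place, naturality of $\psi_\lambda$ is automatic since every functor in the composition is natural.
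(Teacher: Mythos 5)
Your proposal is correct, and it is a more explicit (but also more roundabout) route than the paper's. The paper's proof is a single sentence: it observes that ${\mathcal O}_c(W,\h)_\lambda$ is precisely the category of $H_c(W,\h)$-modules finitely generated over $\CC[\h]$ that extend by continuity to the completion of $H_c(W,\h)$ at $\lambda\in\h^*$, and then applies Theorem~\ref{comp} with the completion taken along $\h^*$ at $\lambda$ rather than along $\h$ at $b$ (this is flagged in the remark that follows the proof). That gives ${\mathcal O}_c(W,\h)_\lambda\simeq{\mathcal O}_{c'}(W_\lambda,\h)_0$ directly, with the same parameter $c$ throughout and no opposite categories; composing with $\zeta$ finishes the argument. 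You instead transport everything through the duality functors $*$ and $\dagger$ and through $H_{\bc}(W,\h^*)$, invoking Theorem~\ref{comp} only in its literal $\h$-completion form and Theorem~\ref{equi} on the $W_\lambda$ side. This does work: the two applications of duality each introduce a conjugation $c\mapsto\bc$ and an $\mathrm{op}$, and as you note they cancel, landing you back at $c'$ with the correct variance. What your version buys is that it only uses Theorem~\ref{comp} as literally stated (applied to $H_{\bc}(W,\h^*)$), so the ``$\h^*$-version'' of Theorem~\ref{comp} that the paper invokes implicitly becomes an honest consequence of $\gamma$; the price is the parameter and $\mathrm{op}$ bookkeeping you flagged, which the paper simply avoids. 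Your second worry --- matching $\bold h$ and the finiteness conditions across $\theta$ --- is a real but standard check, already present implicitly in the paper's proof (in the claim that the equivalence of module categories over the completed algebras restricts to an equivalence of the ${\mathcal O}$-subcategories); it is not an obstacle specific to your route.
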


\begin{proof} The category ${\mathcal O}_c(W,\h)_\lambda$
is the category of modules over $H_c(W,\h)$ 
which are finitely generated over $\CC[\h]$ and extend by
continuity to the completion of the algebra $H_c(W,\h)$ at $\lambda$. So
it follows from Theorem \ref{comp}
that we have an equivalence ${\mathcal O}_c(W,\h)_\lambda\to 
{\mathcal O}_{c'}(W_\lambda,\h)_0$. Composing this equivalence
with the equivalence 
$\zeta: {\mathcal O}_{c'}(W_\lambda,\h)_0\to 
{\mathcal O}_{c'}(W_\lambda,\h/\h^{W_\lambda})_0$, we obtain
the desired equivalence $\psi_\lambda$.    
\end{proof} 

\begin{remark}
Note that in this proof, we take the completion of $H_c(W,\h)$ 
at a point of $\lambda\in \h^*$ rather than $b\in \h$.  
\end{remark}

\subsection{The completion functor}

Let $\widehat{\mathcal O}_c(W,\h)^b$ be the category 
of modules over $\widehat{H_c}(W,\h)_b$ 
which are finitely generated over $\widehat{\CC[\h]}_b$. 

\begin{proposition}\label{isom}
The duality functor $*$ defines an anti-equivalence of categories 
${\mathcal O}_c(W,\h)_\lambda\to \widehat{\mathcal O}_\bc(W,\h^*)^\lambda$.  
\end{proposition}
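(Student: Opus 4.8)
The plan is to reduce the statement to Theorem~\ref{equi} (and its evident variant for $\lambda\ne 0$) together with the linear-algebra observation about the $\bold h$-grading that underlies the restricted-dual construction of Subsection~\ref{duali}. Recall that by definition $M\in{\mathcal O}_c(W,\h)_\lambda$ is a finitely generated $\CC[\h]$-module on which $\CC[\h^*]^W$ acts with generalized eigenvalue $\lambda$; equivalently, it extends by continuity to a module over the completion $\widehat{H_c}(W,\h)$ at the point $\lambda\in\h^*$. Dually, $\widehat{\mathcal O}_\bc(W,\h^*)^\lambda$ is the category of modules over $\widehat{H_\bc}(W,\h^*)_\lambda$ that are finitely generated over $\widehat{\CC[\h^*]}_\lambda$, where now we complete along $\h^*$ at the point $\lambda$. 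The functor $*$ on objects sends $M$ to the full linear dual $M^*$ with the $H_\bc(W,\h^*)$-action twisted through the isomorphism $\gamma$ of Subsection~\ref{duali}.

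First I would check that $*$ indeed lands in $\widehat{\mathcal O}_\bc(W,\h^*)^\lambda$ and is an anti-equivalence onto it. The key point is the following dictionary, already implicit in the discussion of $\dagger$: for $M\in{\mathcal O}_c(W,\h)_\lambda$, the element $\bold h$ acts on $M$ locally finitely with finite-dimensional generalized eigenspaces and real parts of eigenvalues bounded below (this is the $\lambda$-analogue of the fact stated for $\lambda=0$, proved via (\ref{sca}); the twist by $\gamma$ sends $\bold h$ to $-\bold h$ up to the additive constant $\dim\h$, so on $M^*$ the generalized eigenspaces of $\bold h$ are again finite dimensional with real parts of eigenvalues bounded \emph{above}). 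Consequently $M^*=\prod_\mu (M_\mu)^*$ is the completion — with respect to the descending filtration by $\bold h$-eigenvalue — of the graded dual $M^\dagger=E(M^*)=\bigoplus_\mu(M_\mu)^*$. Under $\gamma$, the subalgebra $\CC[\h]\subset H_c(W,\h)$ becomes $\CC[\h]\subset H_\bc(W,\h^*)$, i.e. precisely $\CC[\h^{**}]=\CC[\h]$ acting as polynomials on $\h^*$, and ``$M$ finitely generated over $\CC[\h]$'' dualizes to ``$M^*$ finitely generated, as a topological module, over $\widehat{\CC[\h^*]}_0$''; similarly the condition that $\CC[\h^*]^W$ act on $M$ with generalized eigenvalue $\lambda$ dualizes to the statement that $M^*$ is a module over the completion at $\lambda$. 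A mild point is that the completion here should be taken at the $W$-orbit of $\lambda$ rather than at $\lambda$ itself, but this is harmless and is the same convention used in Corollary~\ref{whit}.

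To organize the argument cleanly I would first treat $\lambda=0$, where the statement is that $*$ restricts to an anti-equivalence ${\mathcal O}_c(W,\h)_0\to\widehat{\mathcal O}_\bc(W,\h^*)$; this is essentially the observation made right after the definition of $\dagger$ in Subsection~\ref{duali}, combined with Theorem~\ref{equi} applied to $(W,\h^*,\bc)$: the composite $M\mapsto M^*\mapsto E(M^*)=M^\dagger$ is an anti-equivalence ${\mathcal O}_c(W,\h)_0\to{\mathcal O}_\bc(W,\h^*)_0^{\op}$, and post-composing with the completion equivalence $\widehat{\phantom{M}}:{\mathcal O}_\bc(W,\h^*)_0\to\widehat{\mathcal O}_\bc(W,\h^*)$ of Theorem~\ref{equi} recovers $M\mapsto M^*=\widehat{M^\dagger}$. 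For general $\lambda$, I would either repeat this with the $\lambda$-version of Theorem~\ref{equi} (whose proof is identical, using that $\CC[\h^*]^W$ acts with generalized eigenvalue $\lambda$ to control the $\bold h$-action), or, more economically, use the equivalence $\zeta$ and Corollary~\ref{whit}/Theorem~\ref{comp} to transport the $\lambda=0$ case along $\psi_\lambda$ — noting that $*$ is compatible with the completion isomorphism $\theta$ because $\gamma$ matches the defining formulas of $\theta$ for $(W,W_b)$ with those for the dual datum.

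The main obstacle, and the only step requiring genuine care rather than bookkeeping, is verifying that the full dual $M^*$ is \emph{finitely generated} over $\widehat{\CC[\h^*]}_\lambda$ (so that it genuinely lies in $\widehat{\mathcal O}_\bc(W,\h^*)^\lambda$), and conversely that every object of $\widehat{\mathcal O}_\bc(W,\h^*)^\lambda$ arises this way — i.e. that the graded dual $E(N)$ of such an $N$ is finitely generated over $\CC[\h]$ and is the ``right size'' so that $\widehat{E(N)^*}\cong N$. Finite generation on the dual side is where one must use that the $\bold h$-generalized-eigenspaces are finite dimensional and that the filtration by $\bold h$-eigenvalue is cofinal with the $\mathfrak m$-adic filtration (the $\mathfrak m$-adic argument from the proof of Theorem~\ref{equi}); with that in hand, $M^*$ is the $\mathfrak m$-adic completion of the finitely generated module $M^\dagger$, hence finitely generated over $\widehat{\CC[\h^*]}_\lambda$, and the two functors $N\mapsto E(N)$ and $M\mapsto M^*$ are mutually inverse anti-equivalences by the same generalized-eigenspace comparison as in Theorem~\ref{equi}. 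Once this finiteness is pinned down, the naturality and the identity $*^2=\id$ are formal.
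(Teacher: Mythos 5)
There is a genuine gap. Your central claim — that for $M\in{\mathcal O}_c(W,\h)_\lambda$ with $\lambda\ne 0$ the element $\bold h$ acts locally finitely with finite-dimensional generalized eigenspaces (a ``$\lambda$-analogue'' of the $\lambda=0$ fact) — is false. The local finiteness of $\bold h$ in $\mathcal O_c(W,\h)_0$ rests on the $\h$-nilpotence of the module (see Proposition \ref{nilpo}, which shows $\bold h$-finite $\Leftrightarrow$ $\h$-nilpotent in that context). For $\lambda\ne 0$, the $y$'s act with nonzero generalized eigenvalue, and there is no $\bold h$-eigenspace decomposition. A minimal counterexample: take $W=1$, $\h=\CC$, $c=0$, so $H_c=D(\CC)$; the module $\CC[x]\,e^{\lambda x}$ with $\lambda\ne 0$ lies in $\mathcal O_c(1,\CC)_\lambda$, but $\bold h=x\partial+\frac12$ sends $x^n e^{\lambda x}\mapsto (n+\frac12)x^n e^{\lambda x}+\lambda x^{n+1}e^{\lambda x}$, which is not locally finite. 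Consequently the filtration-by-$\bold h$-eigenvalue and the dictionary ``$M^*$ is the $\bold h$-adic completion of $M^\dagger$'' that you build on simply do not exist for $\lambda\ne 0$, and the $\dagger$ functor of Subsection \ref{duali} is only defined on $\mathcal O_c(W,\h)_0$.

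The paper's proof sidesteps this entirely and is much shorter: $\mathcal O_c(W,\h)_\lambda$ is exactly the category of $H_c(W,\h)$-modules that are finitely generated over $\CC[\h]$ and extend by continuity to the completion $\widehat{H_c}(W,\h)_\lambda$ at $\lambda\in\h^*$. Taking the full linear dual and twisting by $\gamma$ immediately produces a module over $\widehat{H_\bc}(W,\h^*)_\lambda$ which is (topologically) finitely generated over $\widehat{\CC[\h^*]}_\lambda$, and this operation is an anti-equivalence onto $\widehat{\mathcal O}_\bc(W,\h^*)^\lambda$; no $\bold h$-grading is invoked. Your fallback suggestion — transporting the $\lambda=0$ case through $\psi_\lambda$, $\zeta$, and $\theta$ — is a plausible alternative in spirit, but as written it is only sketched, and it would still require checking that the target $\widehat{\mathcal O}_\bc(W,\h^*)^\lambda$ transports compatibly, which you do not verify. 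I would recommend dropping the $\bold h$-grading argument for $\lambda\ne 0$ and instead dualizing the two defining conditions (finite generation over $\CC[\h]$; continuity to the completion at $\lambda$) directly, as the paper does.
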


\begin{proof}
This follows from the fact (already mentioned above) 
that ${\mathcal O}_c(W,\h)_\lambda$ is the category of 
modules over $H_c(W,\h)$ 
which are finitely generated over $\CC[\h]$ and extend by
continuity to the completion of the algebra $H_c(W,\h)$ at
$\lambda$. 
\end{proof} 

Let us denote the functor inverse to $*$ also by $*$; it is the 
functor of continuous dual (in the formal series topology). 

We have an exact functor of completion at $b$, 
${\mathcal O}_c(W,\h)_0\to \widehat{\mathcal O}_c(W,\h)^b$,
$M\mapsto \widehat{M}_b$. We also have a functor 
$E^b: \widehat{\mathcal O}_c(W,\h)^b\to {\mathcal O}_c(W,\h)_0$
in the opposite direction, sending a module $N$ to 
the space $E^b(N)$ of $\h$-nilpotent vectors in $N$. 

\begin{proposition}\label{adj1}
The functor $E^b$ is right adjoint to the completion functor
$\widehat{}_b$. 
\end{proposition}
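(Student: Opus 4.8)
The plan is to establish the adjunction $\Hom_{\widehat{\mathcal O}_c(W,\h)^b}(\widehat{M}_b, N)\cong \Hom_{{\mathcal O}_c(W,\h)_0}(M, E^b(N))$ naturally in $M\in {\mathcal O}_c(W,\h)_0$ and $N\in \widehat{\mathcal O}_c(W,\h)^b$. First I would exhibit the two mutually inverse maps. Given a morphism $\phi: \widehat{M}_b\to N$, one has $\phi(M)\subset N$ since $M$ consists of $\h$-nilpotent vectors (being an object of ${\mathcal O}_c(W,\h)_0$) and $\phi$ commutes with the $\h$-action, so the image lands in the subspace $E^b(N)$ of $\h$-nilpotent vectors; this gives a map $\phi\mapsto \phi|_M$ into $\Hom(M, E^b(N))$. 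Conversely, given $\psi: M\to E^b(N)\subset N$, I would use the universal property of completion: since $N$ is an $\widehat{H_c}(W,\h)_b$-module, in particular a $\widehat{\CC[\h]}_b$-module, the $\CC[\h]$-linear map $\psi$ extends uniquely to a $\widehat{\CC[\h]}_b$-linear map $\widehat{M}_b=\widehat{\CC[\h]}_b\otimes_{\CC[\h]}M\to N$, and I must check this extension is $\widehat{H_c}(W,\h)_b$-linear, not merely $\widehat{\CC[\h]}_b$-linear.

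The key step is that last verification: the extension $\widehat{\psi}$ automatically commutes with the action of $W$ and of $\h$, because these generators, together with $\CC[\h]$, generate $\widehat{H_c}(W,\h)_b$ (using the geometric description of the completed algebra via functions on the formal neighborhood, the group $W$, and Dunkl--Opdam operators, as in Subsection \ref{chera} and Theorem \ref{comp}), and on the dense subspace $M\subset \widehat{M}_b$ the map $\widehat\psi$ already intertwines all of them since $\psi$ does. More precisely, for a fixed element $u$ among these generators, the set of $v\in\widehat{M}_b$ with $\widehat\psi(uv)=u\widehat\psi(v)$ is a closed $\widehat{\CC[\h]}_b$-submodule containing $M$, hence is all of $\widehat{M}_b$ by density and continuity of the module structures in the $\mathfrak m$-adic topology. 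Thus $\widehat\psi$ is a morphism in $\widehat{\mathcal O}_c(W,\h)^b$.

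Finally I would check the two constructions are inverse to each other and natural. That $\psi\mapsto\widehat\psi\mapsto\widehat\psi|_M$ recovers $\psi$ is immediate from uniqueness of the $\widehat{\CC[\h]}_b$-linear extension. For the other composite, starting from $\phi:\widehat{M}_b\to N$, restricting to $M$ and then re-extending: both $\phi$ and the re-extension of $\phi|_M$ are $\widehat{\CC[\h]}_b$-linear maps $\widehat{M}_b\to N$ agreeing on $M$, so they coincide by uniqueness. Naturality in $M$ and $N$ follows by inspection, since every map involved is built from restriction and scalar extension, both of which are functorial. The main obstacle is the continuity/density argument in the second paragraph: one must be careful that $N$, though only required to be finitely generated over $\widehat{\CC[\h]}_b$, carries a separated and complete $\mathfrak m$-adic topology compatible with the $\widehat{H_c}(W,\h)_b$-action, so that "closed submodule containing a dense subset equals everything" is legitimate; this is where the finite generation hypothesis on objects of $\widehat{\mathcal O}_c(W,\h)^b$ is used, together with the fact that the $\h$- and $W$-actions are continuous for this topology (the Dunkl--Opdam operators raise the $\mathfrak m$-adic filtration by at most a bounded amount, by \ref{thetay}).
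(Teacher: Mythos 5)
Your proof is correct, and since the paper's own proof is literally the single word ``Straightforward,'' there is no alternative argument in the paper to compare against; the unit/counit description you give (restriction along the natural map $M\to\widehat{M}_b$, and unique continuous $\widehat{\CC[\h]}_b$-linear extension into a complete separated target, followed by a density argument to upgrade from $\widehat{\CC[\h]}_b$-linearity to $\widehat{H_c}(W,\h)_b$-linearity) is clearly the intended one. Two small imprecisions are worth flagging. First, the set $\{v\in\widehat{M}_b : \widehat\psi(uv)=u\widehat\psi(v)\}$ for $u=y\in\h$ is \emph{not} a $\widehat{\CC[\h]}_b$-submodule, since $y$ does not commute with $\widehat{\CC[\h]}_b$; fortunately you do not need that --- it is a closed $\CC$-subspace (the zero set of the continuous map $v\mapsto u\widehat\psi(v)-\widehat\psi(uv)$, with $N$ $\mathfrak m$-adically separated because it is finitely generated over a complete Noetherian (semi)local ring), it contains the dense image of $M$, hence equals $\widehat{M}_b$. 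Second, the map $M\to\widehat{M}_b$ need not be injective (for instance it is zero if $b$ lies outside the support of $M$), so ``$\phi|_M$'' should be read as the composite $M\to\widehat{M}_b\to N$; this does not affect the argument, but the phrase ``$M\subset\widehat{M}_b$'' is not literally right.
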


\begin{proof} Straightforward.
\end{proof} 

\begin{remark}
Recall that by Theorem \ref{equi}, if $b=0$ then these functors 
are not only adjoint but also inverse to each other. 
\end{remark} 

\begin{proposition}\label{dua}
(i) For $M\in {\mathcal O}_\bc(W,\h^*)_b$, one has
$E^b(M^*)=(\widehat{M})^*$ in ${\mathcal O}_c(W,\h)_0$. 

(ii)  For $M\in {\mathcal O}_c(W,\h)_0$, 
$(\widehat{M}_b)^*=E_b(M^*)$ in ${\mathcal O}_\bc(W,\h^*)_b$. 

(iii) The functors $E_b$, $E^b$ are exact.
\end{proposition}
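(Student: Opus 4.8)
The plan is to deduce all three parts from the duality and completion machinery already in place, reducing everything to bookkeeping about where generalized $\mathbf h$-eigenspaces and formal completions sit. For part (i), start with $M\in {\mathcal O}_{\bc}(W,\h^*)_b$. The restricted dual $M^*$, computed with respect to the $\mathbf h$-grading, lives in ${\mathcal O}_c(W,\h)_0$ by the discussion in Subsection \ref{duali}, while the full dual is a module over $\widehat{H_c}(W,\h)_b$ in $\widehat{\mathcal O}_c(W,\h)^b$. So I would first unwind the definitions: $E^b$ extracts the $\h$-nilpotent vectors, which by Proposition \ref{nilpo} (applied after completing $M^*$ at $b$, or rather to the full-dual module, which is complete at $b$) coincide with the $\mathbf h$-finite vectors; and these are precisely the vectors in the restricted dual of $\widehat M$ (completion at $0$, i.e. the honest object of $\widehat{\mathcal O}_c(W,\h)$). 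The point is that dualizing interchanges ``completion at $b$'' with ``taking $\h$-nilpotent vectors'', because pairing a generalized $\mathbf h$-eigenvector against the full dual picks out exactly the finite part. I expect this to come down to the identity $(\widehat M)^* = E^b(M^*)$ already implicitly used for $b=0$ in Subsection \ref{duali}, now with the completion at $b$ inserted on the other side.

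For part (ii), I would simply apply part (i) in the dual direction — that is, replace $(W,\h,c)$ by $(W,\h^*,\bc)$, use that $\bc$ and $c$ swap under the isomorphism $\gamma$ of Subsection \ref{duali}, and use that $\dagger^2=\mathrm{id}$ together with Proposition \ref{isom}. Concretely, for $M\in{\mathcal O}_c(W,\h)_0$ the object $M^*$ lies in $\widehat{\mathcal O}_{\bc}(W,\h^*)^0$ after the identifications, and the claim $(\widehat M_b)^* = E_b(M^*)$ is the mirror of (i): completing $M$ at $b$ and then taking the continuous dual is the same as taking the full dual $M^*$ and then extracting generalized $\CC[\h^*]^W$-eigenvectors with eigenvalue corresponding to $b$, since that eigenvector condition is exactly Fourier-dual to $\h$-nilpotence. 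This is bookkeeping once (i) is in hand, so I would phrase it as ``apply (i) with $\h$ replaced by $\h^*$ and $c$ by $\bc$''.

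For part (iii), exactness of $E^b$ follows from part (i): $E^b$ is, up to the (exact, anti-)equivalences $*$ of Proposition \ref{isom} and the completion functor (which is exact since $\CC[[\h]]$, or $\widehat{\CC[\h]}_b$, is flat over $\CC[\h]$), identified with the completion functor composed with a duality, hence exact. More precisely, (i) exhibits $E^b\circ{*}$ on ${\mathcal O}_{\bc}(W,\h^*)_b$ as ${*}\circ\widehat{(\,\cdot\,)}$, and the right-hand side is a composition of exact functors; since $*$ is an anti-equivalence this forces $E^b$ to be exact on the image, which is all of $\widehat{\mathcal O}_c(W,\h)^b$. The same argument with (ii) gives exactness of $E_b$.

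The main obstacle, I expect, is part (i): making precise the claim that the $\h$-nilpotent (equivalently, by Proposition \ref{nilpo}, $\mathbf h$-finite) vectors in the full dual $M^*$ of a $b$-completed module are exactly the restricted dual of the $0$-completion $\widehat M$. One has to check that the natural pairing between $\widehat M$ (an object of $\widehat{\mathcal O}_c$, i.e. finitely generated over $\CC[[\h]]$ with the $\mathbf h$-action having finite-dimensional generalized eigenspaces, by Theorem \ref{equi}) and $E^b(M^*)$ is perfect, and that no $\h$-nilpotent vectors of $M^*$ are lost by passing from the $b$-completion to the $0$-completion. This is where I would spend the real effort; everything else is formal consequences of Propositions \ref{nilpo}, \ref{isom}, \ref{adj1} and the flatness of the completion maps.
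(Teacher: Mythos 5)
Your approach matches the paper's, which records (i),(ii) as straightforward from the definitions and deduces (iii) from exactness of the completion functors together with the (anti-)equivalence $*$. One small wrinkle worth noting: Proposition \ref{nilpo} is stated for modules over the completion at $0$, not at $b$, so it does not literally give $E^b(M^*)=\{\bold h\text{-finite vectors}\}$; but you never actually need it, since $E^b$ is defined outright as the space of $\h$-nilpotent vectors, and a functional $\phi\in M^*$ is killed by $y_1\cdots y_N$ for all $y_i\in\h$ exactly when $\phi$ vanishes on $\mathfrak{m}_0^N M$ (where $\mathfrak{m}_0\subset\CC[\h^*]$ is the ideal of the origin), i.e.\ exactly when $\phi$ is a continuous functional on $\widehat M$ --- which already disposes of the perfectness-of-pairing worry you raise at the end.
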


\begin{proof}
(i),(ii) are straightforward from the definitions. 
(iii) follows from (i),(ii), since the completion functors are
exact.  
\end{proof} 

\subsection{Parabolic induction and restriction functors for rational Cherednik
algebras} 

Theorem \ref{comp} allows us to define analogs of parabolic
restriction functors for rational Cherednik algebras. 

Namely, let $b\in\h$, and $W_b=W'$. 
Define a functor $\Res_b: {\mathcal O}_c(W,\h)_0\to
{\mathcal O}_{c'}(W',\h/\h^{W'})_0$ by the formula
$$
\Res_b(M)=(\zeta\circ E\circ I^{-1}\circ
\theta_*)(\widehat{M}_b).
$$  
 
We can also define the parabolic induction functors
in the opposite direction. Namely, 
let $N\in {\mathcal O}_{c'}(W',\h/\h^{W'})_0$. 
Then we can define the object $\Ind_b(N)\in {\mathcal
O}_c(W,\h)_0$ by the formula 
$$
\Ind_b(N)=(E^b\circ \theta_*^{-1}\circ
I)(\widehat{\zeta^{-1}(N)}_0).
$$

\begin{proposition}\label{propres} 
(i) The functors $\Ind_b$, $\Res_b$ are exact.

(ii) One has $\Ind_b(\Res_b(M))=E^b(\widehat{M}_b)$.
\end{proposition}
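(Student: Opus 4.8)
The plan is to prove the two parts in sequence, using the definitions of $\Res_b$ and $\Ind_b$ together with the adjunction from Proposition \ref{adj1} and the exactness statements already established.

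\textbf{Part (i).} The functor $\Res_b$ is a composition $\zeta\circ E\circ I^{-1}\circ\theta_*\circ(\widehat{\phantom{M}}_b)$. Each factor is exact: the completion functor $\widehat{\phantom{M}}_b$ is exact (stated in Subsection 3.4, before Proposition \ref{adj1}); $\theta_*$ is an equivalence of categories (it is induced by the algebra isomorphism $\theta$ of Theorem \ref{comp}), hence exact; $I^{-1}$ is the inverse of the equivalence $I$ from Lemma \ref{le}(i), hence exact; $E$ is exact by Proposition \ref{dua}(iii) (it is the functor $E^0$, or equivalently by Proposition \ref{dua} applied with $b=0$); and $\zeta$ is an equivalence of categories, hence exact. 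The composite of exact functors is exact, so $\Res_b$ is exact. For $\Ind_b=(E^b\circ\theta_*^{-1}\circ I)\circ(\widehat{\zeta^{-1}(\cdot)}_0)$, the same reasoning applies: $\zeta^{-1}$ is an equivalence, the completion at $0$ is exact, $I$ and $\theta_*^{-1}$ are equivalences, and $E^b$ is exact by Proposition \ref{dua}(iii). Hence $\Ind_b$ is exact.

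\textbf{Part (ii).} Here I unwind the two composite formulas and let the intermediate equivalences cancel. By definition,
$$
\Res_b(M)=(\zeta\circ E\circ I^{-1}\circ\theta_*)(\widehat{M}_b),
$$
so $\zeta^{-1}(\Res_b(M))=(E\circ I^{-1}\circ\theta_*)(\widehat{M}_b)$, which lives in ${\mathcal O}_{c'}(W',\h)_0$. Now one must be slightly careful: $E$ here denotes the passage to $\h$-nilpotent vectors inside a module over $\widehat{H_{c'}}(W_b,\h)_0$, i.e. the functor of Theorem \ref{equi}, which by that theorem is inverse to the completion functor for $(W_b,\h)$ at $0$. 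Therefore, when we apply $\Ind_b$ — whose definition begins by forming $\widehat{\zeta^{-1}(N)}_0$ with $N=\Res_b(M)$ — the completion-at-$0$ functor undoes the $E$, giving
$$
\widehat{\zeta^{-1}(\Res_b(M))}_0 = (I^{-1}\circ\theta_*)(\widehat{M}_b)
$$
as a module over $Z(W,W_b,\widehat{H_{c'}}(W_b,\h)_0)$. Applying the remaining factors $E^b\circ\theta_*^{-1}\circ I$, the pair $I^{-1}$ followed by $I$ cancels and $\theta_*$ followed by $\theta_*^{-1}$ cancels, leaving
$$
\Ind_b(\Res_b(M)) = E^b(\widehat{M}_b),
$$
as claimed.

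\textbf{Anticipated obstacle.} The one point that needs genuine care, rather than formal cancellation, is the interaction of the functor $E$ (for the parabolic algebra at $0$) with the completion-at-$0$ functor: the cancellation $\widehat{E(\cdot)}_0 = \mathrm{id}$ is exactly the content of Theorem \ref{equi}, but one must check that the object $(I^{-1}\circ\theta_*)(\widehat M_b)$, which a priori is only finitely generated over $\widehat{\CC[\h]}_b$, really does land in a category where Theorem \ref{equi} applies after passing through $I^{-1}$ — i.e. that $E\circ I^{-1}\circ\theta_*$ of $\widehat M_b$ is an object of ${\mathcal O}_{c'}(W',\h)_0$ and not merely of $\widehat{\mathcal O}_{c'}(W',\h)$. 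This is implicit in the very definition of $\Res_b$ as a functor landing in ${\mathcal O}_{c'}(W',\h/\h^{W'})_0$, so the statement is really that the composite $\widehat{\phantom{M}}_0\circ\zeta^{-1}\circ\Res_b$ recovers $I^{-1}\circ\theta_*$ applied to $\widehat M_b$; once this identification of intermediate objects is made explicit, the rest is bookkeeping of mutually inverse equivalences, and the remaining factor $E^b\circ\widehat{\phantom{M}}_b$ does not simplify further precisely because $b\neq 0$ in general (cf. the Remark after Proposition \ref{adj1}).
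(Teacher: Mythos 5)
Your proof is correct and takes essentially the same approach as the paper, which simply cites Proposition \ref{dua} for part (i) and calls part (ii) ``straightforward from the definition.'' You have merely spelled out the cancellations explicitly, including the genuinely relevant point that $\widehat{E(\,\cdot\,)}_0=\mathrm{id}$ comes from Theorem \ref{equi} applied to the parabolic algebra at $0$.
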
 

\begin{proof} Part (i) follows from the fact that the
functor $E^b$ and the completion functor $\widehat{}_b$ 
are exact (see Proposition \ref{dua}).
Part (ii) is straightforward from the definition.  
\end{proof}

\begin{theorem}\label{adj2}
The functor $\Ind_b$ is right adjoint to 
$\Res_b$.  
\end{theorem}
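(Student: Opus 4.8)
The plan is to exploit the fact that, by construction, both $\Res_b$ and $\Ind_b$ are composites of five functors, and that the five functors assembling $\Ind_b$ are precisely the right adjoints of the five functors assembling $\Res_b$, taken in the reverse order. The theorem then follows from the elementary fact that if $F=F_n\circ\cdots\circ F_1$ and each $F_i$ admits a right adjoint $G_i$, then $G_1\circ\cdots\circ G_n$ is a right adjoint of $F$.

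Unwinding the definitions, $\Res_b$ is the composite
$$
\zeta\circ E\circ I^{-1}\circ\theta_*\circ\big(M\mapsto\widehat M_b\big),
$$
while $\Ind_b$ is the composite
$$
E^b\circ\theta_*^{-1}\circ I\circ\big(N\mapsto\widehat N_0\big)\circ\zeta^{-1}.
$$
I would then check the five adjunctions, reading $\Res_b$ from the inside out. The completion functor $M\mapsto\widehat M_b$ has $E^b$ as a right adjoint by Proposition \ref{adj1}; this is the only non-formal ingredient. Next, $\theta_*$ is an equivalence of categories by Theorem \ref{comp}, so $\theta_*^{-1}$ is in particular right adjoint to it; likewise $I$ is quasi-inverse to the equivalence $I^{-1}$ of Lemma \ref{le}(i), hence right adjoint to it; the functor $E$ is an equivalence whose quasi-inverse is the completion functor $N\mapsto\widehat N_0$ by Theorem \ref{equi}, so the latter is right adjoint to $E$; and $\zeta$ is an equivalence, so $\zeta^{-1}$ is right adjoint to it. Composing these right adjoints in the opposite order gives back exactly $E^b\circ\theta_*^{-1}\circ I\circ\big(N\mapsto\widehat N_0\big)\circ\zeta^{-1}=\Ind_b$, which is the assertion.

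The only point demanding genuine attention --- and the step I expect to be the main, if modest, obstacle --- is the bookkeeping of the intermediate categories, i.e. checking that the five functors genuinely compose. Concretely, one must verify that under $\theta_*$ the category $\widehat{\mathcal O}_c(W,\h)^b$ corresponds to the full subcategory of $Z(W,W_b,\widehat{H_{c'}}(W_b,\h)_0)$-modules $N$ with $I^{-1}(N)\in\widehat{\mathcal O}_{c'}(W_b,\h)$, so that $E$, and therefore $\zeta\circ E$, is defined on it, and symmetrically that $\theta_*^{-1}\circ I$ applied to the completion $\widehat N_0$ of an object of $\widehat{\mathcal O}_{c'}(W_b,\h)$ lands back in $\widehat{\mathcal O}_c(W,\h)^b$. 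This is a routine consequence of the explicit formulas of Theorem \ref{comp} together with the fact that the module $P$ is free of finite rank over $\widehat{H_{c'}}(W_b,\h)_0$. Once it is in place the adjunction is automatic, with unit and counit obtained by pasting the (invertible) units and counits of the four equivalences with those of the adjunction of Proposition \ref{adj1}.
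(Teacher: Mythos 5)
Your proof is correct and is essentially the paper's own argument: the paper simply writes out the resulting chain of Hom-isomorphisms explicitly, moving one functor across at a time (using that $\zeta$, $E$, $I^{-1}\circ\theta_*$ are equivalences and then invoking Proposition \ref{adj1} at the last step), whereas you phrase the same facts as the formal composition of adjunctions. The content and the ingredients used are identical.
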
 

\begin{proof} We have 
$$
\Hom(\Res_b(M),N)=
\Hom((\zeta\circ E\circ I^{-1}\circ \theta_*)(\widehat{M}_b),N)=
$$
$$
\Hom((E\circ I^{-1}\circ \theta_*)(\widehat{M}_b),\zeta^{-1}(N))=
$$
$$
\Hom((I^{-1}\circ \theta_*)(\widehat{M}_b),\widehat{\zeta^{-1}(N)}_0)=
\Hom(\widehat{M}_b,(\theta_*^{-1}\circ
I)(\widehat{\zeta^{-1}(N)}_0))=
$$
$$
\Hom(M,(E^b\circ \theta_*^{-1}\circ
I)(\widehat{\zeta^{-1}(N)}_0))=\Hom(M,\Ind_b(N)).
$$
At the end we used Proposition \ref{adj1}.  
\end{proof} 

\begin{corollary}\label{proj} The functor $\Res_b$ maps projective objects 
to projective ones, and the functor $\Ind_b$ maps injective objects to
injective ones. 
\end{corollary}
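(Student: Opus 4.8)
The plan is to deduce Corollary \ref{proj} formally from the adjunction established in Theorem \ref{adj2} together with the exactness statement in Proposition \ref{propres}(i). The key observation is a standard piece of homological nonsense: if $(F, G)$ is an adjoint pair of additive functors between abelian categories, with $F$ left adjoint to $G$, and if $G$ is exact, then $F$ preserves projectives; dually, if $F$ is exact, then $G$ preserves injectives. In our situation $F = \Res_b$ and $G = \Ind_b$, and both are exact by Proposition \ref{propres}(i), so both implications apply: $\Res_b$ preserves projectives and $\Ind_b$ preserves injectives.

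Concretely, for the first claim I would argue as follows. Let $P \in {\mathcal O}_c(W,\h)_0$ be projective; I must show $\Res_b(P)$ is projective in ${\mathcal O}_{c'}(W',\h/\h^{W'})_0$, i.e. that $\Hom(\Res_b(P), -)$ is exact. By Theorem \ref{adj2} there is a natural isomorphism $\Hom(\Res_b(P), N) \cong \Hom(P, \Ind_b(N))$. Now $\Ind_b$ is exact (Proposition \ref{propres}(i)) and $\Hom(P, -)$ is exact since $P$ is projective, so the composite $N \mapsto \Hom(P, \Ind_b(N))$ is exact, hence so is $\Hom(\Res_b(P), -)$. Therefore $\Res_b(P)$ is projective. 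The statement for $\Ind_b$ and injectives is the mirror image: for injective $Q$, the functor $N \mapsto \Hom(N, \Ind_b(Q)) \cong \Hom(\Res_b(N), Q)$ is the composite of the exact functor $\Res_b$ with the exact functor $\Hom(-, Q)$, hence exact, so $\Ind_b(Q)$ is injective.

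There is really no main obstacle here — the argument is purely formal once Theorem \ref{adj2} and Proposition \ref{propres}(i) are in hand. The only point that deserves a moment's care is making sure the relevant categories have enough projectives and injectives for the notion to be meaningful and for the Hom-functor characterization of projectivity/injectivity to be the one in force; but this is well known for category ${\mathcal O}_c(W,\h)_0$ (it is a highest-weight category, see \cite{GGOR}), and in any case the implication "$\Hom(F(P),-)$ exact $\Rightarrow F(P)$ projective" only uses that $F(P)$ satisfies the defining lifting property, which follows directly from exactness of $\Hom(F(P),-)$ on short exact sequences. So the proof will be two or three lines, essentially just citing the adjunction and the exactness.
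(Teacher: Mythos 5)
Your argument is correct and is exactly the standard deduction the paper has in mind: the corollary is stated without proof precisely because it follows formally from Theorem \ref{adj2} (the adjunction $\Res_b \dashv \Ind_b$) together with the exactness of both functors from Proposition \ref{propres}(i). Nothing to add.
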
 

We can also define functors
$\res_\lambda: {\mathcal O}_c(W,\h)_0\to 
{\mathcal O}_{c'}(W',\h/\h^{W'})_0$ and $\ind_\lambda: 
{\mathcal O}_{c'}(W',\h/\h^{W'})_0\to 
{\mathcal O}_c(W,\h)_0$, attached to $\lambda\in
\h^{*W'}_{\reg}$, by 
$$
\res_\lambda:= \dagger\circ \Res_\lambda\circ \dagger, 
\ind_\lambda:= \dagger\circ \Ind_\lambda\circ \dagger, 
$$
where $\dagger$ is as in Subsection \ref{duali}.

\begin{corollary}\label{proj1} 
The functors $\res_\lambda$, 
$\ind_\lambda$ are exact. 
The functor $\ind_\lambda$ is left adjoint to $\res_\lambda$. 
The functor $\ind_\lambda$ maps projective objects 
to projective ones, and the functor $\res_\lambda$ injective objects to
injective ones. 
\end{corollary}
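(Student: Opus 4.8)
The plan is to deduce Corollary~\ref{proj1} directly from Theorem~\ref{adj2}, Corollary~\ref{proj}, and Proposition~\ref{propres}(i), using the fact that $\dagger$ is an exact anti-equivalence of categories with $\dagger^2=\id$. Recall from Subsection~\ref{duali} that $\dagger\colon{\mathcal O}_c(W,\h)_0\to{\mathcal O}_{\bc}(W,\h^*)_0^{\op}$ is an equivalence onto the opposite category, that it is its own inverse, and that it takes the grading by generalized $\bold h$-eigenvalues to its restricted dual; in particular it is exact (being essentially a linear duality on finite-dimensional generalized eigenspaces). The key observation is that conjugation by such a duality turns a right adjoint into a left adjoint and swaps the roles of projectives and injectives, because projectives in ${\mathcal O}_c(W,\h)_0$ correspond under $\dagger$ to injectives in the opposite category, i.e. to injectives in ${\mathcal O}_{\bc}(W,\h^*)_0$ viewed honestly.

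First I would establish exactness: since $\Res_\lambda$ and $\Ind_\lambda$ are exact by Proposition~\ref{propres}(i) (applied with $b=\lambda\in\h^{*W'}_{\reg}$, which makes sense by the discussion of parabolic subgroups, as a stabilizer of a point of $\h^*$), and $\dagger$ is exact, the composites $\res_\lambda=\dagger\circ\Res_\lambda\circ\dagger$ and $\ind_\lambda=\dagger\circ\Ind_\lambda\circ\dagger$ are exact. Next I would verify the adjunction. For $M\in{\mathcal O}_c(W,\h)_0$ and $N\in{\mathcal O}_{c'}(W',\h/\h^{W'})_0$, using that $\dagger$ is an anti-equivalence one has
$$
\Hom(\ind_\lambda(N),M)=\Hom(\dagger\Ind_\lambda\dagger(N),M)\cong\Hom(\dagger(M),\Ind_\lambda(\dagger N)),
$$
and then by Theorem~\ref{adj2} (that $\Ind$ is right adjoint to $\Res$) this equals $\Hom(\Res_\lambda\dagger(M),\dagger N)$, which via $\dagger$ again is $\cong\Hom(N,\dagger\Res_\lambda\dagger(M))=\Hom(N,\res_\lambda(M))$. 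Hence $\ind_\lambda$ is left adjoint to $\res_\lambda$. One must be slightly careful to apply $\dagger$ on the correct category each time — on $\h^*$-side versus $\h$-side — and to track that $\bc$ replaces $c$ under $\dagger$ but then gets replaced back; since $\dagger^2=\id$ this bookkeeping closes up, but it is the one place where an error could creep in.

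Finally, for the statement about projectives and injectives: $\dagger$ sends projectives to injectives and vice versa (it is an exact anti-equivalence, so it carries projective objects of one category to injective objects of the opposite one, which are the injective objects honestly). Given a projective $P$, $\dagger(P)$ is injective; applying Corollary~\ref{proj} in its dual form — $\Ind_\lambda$ sends injectives to injectives — we get that $\Ind_\lambda(\dagger P)$ is injective, whence $\ind_\lambda(P)=\dagger(\Ind_\lambda(\dagger P))$ is projective. Symmetrically, $\Res_\lambda$ sends projectives to projectives by Corollary~\ref{proj}, so $\res_\lambda$ sends injectives to injectives. I expect the main (though minor) obstacle to be purely a matter of conventions: making sure the adjunction comes out as a \emph{left} adjoint rather than a right adjoint, since passing to opposite categories flips the direction, and checking that Corollary~\ref{proj}, stated for $\Res_b$/$\Ind_b$, can indeed be invoked with $b=\lambda$ a point of $\h^*$ (this is legitimate by the symmetric role of $\h$ and $\h^*$ noted in Subsection~3.1).
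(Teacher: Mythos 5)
Your proof is correct and is exactly the intended one: the paper states Corollary~\ref{proj1} without proof, leaving implicit precisely the conjugation-by-$\dagger$ argument you spell out — exactness from exactness of $\dagger$ together with Proposition~\ref{propres}(i), the adjunction by transporting Theorem~\ref{adj2} through the contravariant equivalence $\dagger$ (which flips right adjoints to left adjoints), and the homological statement from Corollary~\ref{proj} combined with the fact that an exact anti-equivalence interchanges projectives and injectives. Your parameter bookkeeping ($c\leftrightarrow\bc$, $b\in\h$ versus $\lambda\in\h^*$) closes up correctly, so the one point you flagged as a possible source of error is in fact fine.
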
 

We also have the following proposition, whose proof is
straightforward. 

\begin{proposition}\label{Jacc}
We have 
$$
\ind_\lambda(N)=({\mathcal J}\circ \psi_\lambda^{-1})(N),
$$
and 
$$
\res_\lambda(M)=(\psi_\lambda\circ E_\lambda)(\widehat{M}),
$$
where $\psi_\lambda$ is defined in Corollary \ref{whit}.
\end{proposition}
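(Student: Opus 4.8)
The plan is to unwind all the relevant definitions and show that the two claimed identities follow by bookkeeping from the constructions already in place. For the first identity, recall that $\ind_\lambda = \dagger\circ\Ind_\lambda\circ\dagger$ by definition, and that $\Ind_\lambda(N) = (E^\lambda\circ\theta_*^{-1}\circ I)(\widehat{\zeta^{-1}(N)}_0)$, where here the completion $\theta_*$ and the category $\widehat{\mathcal O}$ are taken at the point $\lambda\in\h^{*W'}_{\reg}$ (so the relevant instance of Theorem \ref{comp} is the one used in Corollary \ref{whit}, completing at a point of $\h^*$ rather than $\h$). On the other hand, $\mathcal J = E\circ\widehat{\phantom{M}}$ is the Jacquet functor at zero, and $\psi_\lambda^{-1}: {\mathcal O}_{c'}(W_\lambda,\h/\h^{W_\lambda})_0\to {\mathcal O}_c(W,\h)_\lambda$ is the inverse of the equivalence from Corollary \ref{whit}, which is precisely $\zeta\circ E\circ I^{-1}\circ\theta_*$ composed appropriately (i.e., $\psi_\lambda = \zeta\circ(\text{equivalence from Theorem \ref{comp}})$). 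I would first observe that applying $\dagger$ twice is the identity and that $\dagger$ intertwines completion at $\lambda$ with $E$-type functors via Subsection \ref{duali} and Proposition \ref{dua}; tracking the duality through the composition $E^\lambda\circ\theta_*^{-1}\circ I$ converts it, up to $\dagger$'s, into $E\circ(\text{stuff})\circ\widehat{\phantom{M}}$, which is exactly $\mathcal J$ precomposed with $\psi_\lambda^{-1}$.

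For the second identity, the argument is the mirror image. By definition $\res_\lambda = \dagger\circ\Res_\lambda\circ\dagger$ and $\Res_\lambda(M) = (\zeta\circ E\circ I^{-1}\circ\theta_*)(\widehat{M}_\lambda)$, again with everything completed at $\lambda\in\h^*$. Using Proposition \ref{dua}(ii), $(\widehat{M}_\lambda)^* = E_\lambda(M^*)$, so conjugating $\Res_\lambda$ by $\dagger$ replaces the completion-at-$\lambda$ step by the functor $E_\lambda$ applied to the ordinary completion $\widehat{M}$ at zero. The surviving $\zeta\circ E\circ I^{-1}\circ\theta_*$ part is, by the proof of Corollary \ref{whit}, precisely the equivalence $\psi_\lambda$; and since the generalized eigenspace extraction $E_\lambda$ of $\widehat M$ lands in ${\mathcal O}_c(W,\h)_\lambda$, applying $\psi_\lambda$ to it gives $(\psi_\lambda\circ E_\lambda)(\widehat M)$, as claimed. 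I would write this out as a short chain of equalities of functors, citing Proposition \ref{dua} for the duality-completion interchange and Corollary \ref{whit} for the identification of $\psi_\lambda$ with $\zeta\circ E\circ I^{-1}\circ\theta_*$.

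The main obstacle I anticipate is purely notational: one must be careful that there are two distinct completions in play — at $b\in\h$ (used to define $\Res_b,\Ind_b$) and at $\lambda\in\h^*$ (used to define $\res_\lambda,\ind_\lambda$ via $\dagger$ and to define $\psi_\lambda$) — and that the functor $\dagger$ swaps $H_c(W,\h)$ with $H_{\bc}(W,\h^*)$, so that after conjugation the roles of $\h$ and $\h^*$ are exchanged. Keeping the decorations ($b$ versus $\lambda$, superscript versus subscript on $E$, $c$ versus $\bc$) straight is the only real content; once the diagram of functors is set up correctly, each equality is immediate from a result already proved. I expect no genuine mathematical difficulty, which is consistent with the paper's assertion that the proof is straightforward; the write-up should therefore be a compact display of the functor identities with the appropriate citations inserted, rather than any extended computation.
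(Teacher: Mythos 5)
The paper itself offers no written proof (it simply asserts ``whose proof is straightforward''), and your proposal carries out exactly the intended unwinding: express $\res_\lambda=\dagger\circ\Res_\lambda\circ\dagger$ and $\ind_\lambda=\dagger\circ\Ind_\lambda\circ\dagger$ in terms of completion, $\theta_*$, $I^{-1}$, $E$-type functors and $\zeta$, then use Proposition~\ref{dua} to interchange $\dagger$ with completion-at-$\lambda$ (replacing it by $E_\lambda\circ\widehat{\,\cdot\,}$), and finally recognize the residue as $\psi_\lambda$ by tracing the proof of Corollary~\ref{whit}. This is the same route as the paper envisions, and you correctly flag the only real hazard, namely keeping the two completions (at $b\in\h$ vs.\ at $\lambda\in\h^*$) and the $c\leftrightarrow\bc$, $\h\leftrightarrow\h^*$ swap under $\dagger$ straight; your identification ``$\psi_\lambda=\zeta\circ E\circ I^{-1}\circ\theta_*$'' is harmless even though in the $\h^*$-completion picture the $E$-step degenerates to the identity.
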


\subsection{Some evaluations of the parabolic induction and 
restriction functors}

For generic $c$, the category ${\mathcal O}_c(W,\h)$ is
semisimple, and naturally equivalent to the category $\Rep W$ 
of finite dimensional representations of $W$, via the functor $\tau\mapsto
M_c(W,\h,\tau)$. (If $W$ is a Coxeter group, the exact set of
such $c$ (which are called regular) is known from 
\cite{GGOR} and \cite{Gy}).   

\begin{proposition}\label{compuRI} (i) Suppose that $c$ is generic. 
Upon the above identification, the functors
$\Ind_b$, $\ind_\lambda$ and $\Res_b$, $\res_\lambda$ 
go to the usual induction and restriction
functors between categories $\Rep W$ and $\Rep W'$.  
In other words, we have 
$$
\Res_b(M_c(W,\h,\tau))=\oplus_{\xi\in \widehat{W'}}n_{\tau\xi}M_{c'}(W',\h/\h^{W'},\xi),
$$
and 
$$
\Ind_b(M_{c'}(W',\h/\h^{W'},\xi))=\oplus_{\tau\in \widehat{W}}
n_{\tau\xi}M_{c}(W,\h,\tau),
$$
where $n_{\tau\xi}$ is the multiplicity of occurrence of 
$\xi$ in $\tau|_{W'}$, and similarly for ${\rm res}_\lambda$, 
${\rm ind}_\lambda$.  

(ii) The equations of (i) hold at the level of Grothendieck
groups for all $c$. 
\end{proposition}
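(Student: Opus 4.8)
The plan is to compute $\Res_b$ on standard modules directly from Theorem~\ref{comp} (this will work for every $c$, giving the restriction parts of both (i) and (ii) at once), to deduce the induction part of (i) for generic $c$ from semisimplicity and Theorem~\ref{adj2}, to transfer everything to $\res_\lambda,\ind_\lambda$ via the duality $\dagger$, and finally to prove the induction part of (ii) for arbitrary $c$ using the Euler form on the Grothendieck group. For the restriction computation: since $M_c(W,\h,\tau)=\CC[\h]\otimes\tau$ is free over $\CC[\h]$, one has $\widehat{M_c(W,\h,\tau)}_b=\widehat{\CC[\h]}_b\otimes\tau$, which under the canonical identification $\widehat{\CC[\h]}_b\cong{\rm Fun}_{W_b}(W,\CC[[\h]])$ becomes ${\rm Fun}_{W_b}(W,\CC[[\h]]\otimes\tau|_{W_b})$, i.e.\ $I$ of an $\widehat{H_{c'}}(W_b,\h)_0$-module. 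The key observation is that on the component at $b$ the summands of (\ref{thetay}) with $s\notin W_b$ are regular at $b$, hence belong to the off-diagonal part of the centralizer algebra that $I^{-1}$ discards, while the surviving operators assemble into the Dunkl--Opdam operators of $(W_b,\h,c')$; so $I^{-1}\theta_*(\widehat{M_c(W,\h,\tau)}_b)\cong\widehat{M_{c'}(W_b,\h,\tau|_{W_b})}_0$. Applying $E$ (inverse to completion at $0$ by Theorem~\ref{equi}) and then $\zeta$ yields, for all $c$,
$$
\Res_b(M_c(W,\h,\tau))\cong M_{c'}\big(W',\h/\h^{W'},\tau|_{W'}\big)=\bigoplus_{\xi\in\widehat{W'}}n_{\tau\xi}\,M_{c'}(W',\h/\h^{W'},\xi).
$$

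For generic $c$ the categories $\mathcal O_c(W,\h)_0\simeq\Rep W$ and $\mathcal O_{c'}(W',\h/\h^{W'})_0\simeq\Rep W'$ are semisimple, standard modules are simple and generate, and the display above shows that $\Res_b$ is the ordinary restriction functor $\Rep W\to\Rep W'$. A right adjoint of this functor between semisimple categories is unique up to isomorphism and equals the ordinary induction functor; since $\Ind_b$ is a right adjoint of $\Res_b$ by Theorem~\ref{adj2}, it is the ordinary induction functor, which gives the displayed formula for $\Ind_b$. Running the same argument with $(W,\h^*,\bc)$ in place of $(W,\h,c)$, together with the fact that $\dagger$ is an equivalence carrying $M_c(W,\h,\tau)$ to the costandard object attached to $(W,\h^*,\bc,\tau^*)$ --- which in the generic case is $M_{\bc}(W,\h^*,\tau^*)$ --- takes care of $\res_\lambda=\dagger\circ\Res_\lambda\circ\dagger$ and $\ind_\lambda=\dagger\circ\Ind_\lambda\circ\dagger$, completing~(i).

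For part~(ii), the restriction statements are already contained in the first display, so only $[\Ind_b(M_{c'}(W',\h/\h^{W'},\xi))]=\sum_\tau n_{\tau\xi}[M_c(W,\h,\tau)]$ remains, and here imitating the first paragraph is not available: for $b\ne0$ the completion at $b$ and $E^b$ are merely adjoint (Proposition~\ref{adj1}), so $\Ind_b$ of a standard is in general a proper thickening of a direct sum of standard modules. Instead I would use that $\mathcal O_c(W,\h)_0$ is a highest weight category of finite global dimension \cite{GGOR}, with standard modules $M_c(W,\h,\tau)$, costandard modules $\nabla_c(W,\h,\tau)$ satisfying $[\nabla_c(W,\h,\tau)]=[M_c(W,\h,\tau)]$ in $K_0$, and $\Ext^i(M_c(W,\h,\tau),\nabla_c(W,\h,\sigma))=\delta_{i,0}\delta_{\tau\sigma}\CC$. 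With the Euler form $\chi(X,Y):=\sum_i(-1)^i\dim\Ext^i(X,Y)$ these facts imply that the coefficient of $[M_c(W,\h,\tau)]$ in an arbitrary class $[X]$ equals $\chi(M_c(W,\h,\tau),X)$. Now $\Res_b$ is exact and $\Ind_b$ preserves injectives (Corollary~\ref{proj}), so the adjunction of Theorem~\ref{adj2} upgrades to $\Ext^i_W(M,\Ind_b N)\cong\Ext^i_{W'}(\Res_b M,N)$; combining this with the value of $\Res_b$ on standards and the identity $[M_{c'}(\xi)]=[\nabla_{c'}(\xi)]$ gives
$$
\chi_W\big(M_c(W,\h,\tau),\Ind_b(M_{c'}(W',\h/\h^{W'},\xi))\big)=\chi_{W'}\Big(\textstyle\bigoplus_{\xi'}n_{\tau\xi'}M_{c'}(W',\h/\h^{W'},\xi'),\,M_{c'}(W',\h/\h^{W'},\xi)\Big)=n_{\tau\xi},
$$
which is exactly the required coefficient; the analogous statements for $\res_\lambda,\ind_\lambda$ then follow by applying $\dagger$ and using $n_{\tau^*\xi^*}=n_{\tau\xi}$. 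I expect this last step to be the main obstacle: the lack of a module-level description of $\Ind_b$ of a standard for $b\ne0$ forces one down to $K_0$ and into the homological structure of category $\mathcal O$, whereas a secondary, purely bookkeeping difficulty is making precise in the first paragraph which terms of (\ref{thetay}) survive the Morita reduction $I^{-1}$ and that the survivors reconstitute the Dunkl--Opdam operators of $(W_b,\h,c')$.
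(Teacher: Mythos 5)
Your proposal is correct but takes a genuinely different route from the paper's, which dispatches both parts with a two-line deformation argument: part (i) is checked at $c=0$ and deformed to generic $c$, and part (ii) follows because $\Res_b,\Ind_b$ are exact and flat in $c$, so their effect on $K_0$ (in the basis of standard modules) is constant in $c$. You instead isolate the stronger module-level claim $\Res_b(M_c(W,\h,\tau))\cong M_{c'}(W',\h/\h^{W'},\tau|_{W'})$ for \emph{every} $c$, which the paper never asserts (it gets it only for generic $c$, and otherwise only in $K_0$), and then handle $\Ind_b$ by adjunction plus Frobenius reciprocity for generic $c$, and by the highest-weight Euler pairing $\chi(M_c(\tau),-)$ together with $\Ext^i(M,\Ind_b N)\cong\Ext^i(\Res_b M,N)$ for part (ii). That intermediate claim is indeed true (it is proved carefully in later work of Shan), and your use of it in (ii) is clean, so the overall plan works; but your justification of the claim itself is loose at precisely the point you flag. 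The phrase ``the summands with $s\notin W_b$ belong to the off-diagonal part that $I^{-1}$ discards'' is not right as stated: each such summand has a diagonal $-f(w)$ contribution, and in any case the $A$-module structure on $N=I^{-1}(M)=\Hom_Z(P,\widehat{M}_b)$ is intrinsic to $N$ and is not read off the formula (\ref{thetay}) for $\theta(y_a)\in Z$. One has to identify $\Hom_Z(P,\widehat{M}_b)$ with $\CC[[\h]]\otimes\tau|_{W'}$ as an $A$-module, e.g.\ by exhibiting inside it a cyclic $\h$-nilpotent $W'$-subrepresentation isomorphic to $\tau|_{W'}$. In summary: the paper's deformation argument is shorter and more elementary; yours buys a stronger module-level restriction formula and replaces the deformation in (ii) by a self-contained Euler-form computation, at the cost of having to do the Morita bookkeeping honestly.
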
 

\begin{proof}
Part (i) is easy for $c=0$, and is obtained for generic $c$ 
by a deformation argument. Part (ii) is also obtained 
by deformation argument, taking into account that the functors
$\Res_b$ and $\Ind_b$ are exact and flat with respect to $c$. 
\end{proof} 

\begin{example} Suppose that $W'=1$. Then 
$\Res_b(M)$ is the fiber of $M$ at $b$, while
$\Ind_b(\CC)=P_{KZ}$, the object defined in \cite{GGOR}, which is
projective and injective (see Remark \ref{conhol}). This shows that Proposition
\ref{compuRI} (i) does not hold for special $c$, as $P_{KZ}$ is
not, in general, a direct sum of standard modules.  
\end{example}

\subsection{Dependence of the functor $\Res_b$ on $b$}\label{dep}

Let $W'\subset W$ be a parabolic subgroup. 
In the construction of the functor $\Res_b$, 
the point $b$ can be made a variable which belongs to the open
set $\h^{W'}_{\reg}$. 

Namely, let $\widehat{\h^{W'}_{\reg}}$
be the formal neighborhood of the locally closed set $\h^{W'}_{\reg}$
in $\h$, and let $\pi: \widehat{\h^{W'}_{\reg}}\to \h/W$ be 
the natural map (note that this map is an \'etale covering
of the image with the Galois group $N_W(W')/W'$, where
$N_W(W')$ is the normalizer of $W'$ in $W$). 
Let $\widehat{H_c}(W,\h)_{\h^{W'}_{\reg}}$ be the
pullback of the sheaf $H_{c,W,\h}$ under $\pi$. 
We can regard it as a sheaf of algebras over 
$\h^{W'}_{\reg}$. Similarly to Theorem \ref{comp}
we have an isomorphism 
$$
\theta: \widehat{H_c}(W,\h)_{\h^{W'}_{\reg}}\to 
Z(W,W',\widehat{H_{c'}}(W',\h/\h^{W'})_0)\hat\otimes
D(\h^{W'}_{\reg}),
$$
where $D(\h^{W'}_{\reg})$ is the sheaf of differential
operators on $\h^{W'}_{\reg}$, and $\hat\otimes$ is an
appropriate completion of the tensor product.

Thus, repeating the construction of $\Res_b$, we can define 
the functor 
$$
\Res: {\mathcal O}_c(W,\h)_0\to 
{\mathcal O}_{c'}(W',\h/\h^{W'})_0\boxtimes {\rm
Loc}(\h^{W'}_{\reg}),
$$
where ${\rm Loc}(\h^{W'}_{\reg})$ stands for the category of
local systems (i.e. O-coherent D-modules) on $\h^{W'}_{\reg}$. 
This functor has the property that $\Res_b$ is the fiber of $\Res$ at $b$.
Namely, the functor $\Res$ is defined by the formula 
$$
\Res(M)=(E\circ I^{-1}\circ
\theta_*)(\widehat{M}_{\h^{W'}_{\reg}}),
$$  
where $\widehat{M}_{\h^{W'}_{\reg}}$ is the restriction of the
sheaf $M$ on $\h$ to the formal neighborhood of $\h^{W'}_{\reg}$.

\begin{remark}
If $W'$ is the trivial group, the functor $\Res$
is just the KZ functor from \cite{GGOR}. 
Thus, $\Res$ is a relative version of the KZ functor. 
\end{remark}

Thus, we see that the functor $\Res_b$ does not depend on $b$, up
to an isomorphism. A similar statement is true for the functors 
$\Ind_b$, $\res_\lambda$, $\ind_\lambda$.

\begin{conjecture}\label{bl}
For any $b\in \h,\lambda\in \h^*$ such that $W_b=W_\lambda$, 
we have isomorphisms of functors $\Res_b\cong \res_\lambda$,
$\Ind_b\cong \ind_\lambda$. 
\end{conjecture}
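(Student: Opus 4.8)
The plan is to prove Conjecture \ref{bl} by constructing an explicit comparison between the two completion-based constructions, passing through the relative version $\Res$ of Subsection \ref{dep} and its analog for the completion at a point $\lambda \in \h^{*W'}_{\reg}$. First I would note that $\res_\lambda$ is, by definition, $\dagger \circ \Res_\lambda \circ \dagger$, and that the duality functor $\dagger$ exchanges $\h$ and $\h^*$ and $c$ and $\bc$; so the statement $\Res_b \cong \res_\lambda$ is equivalent to an isomorphism between the functor $\Res_b$ built from the completion of $H_c(W,\h)$ at $b \in \h^{W'}_{\reg}$ and the functor built (via $*$ and Proposition \ref{isom}) from the completion of $H_{\bc}(W,\h^*)$ at $\lambda \in \h^{*W'}_{\reg}$. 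Both completions are governed by the isomorphism $\theta$ of Theorem \ref{comp}, which has the \emph{same} target $Z(W,W',\widehat{H_{c'}}(W',\h/\h^{W'})_0)$ in both cases (the parabolic $W'$, the restricted parameter $c'$, and the space $\h/\h^{W'}$ depend only on $W'$, not on the choice of $b$ or $\lambda$). Thus the real content is that the two embeddings of this centralizer algebra — one coming from $b$, one from $\lambda$ — produce the same functor to $\mathcal{O}_{c'}(W',\h/\h^{W'})_0$ after applying $E$ and $\zeta$, i.e. after extracting $\h$-nilpotent vectors and restoring the $\h^W$-direction.

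Second, I would make $b$ a variable, as in Subsection \ref{dep}, so that $\Res_b$ is the fiber at $b$ of a functor $\Res$ valued in $\mathcal{O}_{c'}(W',\h/\h^{W'})_0 \boxtimes \mathrm{Loc}(\h^{W'}_{\reg})$, and similarly make $\lambda$ a variable, obtaining a functor $\res$ valued in $\mathcal{O}_{c'}(W',\h/\h^{W'})_0 \boxtimes \mathrm{Loc}(\h^{*W'}_{\reg})$. Since all fibers of a local system on a connected variety are (non-canonically) isomorphic, and since $\h^{W'}_{\reg}$ and $\h^{*W'}_{\reg}$ are both connected (being complements of finitely many hyperplanes in affine spaces of the same dimension $\dim \h^W$... actually of dimension $\dim \h - \dim \h' = \dim \h^{W'}$ and $\dim \h^{*W'}$ respectively, which agree), it suffices to exhibit a \emph{single} point $b_0$ and a \emph{single} point $\lambda_0$ at which the two fibers agree, together with enough structure to see that the comparison is compatible with the $H_{c'}(W',\h/\h^{W'})$-module structure. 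The cleanest way to get such a point is a degeneration/limit argument: as $b$ scales to $\infty$ along a ray in $\h^{W'}_{\reg}$, the Dunkl-Opdam operators for $(W,\h)$ near $b$ converge — after the evident rescaling used in Theorem \ref{comp} — to the Dunkl operators for $(W',\h)$ plus lower-order terms, and similarly the completion at $\lambda$ scaled to $\infty$ degenerates the same way; both limits are controlled by the KZ-type local system attached to $W'$ acting on $\h^{W'}_{\reg}$ (resp. $\h^{*W'}_{\reg}$).

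Third, and more structurally, I would try to identify both $\Res$ and $\res$ intrinsically with a single functor that manifestly does not distinguish $\h$ from $\h^*$: namely the functor sending $M$ to the $H_{c'}(W',\h/\h^{W'})$-module of ``asymptotic'' or ``leading-term'' sections of $M$ along the subvariety where $W'$ is the stabilizer. Concretely, one restricts the sheaf $M$ on $\h$ to the formal neighborhood of $\h^{W'}_{\reg}$, applies $\theta_*$, applies $I^{-1}$ to land in $\widehat{H_{c'}}(W',\h/\h^{W'})_0 \hat\otimes D(\h^{W'}_{\reg})$-modules, takes $\h$-nilpotent vectors (the functor $E$) to land in $\mathcal{O}_{c'}(W',\h/\h^{W'})_0 \boxtimes \mathrm{Loc}(\h^{W'}_{\reg})$. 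Applying $\dagger$ (which is the continuous-dual-with-respect-to-$\bold h$ functor, intertwining the geometry of $\h$ with that of $\h^*$) to the $W$-side module and chasing through the formulas of Theorem \ref{comp} — noting that $\dagger$ sends $x_\alpha \mapsto$ (operator on $\h^*$-side), $y_a \mapsto$ (operator on $\h^*$-side), and is compatible with the $\gamma$-twist $c \leftrightarrow \bc$ — should show that $\dagger \circ \Res$ on the $\h^*$-side is exactly $\res$, now interpreted as a local system on $\h^{*W'}_{\reg}$, which by connectedness has the same fibers as the one on $\h^{W'}_{\reg}$. The hypothesis $W_b = W_\lambda$ guarantees we are comparing local systems attached to the \emph{same} parabolic, so their monodromy representations — which are governed by the same braid group of $W'$ acting on $\h/\h^{W'}$, with the same Hecke-algebra parameters built from $c'$ — coincide, forcing $\Res_b \cong \res_\lambda$ as functors.

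The main obstacle I anticipate is precisely the last compatibility: showing that the duality functor $\dagger$ intertwines the geometric construction of $\Res$ on $\h$ with that of $\res$ on $\h^*$ at the level of the centralizer algebra $Z(W,W',\widehat{H_{c'}}(W',\h/\h^{W'})_0)$, rather than just at the level of the abstract categories. The issue is that $\dagger$ is defined using the grading operator $\bold h$, which is not manifestly compatible with the étale coordinate on $\h^{W'}_{\reg}$ versus $\h^{*W'}_{\reg}$; reconciling the $\bold h$-grading with the two different ``directions of completion'' will require care with the formulas \eqref{thetay}, and in particular with the claim that the local-system parts on the two sides are dual/isomorphic. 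A secondary obstacle is making the degeneration argument rigorous — showing that the limit as $b \to \infty$ genuinely computes the monodromy and does not lose information — though this can likely be sidestepped entirely if the $\dagger$-intertwining argument of the previous paragraph can be made to work directly.
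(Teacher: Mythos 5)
This statement is Conjecture \ref{bl} in the paper; it is not a theorem, and the paper does not prove it. The only case the paper establishes is $W_b=W_\lambda=1$ (Remark \ref{conhol}), and that argument relies on a feature that is special to $W'=1$: the target category is $\mathrm{Vect}$, so two exact functors whose values have equal dimensions (read off from the leading coefficient of the Hilbert polynomial of $M$) are automatically isomorphic. For $W'\ne 1$ the target is $\mathcal{O}_{c'}(W',\h/\h^{W'})_0$, and matching dimensions, or even matching classes in the Grothendieck group (which Proposition \ref{compuRI}(ii) gives you), does not determine an exact functor up to isomorphism.

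The concrete gap in your plan is the connectedness reduction in the second step. You argue that, having promoted $\Res_b$ and $\res_\lambda$ to fibers of local-system-valued functors $\Res$ and $\res$ over $\h^{W'}_{\reg}$ and $\h^{*W'}_{\reg}$ respectively, it suffices to match the two at a single point. But these are two disjoint base spaces sitting inside $\h$ and $\h^*$; there is no common point, and the assertion that "the same braid group with the same parameters" governs both local systems is not automatic --- it is essentially equivalent to the conjecture. Parallel transport gives you only non-canonical isomorphisms $\Res_b\cong\Res_{b'}$ within each picture (the paper itself notes this non-canonicity at the end of Subsection \ref{dep}); it does not furnish a bridge between the two pictures. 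What would close the argument is either your $\dagger$-intertwining computation (showing explicitly, via the formulas \eqref{thetay} for $\theta$ on both sides and the isomorphism $\gamma$ of Subsection \ref{duali}, that the completion-at-$b$ construction dualizes to the completion-at-$\lambda$ construction on the nose) or the degeneration-at-infinity computation; you correctly identify these as the main obstacles, but neither is carried out. In particular, the delicate point you flag --- reconciling the $\bold h$-grading used to define $\dagger$ with the two different directions of formal completion --- is exactly where the difficulty lives, and it cannot be sidestepped by a connectedness or deformation argument alone.

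To be clear about what your Remark \ref{conhol}-style outline would need: an isomorphism of functors valued in $\mathcal{O}_{c'}(W',\h/\h^{W'})_0$ requires compatible isomorphisms of the $H_{c'}(W',\h/\h^{W'})$-module structures, not just of underlying vector spaces or $K$-theory classes. This is precisely the "enough structure to see that the comparison is compatible with the $H_{c'}(W',\h/\h^{W'})$-module structure" that you mention in passing, and it is the entire content of the conjecture.
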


\begin{remark} Conjecture \ref{bl} would imply that 
$\Ind_b$ is left adjoint to $\Res_b$, and 
that $\Res_b$ maps injective objects to injective ones, while
$\Ind_b$ maps projective objects to projective ones. 
\end{remark}

\begin{remark}\label{conhol} If $b$ and $\lambda$ are generic (i.e.,
$W_b=W_\lambda=1$) then the conjecture holds. 
Indeed, in this case the conjecture reduces to showing that 
we have an isomorphism of functors ${\rm Fiber}_b(M)\cong
{\rm Fiber}_\lambda(M^\dagger)^*$ ($M\in {\mathcal O}_c(W,\h)$). 
Since both functors are exact functors
to the category of vector spaces, it suffices to check that 
$\dim {\rm Fiber}_b(M)=\dim {\rm Fiber}_\lambda(M^\dagger)$. 
But this is true because both
dimensions are given by the leading coefficient of the Hilbert
polynomial of $M$ (characterizing the growth of $M$).   
\end{remark}

It is important to mention, however, that
although $\Res_b$ is isomorphic to $\Res_{b'}$ if $W_b=W_{b'}$,
this isomorphism is not canonical. So let us examine the dependence
of $\Res_b$ on $b$ a little more carefully. 

Theorem \ref{compuRI} implies that if $c$ is generic, then 
$$
\Res(M_c(W,\h,\tau))=\oplus_{\xi}M_{c'}(W',\h/\h^{W'},\xi)
\otimes{\mathcal L}_{\tau\xi},
$$
where ${\mathcal L}_{\tau\xi}$ is a local system on $\h^{W'}_{\reg}$ of rank
$n_{\tau\xi}$. Let us characterize the local system 
${\mathcal L}_{\tau\xi}$ explicitly. 

\begin{proposition}\label{tauxi}
The local system ${\mathcal L}_{\tau\xi}$ is given by the
``partial'' KZ connection on
the trivial bundle, with the connection form 
$$
\sum_{s\in S: s\notin W'}\frac{2c_s}{1-\lambda_s}
\frac{d\alpha_s}{\alpha_s}(s-1).
$$
with values in $\Hom_{W'}(\xi,\tau|_{W'})$. 
\end{proposition}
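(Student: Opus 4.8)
The plan is to trace the local system $\mathcal{L}_{\tau\xi}$ through the explicit formula for the family version $\theta$ of the isomorphism of Theorem~\ref{comp}. Since $\Res$ is defined by $\Res(M) = (E\circ I^{-1}\circ\theta_*)(\widehat{M}_{\h^{W'}_{\reg}})$, the $D$-module structure on $\h^{W'}_{\reg}$ that produces $\mathcal{L}_{\tau\xi}$ comes entirely from how the operators $y_a$, for $a\in\h^{W'}$, act after transport through $\theta$. First I would take $M = M_c(W,\h,\tau)$ and work on the generating fiber: over $\h^{W'}_{\reg}$, the sheaf $\widehat{M}_{\h^{W'}_{\reg}}$ has $\tau$ (with its $W$-action and zero $\h$-action) as a natural subspace of sections, and after applying $I^{-1}$ and $E$ we land in $\oplus_\xi M_{c'}(W',\h/\h^{W'},\xi)\otimes\Hom_{W'}(\xi,\tau|_{W'})$ by Proposition~\ref{compuRI}(i). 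The generating subspace of this is exactly $\oplus_\xi \xi\otimes\Hom_{W'}(\xi,\tau|_{W'}) = \tau|_{W'}$, regarded as a $W'$-module, and the fiberwise identification with $\tau$ is the tautological one.

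Next I would extract the connection. The relative version of formula~(\ref{thetay}): for $a\in\h^{W'}$, the operator $y_a$ acts on a section $f\in\mathrm{Fun}_{W'}(W,\widehat{H_{c'}}(W',\h/\h^{W'})_0)\hat\otimes D(\h^{W'}_{\reg})$ by
$$
(\theta(y_a)f)(w) = y_{wa}^{(b)}f(w) + \partial_{wa}f(w) + \sum_{s\in S:\, s\notin W_b}\frac{2c_s}{1-\lambda_s}\frac{\alpha_s(wa)}{x_{\alpha_s}^{(b)}+\alpha_s(b)}(f(sw)-f(w)),
$$
where now $b$ is the coordinate on $\h^{W'}_{\reg}$ and $\partial_{wa}$ is the vector field on $\h^{W'}_{\reg}$ in direction $wa$ (this is the $D(\h^{W'}_{\reg})$ factor). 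On the generating subspace, where $f$ takes values in the degree-zero part (so $y^{(b)}_{wa}f(w)=0$) and where, by $W'$-equivariance, $f(sw)-f(w)$ only contributes through the $W$-action on $\tau$, the middle term gives the flat connection $d$ and the last term restricts — setting $x_{\alpha_s}^{(b)}=0$ on the generating fiber and writing $\alpha_s(wa)\,db$-style as $d\alpha_s$ pulled back to $\h^{W'}_{\reg}$ — to the operator of multiplication by $\sum_{s\notin W'}\frac{2c_s}{1-\lambda_s}\frac{d\alpha_s}{\alpha_s}(s-1)$ acting via $W$ on $\tau|_{W'}$. Thus the connection one-form is exactly the stated partial KZ form, with values in $\End(\tau|_{W'}) = \oplus_{\xi,\xi'}\Hom(\Hom_{W'}(\xi,\tau|_{W'}),\Hom_{W'}(\xi',\tau|_{W'}))\otimes\Hom_{W'}(\xi,\xi')$; restricting to the $\xi=\xi'$ isotypic block (which is where $M_{c'}(W',\h/\h^{W'},\xi)\otimes\mathcal{L}_{\tau\xi}$ lives) gives a connection on the trivial bundle with fiber $\Hom_{W'}(\xi,\tau|_{W'})$, as claimed.

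The main obstacle, and the step requiring genuine care rather than bookkeeping, is justifying that the connection on $\mathcal{L}_{\tau\xi}$ is determined by the action of $\theta(y_a)$ on the generating subspace alone — i.e.\ that passing to $E$ (the $\h$-nilpotent vectors, equivalently the $\bold h$-finite vectors by Proposition~\ref{nilpo}) and projecting to the generating fiber is compatible with the relative $D$-module structure, so that no corrections from higher-order parts of the sections enter the connection form. This is where one uses that $x_{\alpha_s}^{(b)}$ lies in the maximal ideal of $\widehat{\CC[\h/\h^{W'}]}_0$ (hence acts by raising $\bold h$-degree) together with the grading argument from the proof of Theorem~\ref{equi}: on the associated graded, the fraction $\alpha_s(wa)/(x_{\alpha_s}^{(b)}+\alpha_s(b))$ reduces to $\alpha_s(wa)/\alpha_s(b)$, and the degree-$0$ component of $\theta(y_a)$ is precisely $\partial_{wa}$ plus the residue term above. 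One also must check that the trivialization of the bundle $\mathcal{L}_{\tau\xi}$ is the one for which the KZ form is globally defined (single-valued) on $\h^{W'}_{\reg}$; this follows because $\{\alpha_s = 0 : s\notin W'\}$ misses $\h^{W'}_{\reg}$ by definition of the latter, so each $d\alpha_s/\alpha_s$ is a regular logarithmic form there. Finally, the passage from generic $c$ to the identification as stated is already granted by Proposition~\ref{compuRI}(i).
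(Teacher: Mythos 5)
Your proof is correct and takes essentially the same route as the paper, which simply invokes formula~(\ref{thetay}): you unpack the relative version of $\theta$ on the generating fiber, identify the $D(\h^{W'}_{\reg})$ contribution as $d$, and read off the residue terms as the partial KZ one-form. The only point you leave implicit (but which is easy) is that the connection form commutes with $W'$ (since $W'$-conjugation permutes the reflections $s\notin W'$ and fixes $b\in\h^{W'}$), which is what justifies restricting it to the $\xi$-isotypic block $\Hom_{W'}(\xi,\tau|_{W'})$.
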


\begin{proof}
This follows immediately from formula (\ref{thetay}).
\end{proof} 

\subsection{Supports of modules}
 
The following two basic propositions are proved in \cite{Gi},
Section 6. We will give different proofs of them, based on the 
restriction functors. 

\begin{proposition}\label{supp} 
Consider the stratification of $\h$ with respect to stabilizers
of points in $W$. Then the support ${\rm Supp}M$ of any object
$M$ of ${\mathcal
O}_c(W,\h)$ in $\h$ is a union of strata of this stratification.  
\end{proposition}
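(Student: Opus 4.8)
The plan is to prove that $\supp M$ is $W$-invariant and then reduce the local statement near a point $b$ to a support computation after applying $\Res_b$. Since $M$ is a $W$-equivariant sheaf on $\h$ (being an $H_c(W,\h)$-module, hence in particular a $\CC W \ltimes \CC[\h]$-module finitely generated over $\CC[\h]$), its support is automatically a $W$-stable closed subset of $\h$. So the content is that $\supp M$, near any point $b \in \h$ with $W_b = W'$, looks like $\h^{W'}$ times the support of some module in the category $\O_{c'}(W',\h/\h^{W'})_0$ — and the latter, being finitely generated over $\CC[\h/\h^{W'}]$, has a closed support that is automatically conical and $W'$-stable, hence in particular contains a neighborhood of $0$ in $\h^{W'}$ union nothing, or all of $\h/\h^{W'}$, etc.; but the key geometric point is simply that the germ of $\supp M$ at $b$ is a product $\h^{W'} \times (\text{germ at }0)$, which forces it to be a union of strata near $b$.

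Concretely, here is how I would carry it out. First, fix $b \in \h$ and let $W' = W_b$. Using Theorem \ref{comp}, the completion $\widehat{M}_b$ corresponds under $\theta_*$ and the Morita equivalence $I$ of Lemma \ref{le}(i) to a module over $\widehat{H_{c'}}(W',\h)_0$, which via $\zeta$ and $E$ (Theorem \ref{equi}) corresponds to $\Res_b(M) \in \O_{c'}(W',\h/\h^{W'})_0$. The decomposition $\h = \h^{W'} \oplus (\h^{*W'})^\perp$ is compatible with all these identifications, and the upshot is that the germ of the sheaf $M$ at the $W$-orbit of $b$ is, up to the $W$-equivariant bookkeeping encoded in $Z(W,W',-)$, the external product of the constant sheaf along $\h^{W'}$ with the germ at $0$ of (the sheaf associated to) $\Res_b(M)$ along $\h/\h^{W'}$. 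Therefore, set-theoretically, near $b$ one has $\supp M = \h^{W'} \times \supp_{0}(\Res_b(M))$ inside $\h^{W'} \oplus (\h^{*W'})^\perp$ — more precisely, a neighborhood of $b$ in $\supp M$ is the preimage under the projection $\h \to \h/\h^{W'}$ of a neighborhood of $0$ in $\supp \Res_b(M)$.

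From this local product structure the proposition follows. Indeed, a subset $Z \subset \h$ is a union of strata of the stabilizer stratification if and only if it is $W$-stable and, for every $b \in Z$, it contains a neighborhood of $b$ inside the locally closed stratum $\{a : W_a = W_b\}$ through $b$ whenever it meets that stratum — equivalently, $Z$ is $W$-stable and its germ at each $b$ is a union of germs of strata. The local product description shows exactly that: whether or not $b \in \supp M$, the germ of $\supp M$ at $b$ is $\h^{W'}$ times the germ at $0$ of a closed $W'$-stable conical subset of $\h/\h^{W'}$, and such a subset is a union of $W'$-stabilizer strata in $\h/\h^{W'}$ (that it is conical comes from $\CC[\h^*]^{W'}$ acting with generalized eigenvalue $0$, giving a good filtration stable under the Euler grading; that it is $W'$-stable is automatic). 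Transporting back, the germ of $\supp M$ at $b$ is a union of $W$-stabilizer strata, and since this holds at every $b$, $\supp M$ is globally a union of strata.

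The main obstacle is making precise the claim that the germ of the sheaf $M$ at $b$ is an external product of a constant sheaf along $\h^{W'}$ with the germ of $\Res_b(M)$ along $\h/\h^{W'}$ — i.e., that the functor $\Res_b$ (equivalently, $\theta$ together with $\zeta$) really computes the support transversally in the stated way, rather than just abstractly being an equivalence on completed module categories. This requires unwinding the formulas of Theorem \ref{comp}: one checks that $\theta$ intertwines the $\CC[[\h]]_b$-module structure on $\widehat{M}_b$ with the $\CC[[\h^{W'}]]_0 \,\widehat\otimes\, \CC[[\h/\h^{W'}]]_0$-module structure on the centralizer side in a way compatible with the direct-sum decomposition of $\h$, so that the support, computed as the vanishing locus of the annihilator of the completed module over $\CC[[\h]]_b$, decomposes as a product. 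The remaining ingredients — $W$-equivariance of supports, conicity coming from the action of the Euler element $\bold h$ and equation (\ref{sca}), and the elementary fact that conical $W'$-stable closed subsets of $\h/\h^{W'}$ are unions of stabilizer strata — are routine.
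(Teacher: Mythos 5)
Your overall strategy---analyze $\supp M$ locally near a point $b$ via the completion $\widehat{M}_b$ and the isomorphism of Theorem \ref{comp}, reducing to a statement about the support of $\Res_b(M)\in{\mathcal O}_{c'}(W_b,\h/\h^{W_b})_0$---is a genuinely different route from the paper's. The paper instead uses the functor $\Res$ of Subsection 3.7: the family $b\mapsto\Res_b(M)$ carries a flat connection over $\h^{W'}_{\reg}$, and since $\Res_b(M)=0$ iff $\widehat{M}_b=0$ iff $b\notin\supp M$ (all the functors in the definition of $\Res_b$ are equivalences), the condition $b\in\supp M$ is locally constant on each connected stratum $\h^{W'}_{\reg}$. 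Your approach is heavier on bookkeeping but closer in spirit to standard support arguments in $D$-module theory; the paper's is more of a one-liner given the local system structure already established.

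There is, however, a concrete false step in your write-up. You assert that any closed, conical, $W'$-stable subset of $\h/\h^{W'}$ is automatically a union of $W'$-stabilizer strata. This is not true: for $W'=S_3$ acting on its two-dimensional reflection representation, the union of the three $(-1)$-eigenlines of the reflections is closed, conical and $S_3$-stable, but its nonzero points have trivial stabilizer, so it is a proper subset of the open stratum and not a union of strata. Fortunately the claim is also more than the argument needs. To conclude it suffices to show: if $b\in\supp M$, then the whole stratum through $b$ is contained in $\supp M$; and since strata are connected and $\supp M$ is closed, it is enough to catch a neighborhood of $b$ inside the stratum. Your local product description $\supp M=\h^{W'}\times\supp\Res_b(M)$ near $b$ gives exactly this, provided $0\in\supp\Res_b(M)$ whenever $\Res_b(M)\ne 0$---and that is precisely what conicity delivers, nothing more. (If you really want the stronger local statement that the germ of $\supp M$ at $b$ is a union of germs of strata, replace the false claim by an appeal to the proposition itself by induction on $\dim\h$; the dimension drops strictly because $b\in\h^{W_b}$ and $b\ne 0$, and the case $b=0$ with $\h^W=0$ is vacuous since $\{0\}$ is then a stratum.)
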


\begin{proof}
This follows immediately from the existence of the flat connection 
along the set of points $b$ with a fixed stabilizer $W'$ on the bundle
$\Res_b(M)$. 
\end{proof} 

\begin{proposition}\label{supp1}
For any irreducible object $M$ in 
${\mathcal O}_c(W,\h)$, ${\rm Supp}M/W$ is an irreducible algebraic variety. 
\end{proposition}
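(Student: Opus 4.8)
The plan is to deduce this from Proposition~\ref{supp} together with a support estimate for the parabolic induction functor $\Ind_b$. First I would reduce to the case $M\in\mathcal O_c(W,\h)_0$: an irreducible $M$ lies in a single block $\mathcal O_c(W,\h)_\lambda$, and Corollary~\ref{whit} (the equivalence $\psi_\lambda$) reduces the problem for $\mathcal O_c(W,\h)_\lambda$ to the case $\lambda=0$ for the group $W_\lambda$. Assuming $M\in\mathcal O_c(W,\h)_0$, by Proposition~\ref{supp} the set $\text{Supp}(M)$ is a closed $W$-stable union of strata, so each of its irreducible components is the closure of a stratum, hence a linear subspace of the form $w\h^{W'}$. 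By $W$-equivariance I may assume that one such component is $\h^{W_1}$, and I would then choose a point $b\in\h^{W_1}_{\reg}$ lying on no other component of $\text{Supp}(M)$ --- possible since each other component meets the linear space $\h^{W_1}$ in a proper closed subset, while $\h^{W_1}_{\reg}$ is open dense in $\h^{W_1}$. Then $W_b=W_1$, and near $b$ the set $\text{Supp}(M)$ coincides with the smooth subvariety $\h^{W_1}$.

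Next I would record two consequences of this local picture. First, $N:=\Res_b(M)$ is finite-dimensional: under the isomorphism $\theta$ of Theorem~\ref{comp} (which identifies a formal neighbourhood of $b$ with one of $0$ via a translation preserving $\h^{W_1}$) and the equivalence $I^{-1}$, the completion $\widehat M_b$ corresponds to a module over $\widehat{H_{c'}}(W_1,\h)_0$ supported near $0$ on $\h^{W_1}$; applying $E$ and then $\zeta$, which kills the $\h^{W_1}$-directions, leaves a module over $H_{c'}(W_1,\h/\h^{W_1})$ supported at the origin, i.e.\ finite-dimensional. Moreover $N\neq 0$ since $b\in\text{Supp}(M)$. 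Second, since $M$ is irreducible, the unit $M\to\Ind_b\Res_b(M)=E^b(\widehat M_b)$ of the adjunction of Theorem~\ref{adj2} (using Proposition~\ref{propres}(ii)) is injective: its kernel is the submodule of sections of $M$ vanishing near the orbit of $b$, which is proper because $b\in\text{Supp}(M)$, hence zero. Thus $M$ embeds into $\Ind_b(N)$ with $N$ finite-dimensional, so $\text{Supp}(M)\subseteq\text{Supp}(\Ind_b(N))$.

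The crux is then the estimate $\text{Supp}(\Ind_b(N))\subseteq W\cdot\h^{W_1}$ for finite-dimensional $N$. I would write $\Ind_b(N)=E^b(N')$ with $N'=\theta_*^{-1}I(\widehat{\zeta^{-1}(N)}_0)$. Because $N$ is finite-dimensional, $\zeta^{-1}(N)$ --- and hence its completion at $0$ --- is annihilated by $I_{\h^{W_1}}^{k}$ for some $k$, where $I_{\h^{W_1}}\subseteq\CC[\h]$ is the ideal of the subspace $\h^{W_1}$. Feeding this through the explicit formulas of Theorem~\ref{comp}, one checks that for $p$ in the ideal $I_{W\cdot\h^{W_1}}^{k}$ of $\bigcup_{w\in W}w\h^{W_1}$ the operator $\theta(p)$ acts on $N'=\text{Fun}_{W_1}(W,\widehat{\zeta^{-1}(N)}_0)$ in each $w$-component by multiplication by a function lying in $I_{\h^{W_1}}^{k}$, hence by zero; so $I_{W\cdot\h^{W_1}}^{k}$ annihilates $N'$, and a fortiori its $\CC[\h]$-submodule $E^b(N')=\Ind_b(N)$. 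As $\Ind_b(N)$ is finitely generated over $\CC[\h]$, its support lies in $V(I_{W\cdot\h^{W_1}}^{k})=W\cdot\h^{W_1}$. Combining with the embedding above gives $\text{Supp}(M)\subseteq W\cdot\h^{W_1}$; since conversely $\h^{W_1}\subseteq\text{Supp}(M)$ and $\text{Supp}(M)$ is $W$-stable, $\text{Supp}(M)=W\cdot\h^{W_1}$, whose image in $\h/W$ --- the image of the irreducible variety $\h^{W_1}$ under the finite map $\h\to\h/W$ --- is irreducible.

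The step I expect to be the real work is the support estimate for $\Ind_b$: it requires carrying the defining annihilator (equivalently, the order-of-vanishing filtration) through the equivalence $I$ and, above all, through the explicit isomorphism $\theta$ of Theorem~\ref{comp}, and checking that passing to $\h$-nilpotent vectors via $E^b$ does not enlarge the support. Once that is in hand, the rest of the argument is essentially formal, relying only on Proposition~\ref{supp} and the adjunction of Theorem~\ref{adj2}.
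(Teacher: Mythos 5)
Your argument is correct in its essentials, but it takes a genuinely different and substantially longer route than the paper. The paper proves Proposition~\ref{supp1} by a short, self-contained commutative-algebra argument that does not use Proposition~\ref{supp} or any of the restriction/induction machinery: pick a component $X$ of ${\rm Supp}\,M/W$, let $M'\subset M$ be the set of sections whose germ at a generic point of $X$ vanishes, observe that $M'$ is a proper $H_c(W,\h)$-submodule (since Dunkl operators, $W$, and $\CC[\h]$ are all local, and the germ is taken at a $W$-orbit) and hence $M'=0$ by irreducibility; then, for any $f\in\CC[\h]^W$ vanishing on $X$, finite generation of $M$ over $\CC[\h]$ gives a uniform $N$ with $f^NM\subset M'=0$, so ${\rm Supp}\,M/W\subset V(f)$, forcing ${\rm Supp}\,M/W=X$. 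This works uniformly for all blocks ${\mathcal O}_c(W,\h)_\lambda$ without the preliminary $\psi_\lambda$-reduction.

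Your proof, by contrast, first invokes Proposition~\ref{supp} to identify the components as spaces $w\h^{W_1}$, then pushes $M$ through $\Res_b$/$\Ind_b$ and proves a support bound for $\Ind_b$ of a finite-dimensional module by tracing the ideal $I_{W\cdot\h^{W_1}}^k$ through the explicit formula for $\theta$ in Theorem~\ref{comp}. This is correct (I checked: the $w$-component of $\theta(p)$ is multiplication by $y\mapsto p(w^{-1}(y+b))$, which lies in the $k$-th power of the ideal of $\h^{W_1}$ precisely when $p$ vanishes to order $k$ on $w^{-1}\h^{W_1}$, and $\widehat{\zeta^{-1}(N)}_0\cong N\otimes\CC[[\h^{W_1}]]$ is annihilated by such functions), and in effect you are proving the "$\subset$" half of Proposition~\ref{supp2}(ii) en route. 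What your approach buys is a more structural picture — it shows explicitly that an irreducible $M$ with support containing $\h^{W_1}$ embeds in $\Ind_b$ of a finite-dimensional module, which is close to the content of Proposition~\ref{supp2} — but at the cost of depending on \ref{supp}, the adjunction Theorem~\ref{adj2}, and a fairly delicate computation with $\theta$ that the paper's two-line annihilator argument sidesteps entirely. Since in the paper Proposition~\ref{supp2} is derived *from* \ref{supp} and \ref{supp1}, one should be aware that your route reverses that logical order, so you must (and do) supply the support bound for $\Ind_b$ from scratch rather than citing \ref{supp2}.
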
 

\begin{proof}
Let $X$ be a component of ${\rm Supp}M/W$. 
Let $M'$ be the subspace of elements of $M$ whose 
restriction to a neighborhood of a generic point 
of $X$ is zero. It is obvious that $M'$ is an 
$H_c(W,\h)$-submodule in $M$. By definition, it is a proper
submodule. Therefore, by the irreducibility of $M$, we have
$M'=0$. Now let $f\in \Bbb C[\h]^W$ be a function that vanishes
on $X$. Then there exists a positive integer 
$N$ such that $f^N$ maps $M$ to $M'$, hence acts by zero 
on $M$. This implies that
${\rm Supp}M/W=X$, as desired. 
\end{proof} 

Propositions \ref{supp} and \ref{supp1} allow us to attach 
to every irreducible module $M\in {\mathcal
O}_c(W,\h)$, a conjugacy class of parabolic subgroups, $C_M\in
Par(W)$, namely, the conjugacy class of the 
stabilizer of a generic point of the support
of $M$. Also, for a parabolic subgroup $W'\subset W$, 
denote by ${\mathcal S}(W')$ the set of points $b\in \h$ whose
stabilizer contains a subgroup conjugate to $W'$.

The following proposition is immediate. 

\begin{proposition}\label{supp2} 
(i) Let $M\in {\mathcal
O}_c(W,\h)_0$ be irreducible. If $b$ is such that $W_b\in C_M$,
then $\Res_b(M)$ is a nonzero finite dimensional module over
$H_{c'}(W_b,\h/\h^{W_b})$.  
 
(ii) Conversely, let $b\in \h$, and $L$ be a finite dimensional
module $H_c(W_b,\h/\h^{W_b})$. 
Then the support of $\Ind_b(L)$ in $\h$ 
is ${\mathcal S}(W_b)$. 
\end{proposition}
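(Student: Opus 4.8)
The plan is to prove the two parts by exploiting the adjointness and exactness of $\Res_b$ and $\Ind_b$ together with the support results already established, keeping everything at the level of formal completions where $\theta_*$ and $I$ do the bookkeeping.

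For part (i), first I would note that, since $M\in{\mathcal O}_c(W,\h)_0$ is finitely generated over $\CC[\h]$, the module $\Res_b(M)$ is finitely generated over $\CC[\h/\h^{W_b}]$, hence its support in $\h/\h^{W_b}$ is a closed subvariety; it is nonzero exactly when that support contains the point corresponding to $b$ (which, after the identification, is $0$). I would argue that $\Res_b(M)$ is finite dimensional precisely because $b$ is a \emph{generic} point of $\Supp M$: by Proposition~\ref{supp}, $\Supp M$ is a union of strata, and $W_b\in C_M$ means $b$ lies on the open stratum of $\Supp M$ (within the locus of points with stabilizer conjugate to $W_b$), so in a formal neighborhood of $b$ the module $\widehat M_b$ is supported set-theoretically only on the $W_b$-orbit of $b$ — i.e. on the "vertical" directions $\h^{W_b}$ collapse to a point. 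Concretely, the restriction of $\Supp M$ to the formal neighborhood $\widehat{\h}_b$ meets the slice $\h/\h^{W_b}$ only at $0$, so $\Res_b(M)$, being a module over $H_{c'}(W_b,\h/\h^{W_b})$ in category ${\mathcal O}$ whose support in $\h/\h^{W_b}$ is $\{0\}$, is finite dimensional (a standard fact: an object of ${\mathcal O}_{c'}$ supported at $0$ is finite dimensional). Nonvanishing: if $\Res_b(M)=0$ then $\widehat M_b=0$ by the faithfulness built into $\theta_*$, $I$ and the fact that $E$ detects everything on a completion (Theorem~\ref{equi} applied on the $W_b$-factor), contradicting $b\in\Supp M$.

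For part (ii), I would compute the support of $\Ind_b(L)$ directly from the construction $\Ind_b(N)=(E^b\circ\theta_*^{-1}\circ I)(\widehat{\zeta^{-1}(N)}_0)$. The key observation is that $I(\widehat{L})=\Fun_{W_b}(W,\widehat L)$ is supported, as a module over $\widehat{H_{c'}}(W_b,\h)_0$, only at $0$ (since $L$ is finite dimensional, hence so is $\widehat L$, hence it is $\h$-torsion); transporting through $\theta_*^{-1}$ and noting that $\theta$ matches functions on the formal neighborhood of $b$ in $\h/W$ with functions on the formal neighborhood of $0$ in $\h/W_b$ \emph{tensored over the $\h^{W_b}$-directions}, the resulting $\widehat{H_c}(W,\h)_b$-module is set-theoretically supported on all of $\h^{W_b}$ near $b$, i.e. on the stratum $\h^{W_b}_{\reg}$ near $b$. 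Applying $E^b$ and then spreading $b$ over $\h^{W_b}_{\reg}$ (using the relative version $\Res$ / $\Ind$ of Section~\ref{dep}, which gives a $D$-module along $\h^{W_b}_{\reg}$), the support of $\Ind_b(L)$ contains $\h^{W_b}_{\reg}$ and, being a union of strata by Proposition~\ref{supp} and closed, contains its closure; the closure of the stratum of points with stabilizer exactly $W_b$ is precisely ${\mathcal S}(W_b)$. For the reverse inclusion, I would use that $\Res_{b'}(\Ind_b(L))$ vanishes whenever $W_{b'}$ is not subconjugate to $W_b$: indeed by adjointness (Theorem~\ref{adj2}) and transitivity of parabolic restriction (which, if not yet available, follows from the geometric description, or one can argue directly that the formal completion of $\Ind_b(L)$ at such a $b'$ is zero because $\Ind_b(L)$ is built from a sheaf supported on ${\mathcal S}(W_b)$), so no stratum outside ${\mathcal S}(W_b)$ lies in the support.

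The main obstacle I expect is making the support computation in part (ii) rigorous: one has to track carefully how $\theta$ behaves with respect to the factorization $\h=\h^{W_b}\oplus(\h^{*W_b})^\perp$ and verify that $I(\widehat L)$, which is $\h$-torsion on the $(\h^{*W_b})^\perp$-directions, becomes, after $\theta_*^{-1}$, a module whose support in $\h$ is the full affine subspace $b+\h^{W_b}$ locally — equivalently that $\Ind_b$ does not shrink the support in the $\h^{W_b}$-directions. This is where the relative construction of Section~\ref{dep}, giving an honest $\O$-coherent $D$-module along $\h^{W'}_{\reg}$, is essential: flatness of that local system along $\h^{W'}_{\reg}$ forces the support to be $N_W(W')/W'$-invariant and to fill out the whole stratum, after which the closure and the stratification property of Proposition~\ref{supp} finish the argument.
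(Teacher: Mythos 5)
The paper offers no proof (it declares the proposition ``immediate''), so there is no argument to compare against; I will only assess your own. Your treatment of part (i) is essentially right: $W_b\in C_M$ together with Propositions \ref{supp} and \ref{supp1} places $b$ on the open stratum of $\Supp M$, so locally the support lies in $\h^{W_b}$, forcing $\Res_b(M)$ to have support $\{0\}$ in $\h/\h^{W_b}$ and hence be finite-dimensional, and nonvanishing follows since $\zeta$, $E$, $I^{-1}$, $\theta_*$ are all equivalences so $\Res_b(M)=0$ would force $\widehat{M}_b=0$. (A small inaccuracy: in (ii) you say $\widehat{\zeta^{-1}(L)}_0$ is ``supported only at $0$''; it is $\CC[[\h^{W_b}]]\,\hat\otimes\, L$, supported on the formal neighborhood of $\h^{W_b}$, which you do acknowledge in the next sentence.)

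In part (ii) there are two genuine problems. For the inclusion ${\mathcal S}(W_b)\subset \Supp(\Ind_b(L))$, your flatness-over-$\h^{W_b}_{\reg}$ argument only propagates membership in the support along a stratum; it has no base case. You still have to show that $\Ind_b(L)\ne 0$ and in fact that $b\in\Supp(\Ind_b(L))$, equivalently $\Res_b(\Ind_b(L))\ne 0$, and this is not a formal consequence of the setup since $E^b$ (unlike $E^0$) is only right adjoint to completion, not an inverse equivalence. One way to close this is to use Proposition \ref{compuRI}(ii): in the Grothendieck group, $[\Ind_b(L)]$ is the image of $[L]$ under parabolic induction $R(W_b)\to R(W)$ on virtual characters, which is injective for parabolic subgroups (this uses that $W$-conjugate elements of a parabolic are already conjugate in the parabolic); thus $\Ind_b(L)\ne 0$, and then by Theorem \ref{adj2}, $\id_{\Ind_b(L)}$ corresponds to a nonzero element of $\Hom(\Res_b\Ind_b(L),L)$, so $\Res_b\Ind_b(L)\ne 0$ and $b\in\Supp$. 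For the reverse inclusion $\Supp(\Ind_b(L))\subset{\mathcal S}(W_b)$, your stated vanishing condition is backwards: you want $\Res_{b'}(\Ind_b(L))=0$ when $W_b$ is \emph{not} subconjugate to $W_{b'}$ (i.e.\ $b'\notin{\mathcal S}(W_b)$), not when $W_{b'}$ is not subconjugate to $W_b$ (the latter would wrongly predict $\Res_0(\Ind_b(L))=0$). Moreover, ``transitivity of parabolic restriction'' is not established anywhere in the paper, so invoking it is not available; your fallback is the right idea, but to make it rigorous one should argue that $\Ind_b(L)=E^b(N)\subset N$ where $N=\theta_*^{-1}(I(\widehat{\zeta^{-1}(L)}_0))$, and that any $f\in\CC[\h]^W$ vanishing on ${\mathcal S}(W_b)$ has a power annihilating $N$ (visible from the construction, since $L$ is killed by a power of the augmentation ideal of $\CC[\h/\h^{W_b}]$), hence annihilating $\Ind_b(L)$, giving $\Supp(\Ind_b(L))\subset{\mathcal S}(W_b)$.
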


Let $FD(W,\h)$ be the set of $c$ for which $H_c(W,\h)$ admits a
finite dimensional representation. 

\begin{corollary}\label{supp3} 
(i) Let $W'$ be a parabolic subgroup of $W$.   
Then ${\mathcal S}(W')$ is the support of some irreducible representation 
from ${\mathcal O}_c(W,\h)_0$ if and only if $c'\in
FD(W',\h/\h^{W'})$.  

(ii) Suppose that $W$ is a Coxeter group. 
Then the category ${\mathcal O}_c(W,\h)_0$ is semisimple 
if and only if $c\notin \cup_{W'\in {\rm Par}(W)}FD(W',\h/\h^{W'})$.
\end{corollary}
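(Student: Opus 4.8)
The plan is to deduce both parts from Proposition \ref{supp2} together with the support-stratification statements (Propositions \ref{supp}, \ref{supp1}, \ref{supp2}) and the exactness of $\Res_b$ and $\Ind_b$. For part (i): if ${\mathcal S}(W')$ is the support of an irreducible $M\in{\mathcal O}_c(W,\h)_0$, then a generic point $b$ of ${\mathcal S}(W')$ has stabilizer $W_b$ conjugate to $W'$, so $W_b\in C_M$, and Proposition \ref{supp2}(i) gives that $\Res_b(M)$ is a nonzero finite dimensional $H_{c'}(W_b,\h/\h^{W_b})$-module; since $W_b$ is conjugate to $W'$ this shows $c'\in FD(W',\h/\h^{W'})$. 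Conversely, if $c'\in FD(W',\h/\h^{W'})$, pick a finite dimensional $H_{c'}(W',\h/\h^{W'})$-module $L$ and a point $b$ with $W_b=W'$; by Proposition \ref{supp2}(ii) the support of $\Ind_b(L)$ is exactly ${\mathcal S}(W')$. Then $\Ind_b(L)$ has a finite composition series (it lies in category ${\mathcal O}$), and by Proposition \ref{supp1} the support of each irreducible subquotient is irreducible; since supports of subquotients are closed subsets whose union is ${\mathcal S}(W')$ (an irreducible variety, being the closure of the $W$-orbit of the linear subspace $\h^{W'}$), at least one irreducible subquotient must have support equal to all of ${\mathcal S}(W')$. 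This gives the desired irreducible representation.

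For part (ii), assume $W$ is a Coxeter group. The category ${\mathcal O}_c(W,\h)_0$ is semisimple if and only if every standard module $M_c(W,\h,\tau)$ is irreducible, equivalently if and only if $M_c(W,\h,\tau)=L_c(W,\h,\tau)$ for all $\tau$; in that case every irreducible object is a standard module, whose support is all of $\h$ (standard modules are free over $\CC[\h]$). Conversely, if ${\mathcal O}_c(W,\h)_0$ is not semisimple, some $L_c(W,\h,\tau)$ is a proper quotient of $M_c(W,\h,\tau)$, hence its support is a proper closed $W$-stable subset of $\h$; by Proposition \ref{supp} this support is a union of strata, so its generic point has nontrivial stabilizer $W'\neq 1$, and then by part (i), $c'\in FD(W',\h/\h^{W'})$, so $c\in\cup_{W'\in{\rm Par}(W)}FD(W',\h/\h^{W'})$. (Here one uses that the restriction of $c$ to $W'$ determines, and is determined by, the relevant data; for the purposes of the union over ${\rm Par}(W)$ it suffices that $c'$ lies in $FD(W',\h/\h^{W'})$.) For the reverse implication in (ii): if $c\in FD(W',\h/\h^{W'})$ for some nontrivial parabolic $W'$, then by part (i) there is an irreducible $M\in{\mathcal O}_c(W,\h)_0$ with support ${\mathcal S}(W')\subsetneq\h$; such an $M$ cannot be a standard module (whose support is all of $\h$), so ${\mathcal O}_c(W,\h)_0$ is not semisimple — here I use that semisimplicity of ${\mathcal O}_c(W,\h)_0$ forces all irreducibles to be standard.

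The main obstacle I anticipate is the converse direction of (i) — producing an irreducible module whose support is \emph{exactly} ${\mathcal S}(W')$ rather than a proper closed subset. The argument above handles this by the irreducibility of ${\mathcal S}(W')$ as a variety (Proposition \ref{supp1}-type reasoning) combined with the fact that $\Ind_b(L)$ has support precisely ${\mathcal S}(W')$ and finite length: among the finitely many irreducible subquotients, their supports are closed, $W$-stable, and cover the irreducible variety ${\mathcal S}(W')$, forcing one of them to be dense and hence (being closed) equal to ${\mathcal S}(W')$. One subtlety to check carefully is that $FD(W',\h/\h^{W'})$ as used in the statement of the union in (ii) really records membership for the restricted parameter $c'$, and that conjugate parabolics give the same condition, so that the union over conjugacy classes ${\rm Par}(W)$ is well-posed; this is immediate from $W$-invariance of $c$ and the isomorphism $H_{c'}(W',\h/\h^{W'})\cong H_{wc'w^{-1}}(wW'w^{-1},\h/\h^{wW'w^{-1}})$ for $w\in W$.
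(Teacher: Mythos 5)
Your treatment of part (i) follows the paper's intended route (it is exactly what the paper calls ``immediate from Proposition \ref{supp2}''), and the idea of looking at irreducible subquotients of $\Ind_b(L)$ is the right way to fill in the converse direction. One small error: ${\mathcal S}(W')=W\cdot\h^{W'}$ is a finite union of linear subspaces and is \emph{not} irreducible in $\h$ in general; what is irreducible is its image in $\h/W$ (this is the image of the irreducible variety $\h^{W'}$, and it is exactly the form in which Proposition \ref{supp1} is stated). Your pigeonhole argument goes through once you run it in $\h/W$: the supports of the irreducible subquotients are $W$-stable closed subsets of ${\mathcal S}(W')$, their images cover the irreducible set ${\mathcal S}(W')/W$, so one of them must be all of it, and $W$-stability lifts the equality back to $\h$.

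The forward implication of part (ii) has a real gap. You claim that if ${\mathcal O}_c(W,\h)_0$ is not semisimple then some $L_c(W,\h,\tau)$ is a proper quotient of $M_c(W,\h,\tau)$, ``hence its support is a proper closed $W$-stable subset of $\h$.'' The first half is fine, but the ``hence'' does not follow: a proper quotient of the free $\CC[\h]$-module $M_c(W,\h,\tau)$ can perfectly well have full support (generically, if the submodule you quotient by has smaller $\CC[\h]$-rank, the quotient still has positive rank). The statement that non-semisimplicity of ${\mathcal O}_c(W,\h)_0$ forces the existence of a nonzero object with proper support is precisely the nontrivial input from \cite{DJO}, \cite{Gy}, and \cite{GGOR} that the paper cites: roughly, if every simple $L_c(\tau)$ had full support then the KZ functor would send them to $|\widehat W|$ pairwise non-isomorphic simple $\mathcal{H}_q(W)$-modules, forcing $\mathcal{H}_q(W)$ to be semisimple (Gyoja/Tits deformation), which by \cite{GGOR} forces ${\mathcal O}_c(W,\h)_0$ to be semisimple. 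That chain of implications is genuinely needed here, and is exactly why the Coxeter hypothesis appears in part (ii); your argument as written does not use it and cannot be correct as stated. The reverse implication of (ii) you give is fine, and you correctly restrict to nontrivial $W'$ there.
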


\begin{proof}
(i) is immediate from Proposition \ref{supp2}, and 
(ii) follows from (i), since by the combination of results from 
\cite{DJO},\cite{Gy}, and \cite{GGOR}, the category ${\mathcal O}_c(W,\h)_0$ 
is not semisimple if and only if 
there exists a nonzero representation in ${\mathcal O}_c(W,\h)_0$
whose support is not equal to $\h$. 
\end{proof} 

\begin{example}
Let $W=S_n$, $\h=\Bbb C^{n-1}$.  In this case, the set ${\rm Par}(W)$ is 
the set of partitions of $n$. Assume that $c=r/m$, $(r,m)=1$, 
$2\le m\le n$. By a result of \cite{BEG2},
finite dimensional representations of $H_c(W,\h)$ exist 
if and only if $m=n$. Thus the only possible classes $C_M$
for irreducible modules $M$ have stabilizers $S_m\times...\times
S_m$, i.e., correspond to partitions into parts, 
where each part is equal to $m$ or $1$. So there are 
$[n/m]+1$ possible supports for modules, 
where $[a]$ denotes the integer part of $a$.    
\end{example} 

\subsection{Cuspidal numbers} 

Let $W$ be a real  
reflection group, $\h$ its reflection representation.
Let us say that a function $c$ is {\it singular}
if the category ${\mathcal O}_c(W,\h)_0$ is not semisimple. 
It follows from \cite{GGOR,Gy,DJO} that if $c$ is constant and 
$c>0$ then $c$ is singular if and only if 
the polynomial representation $M_c(W,\h,\Bbb C)$ is
reducible. The paper \cite{DJO} determines the
set of singular values. 
In particular, it is shown in \cite{DJO} that constant $c>0$ 
is singular if and only if $c\in \Bbb Q$, and the denominator 
of $c$ divides one of the degrees $d_i$ of $W$.  

In this subsection we assume that $c$ is a constant function. 
Let ${\rm Div}(W,\h)$ be the set of all divisors 
of the degrees $d_i$ of $W$. 
 
Let us say that $d$ is a {\it cuspidal number} for $W$ 
if $d\in {\rm Div}(W,\h)$, but $d\notin {\rm Div}(W',\h)$
for any proper parabolic subgroup $W'\subset W$. 
Thus, constant $c$ with denominator being a cuspidal number 
is a special kind of singular values. 

\begin{proposition}\label{equicon}
The following two conditions on $c$ are equivalent:

(a) The category ${\mathcal O}_c(W,\h)_0$ is not semisimple, 
but any representation $M\in {\mathcal O}_c(W,\h)_0$ 
is either finite dimensional or has full support in $\h$. 

(b) the denominator of $c$, when written as an irreducible
fraction, is a cuspidal number of $W$.  
\end{proposition}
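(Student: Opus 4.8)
The plan is to reformulate both conditions entirely in terms of which parabolic subgroups $W'\subseteq W$ admit a finite-dimensional representation of the Cherednik algebra of $(W',\h/\h^{W'})$ at our parameter, and then to convert these into divisibility relations between the degrees of the parabolics and the denominator of $c$, using the description of singular values from \cite{DJO} together with Corollary \ref{supp3}. Two preliminary remarks: since $c$ is constant, its restriction $c'$ to the reflections of any parabolic is again this same constant; and by the sign-twist isomorphism $H_c(W,\h)\cong H_{-c}(W,\h)$ (valid for real $W$, twisting $w\mapsto\varepsilon(w)w$ and fixing $\h,\h^*$) we may assume $c>0$, the case $c=0$ being trivially semisimple.

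\emph{Step 1 (rephrasing (a)).} For $M\in{\mathcal O}_c(W,\h)_0$ one has ${\rm Supp}\,M=\bigcup_i{\rm Supp}\,L_i$ over the composition factors $L_i$. Since $M$ is finitely generated over $\CC[\h]$, one has ${\rm Supp}\,M=\{0\}$ if and only if $M$ is finite dimensional; the ``if'' uses that on a finite-dimensional object of ${\mathcal O}_c(W,\h)_0$ the positive-degree part of $\CC[\h]$ acts nilpotently, because by (\ref{sca}) it raises the (bounded) real part of the generalized ${\bold h}$-eigenvalue. On the other hand ${\rm Supp}\,M=\h$ exactly when the generic stabilizer along ${\rm Supp}\,M$ is trivial, while for \emph{irreducible} $M$ one has ${\rm Supp}\,M={\mathcal S}(W')$ with $W'$ a representative of $C_M$ (Propositions \ref{supp}, \ref{supp1}, \ref{supp2}). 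Hence condition (a) is equivalent to: ${\mathcal O}_c(W,\h)_0$ is not semisimple, and no irreducible $M$ has $C_M$ represented by a parabolic $W'$ with $1\subsetneq W'\subsetneq W$.

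\emph{Step 2 (rephrasing via $FD$).} Corollary \ref{supp3}(i) says ${\mathcal S}(W')$ is the support of some irreducible object of ${\mathcal O}_c(W,\h)_0$ if and only if $c\in FD(W',\h/\h^{W'})$, and then that irreducible has $C_M$ represented by $W'$ (Step 1); Corollary \ref{supp3}(ii) says ${\mathcal O}_c(W,\h)_0$ is not semisimple if and only if $c\in FD(W'',\h/\h^{W''})$ for some parabolic $W''\ne 1$ (the trivial parabolic being irrelevant, as ${\mathcal S}(1)=\h$). Combining with Step 1, condition (a) becomes equivalent to
$$(\sharp)\qquad c\in FD(W,\h)\ \text{ and }\ c\notin FD(W',\h/\h^{W'})\ \text{ for every parabolic }1\subsetneq W'\subsetneq W.$$
Indeed the second clause removes all intermediate supports, and then non-semisimplicity, with every proper nontrivial parabolic excluded, forces a finite-dimensional representation for $W$ itself by Corollary \ref{supp3}(ii); conversely $c\in FD(W,\h)$ already makes ${\mathcal O}_c(W,\h)_0$ non-semisimple.

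\emph{Step 3 (passing to divisors), and the main obstacle.} Let $d$ be the denominator of $c$ in lowest terms. If $d=1$, both sides fail: (b) because $1\in{\rm Div}(W',\h)$ for the trivial proper parabolic $W'=1$, so $1$ is never a cuspidal number, and $(\sharp)$ because an integral constant parameter is non-singular, hence admits no finite-dimensional representation, by \cite{DJO}; so assume $d\ge 2$. I would then use three inputs: (1) by \cite{DJO} (with \cite{Gy,GGOR}), for any parabolic $W'$ the category ${\mathcal O}_c(W',\h/\h^{W'})_0$ is non-semisimple if and only if $d\in{\rm Div}(W',\h)$; (2) a finite-dimensional representation forces non-semisimplicity, so $c\in FD(W',\h/\h^{W'})$ implies $d\in{\rm Div}(W',\h)$; and (3) a parabolic subgroup of a parabolic subgroup of $W$ is again parabolic in $W$, so Corollary \ref{supp3}(ii) applied to $W'$ shows that if ${\mathcal O}_c(W',\h/\h^{W'})_0$ is not semisimple then $c\in FD(W'',\h/\h^{W''})$ for some parabolic $1\ne W''\subseteq W'$. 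With these, $(\sharp)$ is equivalent to: $d\in{\rm Div}(W,\h)$, and $d\notin{\rm Div}(W',\h)$ for every proper parabolic $W'\subsetneq W$, which is precisely (b). If (b) holds, then (2) gives the second clause of $(\sharp)$ and, $d\in{\rm Div}(W,\h)$ giving non-semisimplicity, Corollary \ref{supp3}(ii) together with the second clause forces the first clause. Conversely, if $(\sharp)$ holds then (2) gives $d\in{\rm Div}(W,\h)$; and if $d\in{\rm Div}(W'_0,\h)$ for some proper parabolic $W'_0$ (necessarily $\ne1$, as $d\ge2$), then (1) and (3) produce a proper nontrivial parabolic $W''$ with $c\in FD(W'',\h/\h^{W''})$, contradicting the second clause. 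The essential difficulty is exactly fact (3): the sets $FD(W',\h/\h^{W'})$ and ${\rm Div}(W',\h)$ are genuinely different — non-semisimplicity of a parabolic's category is far weaker than the existence of a finite-dimensional module — so one cannot obtain $(\sharp)$ from (b) by merely replacing ${\rm Div}$ by $FD$. What rescues the equivalence is that non-semisimplicity at a proper parabolic always descends, via Corollary \ref{supp3}(ii) and transitivity of parabolicity, to a genuine finite-dimensional representation of a (possibly smaller) proper parabolic; everything else is routine bookkeeping with the support results recalled above.
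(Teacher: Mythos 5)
Your proof is correct and follows the same route as the paper's: translate condition (a) into statements about supports via Propositions \ref{supp}--\ref{supp2} and Corollary \ref{supp3}, then convert to divisibility of degrees using the \cite{DJO} classification. The paper's own proof is considerably terser -- it simply asserts ``by Corollary \ref{supp3}, condition (a) holds if and only if the denominator of $c$ divides a degree of $W$ but not a degree of a proper parabolic'' -- and thereby silently elides the one nontrivial point that you correctly isolate and resolve, namely that for a proper parabolic $W'$ the condition $d\in{\rm Div}(W',\h)$ is a priori weaker than $c'\in FD(W',\h/\h^{W'})$; your use of Corollary \ref{supp3}(ii) applied internally to $W'$, together with transitivity of the parabolic relation, is exactly what is needed to close this gap, so your write-up is if anything a more complete version of the intended argument.
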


\begin{proof} As we have mentioned, ${\mathcal O}_c(W,\h)_0$
is not semisimple iff the denominator of $c$ divides a degree of
$d_i$ of $W$. Thus, by Corollary \ref{supp3}, condition (a) holds if and
only if the denominator of $c$ divides a degree of $W$, but does
not divide a degree of a proper parabolic subgroup, which proves
the proposition.   
\end{proof}

A basic example of a cuspidal number for any irreducible $W$ is 
the Coxeter number $h$ of $W$, since it is greater than any of
the degrees for parabolic subgroups. Let us call any other 
cuspidal number non-Coxeter, and denote the set of such numbers
$NC(W)$.  

The non-Coxeter cuspidal numbers are found by inspecting tables.  
Let us enumerate them. Classical Weyl groups (of type A,B=C,D)
do not have non-Coxeter cuspidal numbers. Here are the
non-Coxeter cuspidal numbers for other irreducible Coxeter
groups: 
$$
NC(E6)=\lbrace{9\rbrace}, NC(E7)=\lbrace{14\rbrace},
NC(E8)=\lbrace{15,20,24\rbrace}, NC(F4)=\lbrace{8\rbrace},
$$
$$
NC(I(m))=\lbrace{2<d<m: m/d\in \Bbb Z\rbrace},
NC(H3)=\lbrace{6\rbrace}, NC(H4)=\lbrace{12,15,20\rbrace}.
$$

\begin{corollary}\label{fd}
Suppose that $c>0$ and the denominator of $c$, when written as an irreducible
fraction, is a cuspidal number of $W$.
Then the representation $L_c(W,\h,\CC)$ is finite dimensional.
\end{corollary}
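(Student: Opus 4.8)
The plan is to combine Proposition~\ref{equicon} with Proposition~\ref{supp2} and the basic structure theory of category $\mathcal{O}$. Since the denominator of $c$ is cuspidal, Proposition~\ref{equicon} tells us that $\mathcal{O}_c(W,\h)_0$ is not semisimple, but every object is either finite dimensional or has full support $\h$. First I would use the fact (from \cite{GGOR,Gy,DJO}, already invoked in Subsection~3.8) that non-semisimplicity of $\mathcal{O}_c(W,\h)_0$ for constant $c>0$ is equivalent to reducibility of the polynomial representation $M_c(W,\h,\CC)$; equivalently, $L_c(W,\h,\CC)$ is a proper quotient of $M_c(W,\h,\CC)$, so it has a nontrivial maximal submodule, and in particular $L_c(W,\h,\CC) \ne M_c(W,\h,\CC)$.

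The key point is then to identify the support of $L := L_c(W,\h,\CC)$. By Proposition~\ref{supp1}, $\Supp L / W$ is irreducible, and by Proposition~\ref{supp} it is a union of strata, so it corresponds to a conjugacy class $C_L \in \mathrm{Par}(W)$, the stabilizer of a generic point of the support. I would argue that $C_L$ cannot be the class of a proper nontrivial parabolic $W'$: if it were, then by Proposition~\ref{supp2}(i), $\Res_b(L)$ would be a nonzero finite dimensional module over $H_{c'}(W',\h/\h^{W'})$, where $c'$ is the restriction of $c$ (still the constant $c$, since $c$ is constant). This would place $c' = c$ in $FD(W',\h/\h^{W'})$, so the denominator of $c$ would divide a degree of the proper parabolic $W'$, contradicting cuspidality. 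Hence the generic stabilizer on $\Supp L$ is either all of $W$ (i.e. $\Supp L = \{0\}$, so $L$ is finite dimensional) or trivial (i.e. $\Supp L = \h$).

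It remains to rule out $\Supp L = \h$. Here I would use that $\Supp L = \h$ forces $L$ to have the same Hilbert/Gelfand--Kirillov dimension as $M_c(W,\h,\CC)$, namely $\dim \h$; combined with $L$ being a quotient of $M_c(W,\h,\CC)$ with proper maximal submodule $J$, this means $J$ is a nonzero submodule of the rank-one (over $\CC[\h]$) module $M_c(W,\h,\CC)$ supported on a proper subvariety. But any nonzero submodule of $M_c(W,\h,\CC) \cong \CC[\h]$ as a $\CC[\h]$-module is itself of full support $\h$ (it contains $f\cdot\CC[\h]$ for some nonzero $f$), so $J$ has full support, and then the Hilbert polynomial of $L = M/J$ has strictly smaller degree than that of $M$, contradicting $\Supp L = \h$. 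Therefore $\Supp L = \{0\}$ and $L_c(W,\h,\CC)$ is finite dimensional.

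The main obstacle I anticipate is the last step — pinning down precisely why full support of $L$ is incompatible with $L$ being a proper quotient of the polynomial representation. The cleanest route is the growth/Hilbert-polynomial argument sketched above, using that $M_c(W,\h,\CC)$ is free of rank one over $\CC[\h]$ so that any proper quotient has strictly smaller GK-dimension; one should make sure the exactness properties of $\Res_b$ (Proposition~\ref{propres}) and the support computations of Proposition~\ref{supp2} are being applied to the constant restriction $c'=c$ correctly, which is automatic here precisely because $c$ is assumed constant.
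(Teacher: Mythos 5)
Your proof is correct and takes essentially the same route as the paper's, which cites \cite{DJO} for reducibility of $M_c(W,\h,\CC)$, observes that a proper quotient of the polynomial representation cannot have full support, and concludes via the dichotomy of Proposition~\ref{equicon}. Note, though, that your middle paragraph (ruling out proper nontrivial parabolics via Proposition~\ref{supp2} and cuspidality) re-derives the very dichotomy you already invoked from Proposition~\ref{equicon}, so it is logically redundant; the only genuine value-add over the paper's two-line proof is your explicit Hilbert-polynomial justification that a proper quotient of $\CC[\h]$ cannot have full support, a step the paper treats as evident.
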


\begin{proof} 
Since the denominator of $c$ divides a degree of $W$, 
$c$ is a singular value, and since $c>0$, by \cite{DJO}, the polynomial representation
$M_c(W,\h,\CC)$ is reducible, so $L_c(W,\h,\CC)$ cannot have full
support. So by Proposition \ref{equicon}, 
it is finite dimensional, as desired. 
\end{proof}

\begin{remark}
If $W$ is a Weyl group, this proposition follows from the main
result of \cite{VV}, which states that for $c>0$, 
$L_c(W,\h,\Bbb C)$ is finite dimensional if and only if the
denominator of $c$ is an elliptic number, because every 
cuspidal number is an elliptic
number\footnote{An element $w\in W$ is {\it elliptic} if it has no 
nonzero invariants in the reflection representation $\h$, and is
{\it regular} if it has a regular eigenvector in $\h$. An {\it elliptic
number} is, by definition, the order of an elliptic regular
element (see \cite{VV}).}.
\end{remark}

\begin{remark}
We note that in the case when $W$ is a Weyl group and $d=h$, 
i.e. $c=j/h$, $j\in \Bbb N$, $(j,h)=1$, 
the fact that the representations $L_c(W,\h,\CC)$ are finite
dimensional follows from the work of Cherednik (see
\cite{Ch1}); these are the so-called perfect representations,
of dimension $j^r$, where $r$ is the rank of $W$.  
More precisely, Cherednik works with the true double affine Hecke
algebras ${\mathcal H}_{q,t}$ 
(not with their rational degenerations), but it is known
(\cite{Ch1}) that finite dimensional representations for the two kinds of
algebras have the same structure if $q=e^\hbar$, $t=e^{\hbar c}$,
where $\hbar$ is a formal parameter. 
\end{remark}

Now suppose $c$ is as in Corollary \ref{fd}, and
consider the KZ functor $\O_c(W,\h)_0\to {\rm
Rep}\mathcal{H}_q(W)$, where $q=e^{2\pi ic}$, and 
$\mathcal{H}_q(W)$ is the corresponding finite Hecke algebra. 
Then it follows  from the results of \cite{GGOR} and
Proposition \ref{equicon} that this functor 
kills finite dimensional irreducible modules, 
and sets up a bijection between other irreducible modules (with
full support) and irreducible representations of  
$\mathcal{H}_q(W)$. Thus we get 

\begin{corollary}\label{diffe} 
The number of irreducible finite dimensional representations 
of $H_c(W,\h)$ equals $N(W)-N_q(W)$, where $N(W)$ is the number of irreducible representations
of $W$, and $N_q(W)$ is the number of irreducible representations of
${\mathcal H}_q(W)$.
\end{corollary}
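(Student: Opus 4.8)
The plan is to count irreducible finite dimensional representations of $H_c(W,\h)$ by comparing the two ``ends'' of the KZ functor. First I would recall the setup: under the hypotheses of Corollary \ref{fd}, the denominator of $c$ is a cuspidal number, so by Proposition \ref{equicon} every irreducible $M\in {\mathcal O}_c(W,\h)_0$ is either finite dimensional or has full support in $\h$. The KZ functor $\mathrm{KZ}\colon {\mathcal O}_c(W,\h)_0\to {\rm Rep}\,{\mathcal H}_q(W)$ of \cite{GGOR}, with $q=e^{2\pi ic}$, is exact, and by the general theory of \cite{GGOR} it is a quotient functor whose kernel is the Serre subcategory ${\mathcal O}_c^{\mathrm{tor}}$ of objects with support strictly smaller than $\h$. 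Hence $\mathrm{KZ}$ induces a bijection between isomorphism classes of irreducible objects of ${\mathcal O}_c(W,\h)_0$ that it does \emph{not} kill and isomorphism classes of irreducible ${\mathcal H}_q(W)$-modules; the ones it kills are precisely the irreducibles lying in ${\mathcal O}_c^{\mathrm{tor}}$, i.e.\ (by Proposition \ref{equicon}) exactly the finite dimensional irreducibles.

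Next I would count: the number of irreducible objects of ${\mathcal O}_c(W,\h)_0$ up to isomorphism is $N(W)$, the number of irreducible representations of $W$, since the $L_c(W,\h,\tau)$ for $\tau\in\widehat W$ form a complete irredundant list (standard from \cite{GGOR}). The number not killed by $\mathrm{KZ}$ equals $N_q(W)$, the number of irreducible ${\mathcal H}_q(W)$-modules, by the bijection just described. Therefore the number killed, which is the number of irreducible finite dimensional $H_c(W,\h)$-modules, is $N(W)-N_q(W)$. This is exactly the asserted formula, so the corollary follows.

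The one point requiring care—and the main obstacle—is to be sure that the irreducible objects killed by $\mathrm{KZ}$ are \emph{precisely} the finite dimensional ones, with no others slipping in. A priori $\mathrm{KZ}$ kills every irreducible in ${\mathcal O}_c^{\mathrm{tor}}$, i.e.\ every irreducible whose support is a proper union of strata; the special feature of the cuspidal hypothesis, supplied by Proposition \ref{equicon}, is that no such intermediate supports occur—every proper-support irreducible is already finite dimensional. Thus $\mathrm{KZ}$ kills $L$ iff $L$ is finite dimensional, and the count is clean. (One should also note, as already recorded in the discussion preceding the corollary, that the restriction of $\mathrm{KZ}$ to the irreducibles with full support is injective on isomorphism classes and lands in a complete set of irreducible ${\mathcal H}_q(W)$-modules—again a direct consequence of the quotient-functor description in \cite{GGOR}.) No further computation is needed.
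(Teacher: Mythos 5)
Your argument is correct and is essentially the paper's proof: the paper likewise invokes Proposition \ref{equicon} together with the GGOR quotient-functor description of $\mathrm{KZ}$ to conclude that $\mathrm{KZ}$ kills exactly the finite dimensional irreducibles and induces a bijection between the full-support irreducibles and irreducible ${\mathcal H}_q(W)$-modules, and then counts. Your added care about why no intermediate-support irreducibles arise matches the paper's implicit use of the cuspidality hypothesis.
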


\begin{remark}
It turns out (see \cite{GP}) 
that if the denominator of $c$ is a cuspidal number 
then $N(W)-N_q(W)$ is always 1 or 2, and it is 2 only 
in the cases when $W$ is of type $E_8$
or $H_4$ and $d=15$. In both of these cases, the additional
finite dimensional irreducible representation is the one whose
highest weight is the reflection representation of $W$. 
\end{remark}

\begin{remark}
The results of this subsection can also be found in the latest
version of the paper \cite{Rou}, Section 5.2.4, which appeared
while this paper was being written. 
\end{remark} 

\section{The Gordon-Stafford theorem}

\subsection{Aspherical parameter values}

Let $M$ be a nonzero $H_c(W,\h)$-module. Let us say that 
$M$ is {\it aspherical} if $e_WM=0$. Let $c$ be called 
aspherical if $H_c(W,\h)$ admits an aspherical
representation which belongs to the category ${\mathcal O}_c(W,\h)_0$. 
Let $\Sigma(W,\h)$ be the set of aspherical values.
If $W'\subset W$ is a parabolic subgroup,
then denote by $\Sigma'(W',\h)$ the preimage 
of $\Sigma(W',\h)$ in $\Bbb C[S]^W$ under the restriction map
$c\mapsto c'$.

Let also $FDA(W,\h)$ be the set of $c$ for which $H_c(W,\h)$
admits a finite dimensional aspherical representation. 

\begin{theorem}\label{main}
(i) $c\in \Sigma(W,\h)$ if and only if $H_c(W,\h)e_W H_c(W,\h)\ne
H_c(W,\h)$.

(ii)  We have 
$$
\Sigma(W,\h)=FDA(W,\h)\cup\bigcup_{W'\in {\rm Par}(W)}\Sigma'(W',\h/\h^{W'}).
$$
\end{theorem}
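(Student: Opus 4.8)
The plan is to prove (i) first, then deduce (ii) by a support analysis built on the parabolic restriction functors. For part (i), the idempotent $e_W\in \CC W\subset H_c(W,\h)$ gives the spherical subalgebra $e_WH_c(W,\h)e_W$, and by Lemma \ref{le}(iii) (applied with $A=H_c(W,\h)$, $H=W$, $G=W$, or more directly by Morita-theoretic generalities) the two-sided ideal $H_c(W,\h)e_WH_c(W,\h)$ equals the whole algebra if and only if the functor $M\mapsto e_WM$ is faithful on $H_c(W,\h)$-mod, equivalently kills no nonzero module. Since every $H_c(W,\h)$-module $M$ has a nonzero quotient (or sub) lying in category ${\mathcal O}_c(W,\h)$ — more precisely, if $e_W$ kills some nonzero module at all then it kills some nonzero module in ${\mathcal O}_c(W,\h)_0$, which one extracts by a standard argument using the grading by generalized eigenvalues of $\bold h$ — the condition ``$H_c(W,\h)e_WH_c(W,\h)\ne H_c(W,\h)$'' is equivalent to the existence of an aspherical object in ${\mathcal O}_c(W,\h)_0$, i.e. to $c\in\Sigma(W,\h)$. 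I would spell out the reduction to category ${\mathcal O}$ carefully, since that is where the only real content of (i) lies.

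For part (ii), I would prove the two inclusions separately. The inclusion $\supseteq$ is the easy direction: if $c\in FDA(W,\h)$ then a finite dimensional aspherical module is itself an aspherical object of ${\mathcal O}_c(W,\h)_0$, so $c\in\Sigma(W,\h)$; and if $c'\in\Sigma(W',\h/\h^{W'})$ for some parabolic $W'=W_b$, take an aspherical $N\in{\mathcal O}_{c'}(W',\h/\h^{W'})_0$ and consider $\Ind_b(N)\in {\mathcal O}_c(W,\h)_0$. One must check $\Ind_b(N)\ne 0$ — this follows because $\Res_b\Ind_b$ has $\Res_b$ faithful enough, or directly from Proposition \ref{propres}(ii) and the fact that $N$ embeds into $\Res_b\Ind_b(N)$ — and that $e_W\Ind_b(N)=0$. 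The asphericity of $\Ind_b(N)$ should follow by comparing $e_W$ with $e_{W'}$ through the centralizer isomorphism $\theta$ of Theorem \ref{comp}, using Lemma \ref{le}(ii): under $\theta$, $e_W$ on the $W$-side corresponds (up to the matrix identification $I$) to $e_{W'}$ on the $W'$-side, so $e_W$ acting on $\Ind_b(N)$ is governed by $e_{W'}$ acting on $N$, which vanishes.

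For the inclusion $\subseteq$: suppose $c\in\Sigma(W,\h)$, so there is a nonzero aspherical $M\in{\mathcal O}_c(W,\h)_0$; we may take $M$ irreducible (a simple subquotient of an aspherical module, with $e_W$ exact, is still aspherical). By Propositions \ref{supp} and \ref{supp1}, $M$ has a well-defined class $C_M\in{\rm Par}(W)$, the stabilizer of a generic point of $\supp M$. If $C_M$ is the class of the trivial group, then $\supp M=\h$, and I would argue that an irreducible module with full support cannot be aspherical — indeed $e_WM$ is then a nonzero module over the spherical subalgebra whose support is also all of $\h$, contradicting $e_WM=0$; more carefully, full support forces $e_WM\ne 0$ because $M$ is generated over $\CC[\h]\rtimes W$ by finitely many vectors and genericity lets one see $e_W$ act nontrivially. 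Hence $C_M$ is a nontrivial parabolic class, say $C_M=[W']$ with $W'=W_b$, $b\in\h^{W'}_{\reg}$. By Proposition \ref{supp2}(i), $\Res_b(M)$ is a nonzero finite dimensional $H_{c'}(W',\h/\h^{W'})$-module. It remains to check $\Res_b(M)$ is aspherical over $H_{c'}(W',\h/\h^{W'})$: again via $\theta$ and Lemma \ref{le}(ii), $e_{W'}\Res_b(M)$ is controlled by $e_W\widehat{M}_b=\widehat{e_WM}_b=0$. So either $c'\in FDA(W',\h/\h^{W'})$ — when $W'$ is a proper parabolic this puts $c$ in $\Sigma'(W',\h/\h^{W'})$ via $FDA\subseteq\Sigma$, and when $W'=W$ (i.e. $b=0$, $\supp M=\{0\}$, $M$ finite dimensional) this is exactly $c\in FDA(W,\h)$ — and we land in the asserted union.

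The main obstacle I expect is the precise bookkeeping of how $e_W$ transports across the centralizer isomorphism $\theta$ and the Morita equivalence $I$ of Lemma \ref{le}: one needs that ``taking $W$-invariants'' on $H_c(W,\h)$-modules matches, after applying $\theta_*$ and $I^{-1}$, ``taking $W'$-invariants'' on $H_{c'}(W',\h)_0$-modules, so that asphericity is preserved by both $\Res_b$ and $\Ind_b$. This is morally Lemma \ref{le}(ii), but $\theta$ mixes the group action with the polynomial and Dunkl parts, so I would verify on generators of $P={\rm Fun}_{W'}(W,\widehat{H_{c'}})$ that $\theta(e_W)$ acts as the projector onto the ``constant'' $H_{c'}$-valued functions, whose further $W'$-invariant part is exactly $e_{W'}$ — then $e_WI(N)=I(e_{W'}N)$ and everything follows. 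The rest of the argument is a routine assembly of results already established in the excerpt.
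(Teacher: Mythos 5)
Your overall architecture for (ii) $\subseteq$ matches the paper's: take a nonzero aspherical $M\in{\mathcal O}_c(W,\h)_0$, split into the finite-dimensional case ($c\in FDA$) and the positive-support case, and for the latter apply $\Res_b$ at a nonzero point $b$ of the support and check that $e_{W_b}$ still kills the result via the centralizer isomorphism $\theta$ and Lemma \ref{le}(ii). That part of your plan is sound and is exactly what the paper does.

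However, there are two genuine gaps. First, for part (i) your reduction to category ${\mathcal O}$ is proposed via the grading by generalized eigenvalues of $\bold h$, but $\bold h$ does not act locally finitely on an arbitrary $H_c(W,\h)$-module, so this grading is simply not available on the module you would need to start from. The paper instead takes the bimodule $B=H_c/H_ce_WH_c$, observes it is finitely generated as a $(\CC[\h]^W,\CC[\h^*]^W)$-bimodule, and applies a Nakayama-type step: $B/BI\ne 0$ where $I\subset\CC[\h^*]^W$ is the maximal ideal at $0$. The quotient $B/BI$ then lies in ${\mathcal O}_c(W,\h)_0$ and is aspherical. This finite-generation argument, not the $\bold h$-grading, is what forces the existence of an aspherical object in ${\mathcal O}$.

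Second, for the $\supseteq$ direction of (ii) you propose to take an aspherical $N\in{\mathcal O}_{c'}(W',\h/\h^{W'})_0$ and use $\Ind_b(N)$ as the witness, asserting that $N$ embeds into $\Res_b\Ind_b(N)$ and invoking Proposition \ref{propres}(ii). This has the adjunction backwards: $\Ind_b$ is the \emph{right} adjoint of $\Res_b$ (Theorem \ref{adj2}), so the natural transformation goes $\Res_b\Ind_b(N)\to N$ (a counit), not $N\to\Res_b\Ind_b(N)$, and Proposition \ref{propres}(ii) concerns the other composite $\Ind_b\Res_b$. Without a different argument, $\Ind_b(N)\ne 0$ is not established. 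The paper sidesteps the module-level construction entirely by arguing contrapositively at the level of two-sided ideals: if $c\notin\Sigma(W,\h)$ then by (i) $H_ce_WH_c=H_c$; passing to the completion at $b$ and transporting through $\theta$, Lemma \ref{le}(iii) gives $\widehat{H_{c'}}_0 e_{W'}\widehat{H_{c'}}_0=\widehat{H_{c'}}_0$, whence $H_{c'}e_{W'}H_{c'}=H_{c'}$ and by (i) again $c'\notin\Sigma(W',\h/\h^{W'})$. This is the cleaner route and avoids any need to prove nonvanishing of $\Ind_b$.
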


\begin{proof}
(i) This is essentially proved in \cite{BEG1}. Only 
the ``if'' direction requires proof. Let 
$B=H_c(W,\h)/H_c(W,\h)e_W H_c(W,\h)$; we have $B\ne 0$.
Let us regard $B$ as a $(\CC[\h]^W,\CC[\h^*]^W)$-bimodule; 
then it is finitely generated. Thus if $I$ is the maximal 
ideal in $\CC[\h^*]^W$ corresponding to the point $0$, 
then $B/BI\ne 0$. So $B/BI$ is a module from category ${\mathcal
O}_c(W,\h)_0$ which is aspherical. Hence $c$ is aspherical. 

(ii) By Lemma \ref{le} and Theorem \ref{comp} 
if $c\notin \Sigma(W,\h)$ then
$$
H_{c'}(W',\h/\h^{W'})e_{W'}H_{c'}(W',\h/\h^{W'})=H_{c'}(W',\h/\h^{W'}),
$$
which by (i) implies that $c'$ is not aspherical. 

Thus, $\Sigma(W,\h)$ contains the union 
$FDA(W,\h)\cup\bigcup_{W'\in {\rm Par}(W)}\Sigma'(W',\h/\h^{W'})$.
It remains to show that it is also contained in this union. 
To this end, let $c\in \Sigma(W,\h)$. 
Then there exists a module $M\ne 0$ from category ${\mathcal O}_c(W,\h)_0$ such
that $e_WM=0$. If $M$ is finite dimensional, then $c\in
FDA(W,\h)$, and we are done. Otherwise, $M$ must have a nonzero
support in $\h$. Let $b\in \h$ be a nonzero point of this support, and
$M_b=\Res_b(M)$. This is a module from category
${\mathcal O}_{c'}(W_b,\h/\h^{W_b})$, which is killed by
$e_{W_b}$. 
Thus, $c'\in \Sigma(W_b,\h/\h^{W_b})$, and $c\in 
\Sigma'(W_b,\h/\h^{W_b})$, as desired. 
\end{proof}

\begin{corollary}\label{gorsta} If $W=S_n$ and $\h$ its reflection
representation, then $\Sigma(W,\h)$ is the set $Q_n$ of rational
numbers in $(-1,0)$ with denominator $\le n$.  
\end{corollary}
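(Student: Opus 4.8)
The plan is to deduce Corollary \ref{gorsta} from Theorem \ref{main}(ii) by an induction on $n$, determining $FDA(S_n,\h)$ explicitly and using the structure of parabolic subgroups of $S_n$. First I would recall that the parabolic subgroups of $S_n$ (acting on $\h=\CC^{n-1}$) are the Young subgroups $S_{\mu}=S_{\mu_1}\times\cdots\times S_{\mu_k}$ for partitions $\mu$ of $n$, and that for such a subgroup $\h/\h^{S_\mu}$ is the direct sum of the reflection representations of the factors $S_{\mu_i}$. Hence, using the tensor-product decomposition of category $\mathcal O$ mentioned in the footnote after the definition of $\mathcal O_c(W,\h)$, one has $\Sigma(S_\mu,\h/\h^{S_\mu})=\bigcup_i \Sigma(S_{\mu_i},\CC^{\mu_i-1})$ (a module over a product is aspherical iff one of its tensor factors is aspherical). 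So in the formula of Theorem \ref{main}(ii) the parabolic contributions for $W=S_n$ only ever re-introduce the sets $\Sigma(S_j,\CC^{j-1})$ for $j<n$, and the induction will go through cleanly once the finite-dimensional aspherical set $FDA(S_n,\h)$ is pinned down.

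Next I would compute $FDA(S_n,\h)$. By the result of \cite{BEG2} quoted in the Example above, $H_c(S_n,\CC^{n-1})$ has a finite-dimensional representation exactly when $c=r/n$ with $(r,n)=1$; and by \cite{BEG1} (or \cite{BEG2}) the unique such irreducible, namely $L_c(S_n,\h,\CC)$ of dimension $r^{n-1}$, is aspherical precisely when $c<0$, i.e. when $c=-r/n$ with $0<r<n$, $(r,n)=1$ — for $c>0$ the finite-dimensional representation contains the trivial $W$-isotypic component. Thus $FDA(S_n,\h)=\{-r/n:\ 0<r<n,\ (r,n)=1\}$, which is exactly the set of rationals in $(-1,0)$ with denominator \emph{equal to} $n$.

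Combining the two computations: by Theorem \ref{main}(ii),
$$
\Sigma(S_n,\h)=FDA(S_n,\h)\cup\bigcup_{\mu\vdash n,\ \mu\ne(n)}\ \bigcup_i \Sigma(S_{\mu_i},\CC^{\mu_i-1}),
$$
and every $\mu_i$ appearing in a nontrivial partition is $<n$, while conversely every $j<n$ occurs as some $\mu_i$ (take $\mu=(j,1,\dots,1)$), so the double union is $\bigcup_{j=1}^{n-1}\Sigma(S_j,\CC^{j-1})$. By the inductive hypothesis this equals the set of rationals in $(-1,0)$ with denominator $\le n-1$; adding $FDA(S_n,\h)$, the rationals in $(-1,0)$ with denominator exactly $n$, yields $Q_n$, the rationals in $(-1,0)$ with denominator $\le n$. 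The base case $n=1$ (or $n=2$, where $\Sigma=\{-1/2\}$) is immediate.

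The main obstacle is the asphericity half of the $FDA$ computation: one must know not merely that $H_{-r/n}(S_n,\h)$ has a finite-dimensional module but that such a module (equivalently $L_{-r/n}(S_n,\h,\CC)$) is killed by $e_W$, and that for $c>0$ no finite-dimensional module is aspherical. I would extract this from \cite{BEG1}, where the $W$-character of the finite-dimensional representation is computed; the sign of $c$ controls whether the trivial representation of $W$ appears, hence whether $e_W$ acts by zero. Everything else — the classification of parabolics in $S_n$, the behaviour of $\Sigma$ under products, and the induction — is routine once this input is in hand.
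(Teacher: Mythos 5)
Your proof is correct and follows essentially the same route as the paper: combine Theorem \ref{main}(ii) with the determination of $FDA(S_n,\h)$ from \cite{BEG2} and induct on $n$, using that parabolic subgroups of $S_n$ are Young subgroups and that $\Sigma$ behaves multiplicatively on products; the paper merely leaves these last two observations to the reader. One small slip worth noting: for $c=-r/n<0$ the finite-dimensional irreducible of $H_c(S_n,\h)$ is $L_c(S_n,\h,\mathrm{sign})$ rather than $L_c(S_n,\h,\CC)$ (the two are exchanged under the isomorphism $H_c\cong H_{-c}$), but this does not affect your correct identification of $FDA(S_n,\h)$ as the rationals in $(-1,0)$ with denominator exactly $n$.
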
 

This is a slight strengthening of the result of Gordon and Stafford \cite{GS} 
who proved that $\Sigma(S_n,\h)\setminus Q_n$ is a 
(finite) set contained in $\frac{1}{2}+ \Bbb Z$. 
It was proved earlier in \cite{DJO}, Theorem 4.9, 
that $\Sigma(S_n,\h)\supset Q_n$. 

\begin{proof}
It follows from the results of \cite{BEG2} 
that 
$$
FDA(S_n,\h)=\lbrace{r/n|-n<r<0, GCD(n,r)=1\rbrace}.
$$ 
Thus the result follows from Theorem \ref{main} immediately by
induction in $n$. 
\end{proof} 

Recall (\cite{BEG1}) that we have translation (or shift) functors 
$$
F: H_c(S_n,\h)-{\rm mod}\to H_{c+1}(S_n,\h)-{\rm mod},
F_*: H_{c+1}(S_n,\h)-{\rm mod}\to H_{c}(S_n,\h)-{\rm mod} 
$$ 
defined by the formulas 
$$
F(V)=H_{c+1}e_-\otimes_{e_-H_{c+1}e_-=e_+H_ce_+}e_+V,\quad
F_*(V)=H_ce_+\otimes_{e_+H_ce_+=e_-H_{c+1}e_-}e_-V,
$$
where we use a shorthand notation $H_c:=H_c(S_n,\h)$, and 
$e_+,e_-$ are the symmetrizer and antisymmetrizer for $S_n$. 

\begin{corollary} If $c\notin Q_n$ then the translation functor 
$F$ is an equivalence of categories. 
\end{corollary}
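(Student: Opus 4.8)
The plan is to deduce this from the characterization of $\Sigma(S_n,\h)=Q_n$ in Corollary~\ref{gorsta} together with the Morita-equivalence interpretation of asphericity. The key point is that the translation functor $F$ is built from the bimodules $H_{c+1}e_-$ and $e_+H_c$, and these implement Morita equivalences precisely when $H_c$ is ``spherical enough.'' Concretely, I would first recall (from \cite{BEG1}, which is cited for the definition of $F$) that $F$ and $F_*$ are inverse equivalences whenever the relevant idempotent-generated ideals are the whole algebra, i.e. whenever $H_c(S_n,\h)e_+H_c(S_n,\h)=H_c(S_n,\h)$ and the analogous statement holds on the $H_{c+1}$ side for $e_-$. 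By Theorem~\ref{main}(i), the first condition is exactly $c\notin\Sigma(S_n,\h)$, and since conjugating by the sign character intertwines the roles of $e_+=e_W$ and $e_-$ (or more precisely, tensoring with the sign representation gives an equivalence $H_c(S_n,\h)\text{-mod}\to H_c(S_n,\h)\text{-mod}$ swapping $e_+$ and $e_-$, and $e_-H_{c+1}e_-=e_+H_ce_+$), the $e_-$-condition at parameter $c+1$ translates into an $e_+$-condition that is again governed by $\Sigma$.

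The main steps, in order, would be: (1) State precisely the criterion from \cite{BEG1} that $F$ is an equivalence iff both $H_ce_+H_c=H_c$ and $e_-H_{c+1}e_-$ generates, i.e. iff $c$ is not aspherical for $(S_n,\h)$ in the appropriate sense at both ends; (2) use Theorem~\ref{main}(i) to rewrite $H_ce_+H_c=H_c$ as $c\notin\Sigma(S_n,\h)$; (3) handle the antisymmetrizer condition by passing through the sign-twist automorphism, observing that the functor $V\mapsto V\otimes\mathrm{sgn}$ sends $H_c(S_n,\h)$-modules to $H_c(S_n,\h)$-modules and carries aspherical modules (killed by $e_+$) to modules killed by $e_-$, so the ``$e_-$ at $c$'' asphericity locus coincides with $\Sigma(S_n,\h)$ as well; (4) invoke Corollary~\ref{gorsta} to identify $\Sigma(S_n,\h)=Q_n$, so that $c\notin Q_n$ forces both conditions and hence $F$ is an equivalence.

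The main obstacle I anticipate is being careful about the interplay between the parameters $c$ and $c+1$ and between $e_+$ and $e_-$: the functor $F$ mixes $H_c$ with $H_{c+1}$, and one must check that ``$F$ is an equivalence'' really reduces to a condition purely about $\Sigma$ at the single value $c$, not a pair of independent conditions at $c$ and $c+1$. The resolution is the known fact (essentially \cite{BEG1}, and also implicit in the shift-functor formalism) that $e_-H_{c+1}e_-\cong e_+H_ce_+$ as algebras, so the two idempotent-ideal conditions are about the same spherical subalgebra viewed two ways; combined with Theorem~\ref{main}(i) and the sign-twist symmetry, both collapse to $c\notin\Sigma(S_n,\h)$. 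Granting this, the corollary is immediate from $\Sigma(S_n,\h)=Q_n$, and the proof is a short deduction rather than a computation.
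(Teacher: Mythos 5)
Your overall strategy matches the paper's: reduce the claim to the two idempotent-ideal conditions $H_c e_+ H_c = H_c$ and $H_{c+1} e_- H_{c+1} = H_{c+1}$ (equivalently, $F_*F = \mathrm{id}$ and $FF_* = \mathrm{id}$), invoke Theorem~\ref{main}(i), use a sign twist, and finish with Corollary~\ref{gorsta}. However, there is a genuine gap in how you dispose of the second condition, and it rests on two connected errors.

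First, the sign twist $V \mapsto V \otimes \mathrm{sgn}$ is \emph{not} an equivalence $H_c(S_n,\h)\text{-mod} \to H_c(S_n,\h)\text{-mod}$; it is an equivalence $H_c(S_n,\h)\text{-mod} \to H_{-c}(S_n,\h)\text{-mod}$ (reweighting $w \mapsto \mathrm{sgn}(w)\,w$ flips the sign of $c$ in the defining commutation relation). Under this equivalence, ``killed by $e_-$ at parameter $c+1$'' corresponds to ``killed by $e_+$ at parameter $-(c+1) = -1-c$.'' Thus the second condition is \emph{not} $c \notin \Sigma(S_n,\h)$; it is $-1-c \notin \Sigma(S_n,\h)$. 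Second, and as a consequence, the observation $e_+ H_c e_+ = e_- H_{c+1} e_-$ does not make the two ideal conditions coincide: those conditions live in the \emph{full} algebras $H_c$ and $H_{c+1}$, which are genuinely different, and having a common spherical subalgebra says nothing about whether either Morita equivalence holds. So your claim that ``both collapse to $c \notin \Sigma(S_n,\h)$'' is not justified by what you wrote.

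What you are actually left needing is: $c \notin Q_n$ and $-1-c \notin Q_n$. The missing ingredient --- and it is exactly the one extra clause in the paper's proof --- is the elementary but essential fact that $Q_n$ is stable under $c \mapsto -1-c$ (rational numbers in $(-1,0)$ with denominator $\le n$ form a set symmetric about $-1/2$). Once you add this observation, your two conditions become equivalent and the proof closes. Without it, you have only shown $F$ is an equivalence for $c$ outside $Q_n \cup (-1 - Q_n)$, which happens to equal the complement of $Q_n$ only because of the symmetry you did not state.
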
 

This corollary was proved in \cite{GS} for $c\notin
\frac{1}{2}+\Bbb Z$. 

\begin{proof} 
We have $F_*F(V)=H_ce_+V$, and $FF_*(U)=
H_{c+1}e_-U$. There is an automorphism of $H_c$ sending $c$ to
$-c$ and $e_+$ to $e_-$; also $Q_n$ is stable under the
map $c\to -1-c$. This implies that $F_*F(V)=V$, $FF_*(U)=U$, so
$F$ is an equivalence. 
\end{proof} 

\subsection{Aspherical values of $c$ for real reflection groups}

For a general $W$, the determination of the set $\Sigma(W,\h)$ is
an interesting open problem. For instance, let $W$ be a real 
reflection group, $\h$ its reflection representation, 
and $c$ a constant function. 
Let us say that $c$ is {\it strongly singular} if the module 
$L_c(W,\h,{\rm sign})$ is aspherical. It follows
from \cite{DJO}, Theorem 4.9, that $c$ is strongly singular 
if and only if $c=-j/d_i$, where $1<j<d_i-1$, and $d_i$ are the
degrees of the generators in $\CC[\h]^W$. Also, it is clear that
any strongly singular $c$ is aspherical. Thus, for any $i,j$ as
above, $-j/d_i\in \Sigma(W,\h)$. Finally, any aspherical value 
$c\in (-1,0)$ is strongly singular, since for other $c\in
(-1,0)$, the category ${\mathcal O}_c(W,\h)_0$ is semisimple,
\cite{GGOR}, and hence all simple objects have $W$-invariant
vectors (as they coincide with the corresponding Verma modules). 

However (contrary to what was conjectured in the previous version 
of this paper), an aspherical value of $c$ need not belong to $(-1,0)$ 
and may be positive. This was pointed out to us by M. Balagovic and A. Puranik, 
who found an aspherical representation for $c=1/2$ and the Coxeter group of type $H_3$. 
There are also the following examples for type $B_n$.  

\begin{example} Let $W=S_n\ltimes \Bbb Z_2^n$ be the Weyl group of type $B_n$, 
and $\h$ its reflection representation. 
It is shown in \cite{EM} that if $m,l$ are positive integers with 
$ml=n$, and $2c(m-l\pm 1)=1$, then there exists a 
representation $U$ of $H_c(W,\h)$ which is irreducible as a $W$-module.
Namely, the action of $\Bbb Z_2^n$ on $U$ is by $\pm 1$
(depending on the above choice of sign), and as a representation $S_n$,
$U$ is isomorphic to the irreducible representation $\pi_\lambda$, where 
$\lambda$ is the rectangular Young diagram with $m$ columns and $n$ rows. 

In particular, if $m-l\pm 1>0$ but $l>1$, we get an aspherical representation with $c>0$.    
\end{example}

\subsection{Aspherical representations for $S_n$}

Let $W=S_n$, $\h$ be its reflection representation, 
and let $c=-r/m$, $2\le m\le n$, $1\le r\le m-1$
(so $c\in (-1,0)$). 

\begin{proposition}\label{asph} An irreducible representation
$L=L_c(\tau)$ of 
$H_c(W,\h)$ is aspherical if and only if its support is not equal
to $\h$. 
\end{proposition}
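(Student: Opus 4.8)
The two implications are of quite different nature. I would prove ``$L$ aspherical $\Rightarrow{\rm Supp}\,L\ne\h$'' by a soft argument with the restriction functor that works for any $W$ and any $c$ (indeed for any module, not just irreducible ones), and ``${\rm Supp}\,L\ne\h\Rightarrow L$ aspherical'' using, essentially at one point, the structure of finite dimensional representations of $H_{-r/m}(S_m,\CC^{m-1})$ from \cite{BEG2} --- this is where the hypotheses $W=S_n$, $c\in(-1,0)$ really enter.

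For the first implication, suppose $e_WL=0$. For every $b\in\h$ the completion satisfies $e_W\widehat{L}_b=\widehat{(e_WL)}_b=0$, since taking $W$-invariants commutes with the flat, $W$-trivial completion over $\CC[\h]^W$. Next, $e_W$ is compatible with $\Res_b$: running $\widehat{L}_b$ through $\Res_b(L)=(\zeta\circ E\circ I^{-1}\circ\theta_*)(\widehat{L}_b)$, the idempotent $e_W\in\widehat{H_c}(W,\h)_b\cong Z(W,W_b,\widehat{H_{c'}}(W_b,\h)_0)$ is carried by $I^{-1}$ to the idempotent $e_{W_b}$ of $\widehat{H_{c'}}(W_b,\h)_0$ (Lemma \ref{le}(ii)); since $\bold h$ commutes with $W$ (so $E$, which by Proposition \ref{nilpo} picks out the $\bold h$-finite vectors, commutes with $e_{W_b}$) and $\zeta$ commutes with $e_{W_b}$, one gets $e_{W_b}\Res_b(L)=0$ whenever $e_W\widehat{L}_b=0$. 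Taking $b$ generic, so that $W_b=1$ and $\Res_b(L)$ is the fiber of $L$ at $b$ (the Example after Proposition \ref{compuRI}), this forces the fiber of $L$ at every generic point to vanish, i.e. ${\rm Supp}\,L\ne\h$ (a union of strata by Proposition \ref{supp}).

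For the converse, let ${\rm Supp}\,L\ne\h$. By Propositions \ref{supp}--\ref{supp1} we have ${\rm Supp}\,L=\mathcal S(W')$ with $W'=C_L$ a nontrivial parabolic; fix $b$ with $W_b=W'$ and set $N=\Res_b(L)$, which by Proposition \ref{supp2}(i) is a nonzero finite dimensional $H_{c'}(W',\h/\h^{W'})$-module. By Corollary \ref{supp3}(i) the existence of such a module forces $W'$ to be conjugate to a product $S_m\times\dots\times S_m$ ($j$ factors, $j\ge1$; here $(r,m)=1$ since $c\in(-1,0)$), so $H_{c'}(W',\h/\h^{W'})\cong H_{-r/m}(S_m,\CC^{m-1})^{\otimes j}$ and every finite dimensional module over it has all composition factors of the form $\xi\boxtimes\dots\boxtimes\xi$ with $\xi$ an irreducible finite dimensional $H_{-r/m}(S_m,\CC^{m-1})$-module. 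By \cite{BEG2} such $\xi$ is unique, and it is aspherical: since $-r/m\in FDA(S_m,\CC^{m-1})$ (the input to the proof of Corollary \ref{gorsta}), $e_{S_m}$ kills a nonzero finite dimensional module, hence --- $e_{S_m}(-)$ being exact --- it kills $\xi$; therefore $e_{W'}N=0$. Finally, since $\Ind_b$ is right adjoint to $\Res_b$ (Theorem \ref{adj2}), the image of ${\rm id}_N$ under $\Hom(N,N)=\Hom(L,\Ind_bN)$ is a nonzero, hence (as $L$ is irreducible) injective map $L\hookrightarrow\Ind_bN$. Applying the exact functor $e_W$ and the compatibility of $e_W$ with $\Ind_b$ dual to the one above (the same tracing through Lemma \ref{le}(ii) gives $e_W\Ind_b(N)=0$ once $e_{W'}N=0$), we get $e_WL\hookrightarrow e_W\Ind_bN=0$, so $L$ is aspherical.

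\textbf{Main obstacle.} The formal part consists of the two compatibility statements between $e_W$ and $\Res_b,\Ind_b$; these are not deep, but must be handled carefully --- in the first one, $e_W\widehat{L}_b$ matches the intermediate $\widehat{H_{c'}}(W_b,\h)_0$-module \emph{before} $E$ and $\zeta$ are applied, so the matching has to be set up already at the level of the completed algebras (as above), not after passing to category $\O$. The genuinely non-formal ingredient, and the reason for restricting to $W=S_n$ and $c\in(-1,0)$, is the input from \cite{BEG2}: that the parabolics occurring as $C_L$ are exactly the $S_m\times\dots\times S_m$, and that the unique irreducible finite dimensional $H_{-r/m}(S_m,\CC^{m-1})$-module is aspherical. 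The latter fails outside $(-1,0)$ --- e.g. for $c=3/2$, $W=S_2$ the finite dimensional irreducible $H_c(S_2,\CC)$-module has a nonzero $S_2$-invariant vector --- which is precisely why the equivalence in the Proposition can break down there.
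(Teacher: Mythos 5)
Your proof is correct, and it takes a route that genuinely differs from the paper's in the converse direction. The paper argues geometrically: for ``support $=\h\Rightarrow$ not aspherical'' it restricts $L$ to $\h_{\rm reg}$, where $L$ becomes a nonzero $W$-equivariant local system with free $W$-action, so $(L|_{\h_{\rm reg}})^W\ne0$ and hence $L^W\ne0$; for ``support $\ne\h\Rightarrow$ aspherical'' it restricts $L$ to the open stratum $X'$ of its support, notes that asphericality of $\Res_b(L)$ forces $(L|_{X'})^W=0$, and uses irreducibility to get $L\hookrightarrow L|_{X'}$. Your argument replaces both localization steps by purely functorial ones: you push $e_W$ through the centralizer isomorphism of Theorem \ref{comp} (via Lemma \ref{le}(ii)) to show $e_W\widehat{L}_b=0\Rightarrow e_{W_b}\Res_b(L)=0$, which at generic $b$ collapses to ``fiber of $L$ is zero''; and for the converse you use the adjunction of Theorem \ref{adj2} to embed $L\hookrightarrow\Ind_b N$ and then show $e_W\Ind_b N=0$. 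Both routes rest on the same nontrivial input from \cite{BEG2} --- that for $c\in(-1,0)$ every finite dimensional $H_{c'}(W_b)$-module with $W_b$ a product of symmetric groups is aspherical --- and you identify this cleanly. The paper's version is shorter and avoids invoking the adjunction, at the cost of having to justify the local-to-global step $(L|_{X'})^W=0$; yours is more formal/homological and isolates the $e_W$-compatibility with $\Res_b$ and $\Ind_b$ as the reusable lemma, which is arguably worth recording. Your closing remark (the $c=3/2$, $W=S_2$ example showing why the statement is specific to $c\in(-1,0)$) is a nice sanity check and not in the paper. One small wording note: it is not quite that the idempotent is ``carried by $I^{-1}$'' --- rather, under the Morita equivalence $I$ one has $e_W I(N')\cong e_{W_b}N'$, which is the module-level consequence of Lemma \ref{le}(ii); the content is what you intend, but the phrasing could be tightened.
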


\begin{proof}
Suppose that the support of $L$ is $\h$. Then $L|_{\h_{\rm reg}}\ne 0$, 
so $(L|_{\h_{\rm reg}})^W\ne 0$, and hence $L^{W}\ne 0$, so $L$ is not
aspherical. 

Conversely, suppose the support of $L$ is $X\ne \h$, i.e. $X/W$
is an irreducible subvariety
of $\h/W$. Let $b\in X$ be a generic point. In this case, as we
have seen, 
${\rm Res}_b(L)$ is a finite dimensional representation of
$H_c(W_b)$. 
Since $-1<c<0$, and $W_b$ is a product of symmetric groups, we
see that 
${\rm Res}_b(L)$ is aspherical. 

Let $X'\subset X$ be the open set of points
with stabilizer conjugate to $W_b$. Because ${\rm Res}_b(L)$ is
aspherical,
we have $(L|_{X'})^W=0$. But since $L$ is irreducible, the map
$L\to L|_{X'}$ 
is injective, so $L^W=0$, and $L$ is aspherical.    
\end{proof} 

\begin{corollary}\label{spheal} For $-1<c<0$, the category ${\mathcal O}_{\rm
spherical}$ for the spherical subalgebra 
$e_W H_c(W,\h)e_W$ is equivalent to the category of finite
dimensional representations of 
the Hecke algebra ${\mathcal H}_q(W)$, where $q=e^{2\pi ic}$. 
\end{corollary}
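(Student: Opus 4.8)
The plan is to exhibit both the spherical functor $M\mapsto e_W M$ and the KZ functor as Serre quotient functors out of $\mathcal{O}_c(W,\h)_0$, and then to observe that for $-1<c<0$ they annihilate the same objects. For the spherical side, write $U_c=e_WH_c(W,\h)e_W$ for the spherical subalgebra; the functor $e_W(-)=\Hom_{H_c(W,\h)}(H_c(W,\h)e_W,-)$ has the left adjoint $H_c(W,\h)e_W\otimes_{U_c}-$, and since $e_W\bigl(H_c(W,\h)e_W\bigr)=U_c$ this left adjoint is a section of $e_W(-)$. Hence $e_W(-)$ is a localization functor, and on restricting to category $\mathcal{O}$ it identifies $\mathcal{O}_{\mathrm{spherical}}$ with the Serre quotient of $\mathcal{O}_c(W,\h)_0$ by the subcategory $\mathcal{A}$ of aspherical modules. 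On the other side, by \cite{GGOR} the functor $\mathrm{KZ}\colon\mathcal{O}_c(W,\h)_0\to\Rep \mathcal{H}_q(W)$ with $q=e^{2\pi ic}$ is a quotient functor whose kernel is the Serre subcategory $\mathcal{O}_c^{\mathrm{tor}}$ of modules supported on $\h\setminus\h_{\rm reg}$, equivalently of modules whose support in $\h$ is a proper subset.

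The crux is then to show $\mathcal{A}=\mathcal{O}_c^{\mathrm{tor}}$. As $\mathcal{O}_c(W,\h)_0$ has finite length, a Serre subcategory is determined by the set of irreducibles it contains, so it suffices to check that an irreducible $L\in\mathcal{O}_c(W,\h)_0$ is aspherical if and only if $\mathrm{Supp}\,L\ne\h$. If $c$ is not singular this is automatic: then $\mathcal{O}_c(W,\h)_0$ is semisimple by \cite{GGOR}, every irreducible is a standard module $M_c(W,\h,\tau)\cong\CC[\h]\otimes\tau$, which is free over $\CC[\h]$ (so has full support) and satisfies $e_WM_c(W,\h,\tau)=\Hom_W(\tau^*,\CC[\h])\ne0$ (so is not aspherical); thus $\mathcal{A}=\mathcal{O}_c^{\mathrm{tor}}=0$. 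If $c$ is singular then, for $W=S_n$ and $c\in(-1,0)$, $c$ is necessarily of the form $-r/m$ with $2\le m\le n$ (the denominator of $c$ must divide one of the degrees $2,\dots,n$), so Proposition \ref{asph} applies and gives exactly the required equivalence ``$L$ aspherical $\iff\mathrm{Supp}\,L\ne\h$''.

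Given $\mathcal{A}=\mathcal{O}_c^{\mathrm{tor}}$, the universal property of the Serre quotient yields
$$
\mathcal{O}_{\mathrm{spherical}}\;\simeq\;\mathcal{O}_c(W,\h)_0/\mathcal{A}\;=\;\mathcal{O}_c(W,\h)_0/\mathcal{O}_c^{\mathrm{tor}}\;\simeq\;\Rep \mathcal{H}_q(W),
$$
which is the statement of the corollary. I expect the main technical point to be the verification in the first paragraph that $e_W(-)$, restricted from $H_c(W,\h)$-modules to category $\mathcal{O}$, still has $\mathcal{O}_{\mathrm{spherical}}$ as essential image and remains a Serre localization (rather than merely a localization of the full module categories); once that and the cited properties of $\mathrm{KZ}$ are in place, Proposition \ref{asph} carries all the remaining content.
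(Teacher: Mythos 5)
Your proposal is correct and unpacks exactly the argument the paper has in mind: the paper's one-line proof asserts that both $\mathcal{O}_{\mathrm{spherical}}$ and $\Rep\mathcal{H}_q(W)$ are equivalent to $\mathcal{O}/\mathcal{O}_{\mathrm{tor}}$, citing \cite{GGOR} for the Hecke side and Proposition~\ref{asph} for the spherical side, and your write-up supplies the same identification (both functors are Serre quotients, with kernels matched via Proposition~\ref{asph}), including the routine reduction to the non-semisimple case $c=-r/m$.
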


\begin{proof} 
According to Proposition \ref{asph} and the paper \cite{GGOR}, 
both categories are equvalent to ${\mathcal O}/{\mathcal O}_{\rm
tor}$, 
where ${\mathcal O}_{\rm tor}$ is the Serre subcategory of
objects which are torsion as modules over 
${\mathcal C}[\h]$. 
\end{proof} 

\begin{corollary}\label{dj} For $c=-r/m$ as above, 
$L_c(\lambda)$ is aspherical if and only if the corresponding
partition $\lambda$ is not $m$-regular, i.e., if it contains some part at
least $m$ times. 
\end{corollary}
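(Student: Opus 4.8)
The plan is to reduce the statement, via Proposition \ref{asph}, to the known combinatorial description of the KZ functor in type $A$. By Proposition \ref{asph}, $L_c(\lambda)$ is aspherical exactly when its support is a proper subvariety of $\h$; and, as used in the proof of Corollary \ref{spheal} (following \cite{GGOR}), for $W=S_n$ with $-1<c<0$ the modules of non-full support are precisely the objects of the Serre subcategory ${\mathcal O}_{\mathrm{tor}}$ of ${\mathbb C}[\h]$-torsion modules, i.e.\ the objects killed by the KZ functor ${\mathcal O}_c(S_n,\h)_0\to{\mathcal H}_q(S_n)\text{-mod}$. Hence $L_c(\lambda)$ is aspherical if and only if $\mathrm{KZ}(L_c(\lambda))=0$, where $q=e^{2\pi i c}=e^{-2\pi i r/m}$ is a primitive $m$-th root of unity (since $(r,m)=1$).

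It then remains to show that $\mathrm{KZ}(L_c(\lambda))=0$ if and only if $\lambda$ is not $m$-regular, i.e.\ some part of $\lambda$ occurs at least $m$ times. For this I would invoke the type-$A$ analysis of \cite{GGOR}: after identifying ${\mathcal O}_c(S_n,\h)_0$ with the module category of the $q$-Schur algebra $S_q(n,n)$, under which KZ becomes the $q$-Schur functor to ${\mathcal H}_q(S_n)$, and using the Dipper--James classification of simple ${\mathcal H}_q(S_n)$-modules at an $m$-th root of unity, one obtains that KZ sends $L_c(\lambda)$ to a nonzero simple Hecke module when $\lambda$ is $m$-regular and to $0$ otherwise. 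The sign $c<0$ is what produces the condition ``$m$-regular'' rather than its transpose ``$m$-restricted'': the two are interchanged by the automorphism of $H_c(S_n)$ sending $c$ to $-c$ and $e_+$ to $e_-$. (A Mullineux twist may enter the matching of labels of the surviving simples, but it does not affect which $\lambda$ are killed, the Mullineux involution being a permutation of the set of $m$-regular partitions.) Combining the two steps proves the corollary; moreover, since KZ restricts to a bijection between the surviving simples and the simple ${\mathcal H}_q(S_n)$-modules, it also shows that the number of aspherical $L_c(\lambda)$ equals the number of non-$m$-regular partitions of $n$.

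The main obstacle is this second step. The cardinality count alone --- that $\mathrm{KZ}$ kills $p(n)-p_m(n)$ of the simples, where $p_m(n)$ is the number of $m$-regular partitions of $n$ --- is formal from Corollary \ref{spheal} and the Dipper--James theorem, and already recovers the numerical assertion; but pinning down that the kernel of KZ is \emph{labeled} by exactly the non-$m$-regular partitions requires the label-level compatibility of KZ with the highest-weight structures on the two sides (equivalently, that KZ carries the standard module $M_c(\lambda)$ to the Specht module $S^\lambda$ of ${\mathcal H}_q(S_n)$, whose head is $D^\lambda$ precisely when $\lambda$ is $m$-regular). I would isolate this as the single nontrivial external input, cite \cite{GGOR} (and the classical theory of Specht modules) for it, and present the rest as the straightforward reduction sketched above.
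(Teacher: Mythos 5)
Your first step — reducing, via Proposition \ref{asph} and Corollary \ref{spheal}, to showing that $\mathrm{KZ}(L_c(\lambda))=0$ exactly when $\lambda$ is not $m$-regular — is identical to the paper's. The second step is where you diverge: you invoke the equivalence of ${\mathcal O}_c(S_n,\h)_0$ with $S_q(n,n)$-modules under which KZ becomes the $q$-Schur functor, and then read off the answer from Dipper--James. That equivalence is not in \cite{GGOR}, as your write-up suggests; it is Rouquier's theorem (\cite{Rou}). The paper explicitly notes that the statement follows from \cite{Rou}, Section~5, but then deliberately avoids that black box and gives a self-contained inductive argument that uses only three ingredients from \cite{GGOR} --- exactness of KZ, the fact that it sends simples to simples or to zero, and $\mathrm{KZ}(M_c(\mu))=S_\mu$ --- together with Theorem~7.6 of \cite{DJ} on composition factors of Specht modules. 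The induction is on $N(\lambda)=\tfrac{n(n-1)}{2}-c(\lambda)$, where $c(\lambda)$ is the content: if $L_c(\lambda)$ is thin, the composition factors of $S_\lambda=\mathrm{KZ}(M_c(\lambda))$ are all $D_\mu$ with $\mu>\lambda$, forcing $\lambda$ to fail $m$-regularity; if thick, then $\mathrm{KZ}(L_c(\lambda))=D_{\nu}$ with $\nu\ge\lambda$, and the induction hypothesis plus injectivity of KZ on surviving simples forces $\nu=\lambda$, hence $\lambda$ $m$-regular. This pins down that $\mathrm{KZ}(L_c(\lambda))=D_\lambda$ on the nose, which eliminates the Mullineux ambiguity you flag (you are right that Mullineux cannot change which simples are killed, since it permutes the $m$-regular labels, but the paper's argument shows no twist occurs at all). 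In short: your route is valid and arguably shorter if one admits Rouquier's $q$-Schur equivalence, but you misattribute that equivalence, and the paper buys independence from it at the cost of a short combinatorial induction.
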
 

\begin{proof} Let $q=e^{2\pi ic}$, a primitive $m$-th root of
unity. Recall from \cite{DJ} that for every partition $\lambda$ we have
the Specht module $S_\lambda$ 
over the Hecke algebra ${\mathcal H}_q:={\mathcal H}_q(S_n)$ and
its quotient $D_\lambda$, which is either simple (if $\lambda$ is
$m$-regular) or zero (if not), and this gives an enumeration,
without repetitions, of irreducible representations of ${\mathcal
H}_q$. Moreover, it is known (\cite{DJ}, theorem 7.6)
that all the composition factors of $S_\lambda$ are 
$D_\mu$ with $\mu\ge \lambda$ (in the dominance ordering), and 
the multiplicity of $D_\lambda$ in $S_\lambda$ 
(when $D_\lambda$ is nonzero) is $1$. 

Let us say that a simple object $L$ of ${\mathcal O}_c(W,\h)$ is
thin if $KZ(L)=0$, otherwise let us say that it is thick.  
By Proposition \ref{asph}, $L_c(\lambda)$ is aspherical if and
only if it is thin. 

Our job is to show that $L_c(\lambda)$ is thick iff 
$\lambda$ is $m$-regular, and in this case
$KZ(L_c(\lambda))=D_\lambda$. This follows from the paper 
\cite{Rou} (Section 5), but we give a proof here for reader's convenience. 

Let $N(\lambda):=\frac{n(n-1)}{2}-c(\lambda)$, where 
$c(\lambda)$ is the content of $\lambda$. 
Note that if $\nu>\lambda$ then $N(\nu)<N(\lambda)$. 
We prove that the statement holds for $N(\lambda)\le k$, by
induction in $k$. 

If $k=0$ then $\lambda=(n)$ and the statement is clear. 
Now suppose the statement is known for $k-1$ and let 
us prove it for $k$. 

By \cite{GGOR}, Theorem 5.14, the KZ functor is exact and maps a simple object
either to zero or to a simple object, so for any $\mu$, $KZ(L_c(\mu))=0$ if
$L_c(\mu)$ is thin, and $KZ(L_c(\mu))=D_{\nu(\mu)}$ for some
$\nu=\nu(\mu)$ if $L_c(\mu)$ is thick. 
Also, by \cite{GGOR}, Corollary 6.10, $KZ(M_c(\mu))=S_\mu$.
This means that $\nu(\mu)\ge \mu$ for all $\mu$. 
 
Let $\lambda$ be such that $N(\lambda)=k$. If $L_c(\lambda)$ is
thin then by the above argument, $KZ(M_c(\lambda))$ has
composition factors $D_\mu$ with $\mu>\lambda$. Since
$KZ(M_c(\lambda))=S_\lambda$,
this implies that $S_\lambda$ 
has composition factors $D_\mu$ with $\mu>\lambda$. By Theorem
7.6 of \cite{DJ}, this implies
that $\lambda$ is not $m$-regular. On the other hand, if
$L_c(\lambda)$ is thick, then $\nu(\lambda)$ is
$m$-regular, and by the induction  
assumption, if $\nu(\lambda)>\lambda$ then 
$D_{\nu(\lambda)}$ also equals $KZ(L_c(\nu(\lambda)))$, so two
irreducible modules have the same nonzero image 
under the KZ functor, which contradicts Theorem 5.14 of
\cite{GGOR}. Thus, $\nu(\lambda)=\lambda$, and $\lambda$ is
$m$-regular. This completes the induction step. 
\end{proof}

\begin{remark}
Note that it is well known (and easy to see) that the generating function
for the number of $m$-regular partitions is 
$$
f_m(q)=\frac{\phi(q^m)}{\phi(q)},
$$
where $\phi$ is the Euler function, 
$$
\phi(q)=\prod_{n\ge 1}(1-q^n). 
$$
\end{remark}

\begin{remark} 
A. Okounkov and the first author conjectured that 
the number of aspherical representations in ${\mathcal
O}_c(S_n,\h)_0$ for each $n$ is given
by the rank of the residue 
of the connection describing the equivariant small quantum 
cohomology of the Hilbert scheme of $\CC^2$ at 
$q=-e^{2\pi i c}$ (\cite{OP}). According to \cite{OP}, 
this residue is proportional to the operator
$$
\sum_{s\ge 1}\alpha_{-ms}\alpha_{ms}
$$
on the degree $n$ part of the Fock representation of the
Heisenberg Lie algebra, with commutation relations  
$[\alpha_i,\alpha_j]=\delta_{i,-j}$.
Thus, the conjecture follows from Corollary \ref{dj}.  
Indeed, by Corollary \ref{dj} and the previous remark, 
the conjecture is equivalent to saying that the kernel of this operator 
has character $f_m(q)$, which is obvious, since this
kernel is the space of polynomials of $\alpha_{-i}$, $i\ge 1$, 
such that $i$ is not divisible by $m$. 

One can also observe that the eigenvalues of this residue are
proportional to the codimensions of supports of the 
modules in ${\mathcal O}_c(S_n,\h)_0$.
\end{remark}

\subsection{The simplicity of the spherical subalgebra for 
$-1<c<0$ in type $A$}

In \cite{BEG1}, it is shown that the algebra $H_c(W,\h)$ 
is simple if and only if $c$ is not a singular value, and in this
case $H_c(W,\h)$ is Morita equivalent to its spherical subalgebra
$e_W H_c(W,\h)e_W$. This implies that if $c$ is not singular, the spherical
subalgebra is simple, while if $c$ is singular and $H_c(W,\h)$ is
Morita equivalent to $e_W H_c(W,\h) e_W$, it is not.
However, it turns out that when $H_c$ and $e_W H_c e_W$ are not Morita 
equivalent, it can happen that $e_W H_c e_W$ is simple. 
Namely, we have the following result. 

\begin{theorem} The spherical subalgebra
$A_c(n):=e_{S_n}H_c(S_n,\h)e_{S_n}$ is simple for $c\in (-1,0)$. 
\end{theorem}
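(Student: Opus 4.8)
The plan is to show that a two-sided ideal $J\subset A_c(n)$ which is proper must be zero, by transporting the question to the full Cherednik algebra $H_c:=H_c(S_n,\h)$ and using the classification of aspherical representations just obtained. Recall from \cite{BEG1} that $e_{S_n}H_ce_{S_n}=A_c(n)$ and that $H_ce_{S_n}$ is a $(H_c,A_c(n))$-bimodule; by Lemma \ref{le}(iii) applied with $G=W=S_n$, $H=1$ (or directly from \cite{BEG1}), the failure of Morita equivalence between $H_c$ and $A_c(n)$ for $c\in Q_n$ is controlled precisely by the existence of aspherical modules, i.e. by $c\in\Sigma(S_n,\h)$. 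So the first step is to write down carefully the correspondence between two-sided ideals: $J\mapsto H_cJH_c$ and $I\mapsto e_{S_n}Ie_{S_n}$, and to recall that $I\mapsto e_{S_n}Ie_{S_n}$ is an order-preserving surjection from two-sided ideals of $H_c$ to two-sided ideals of $A_c(n)$ (this is standard for $eAe$ with $AeA$ possibly proper). Thus a proper nonzero ideal of $A_c(n)$ would lift to a proper nonzero ideal $I$ of $H_c$ with $e_{S_n}Ie_{S_n}\ne 0$, i.e. $I\not\subset H_ce_{S_n}H_c$ is false — rather $I$ is not contained in the "aspherical part"; more precisely one wants: if $A_c(n)$ is not simple then $H_c$ has a proper nonzero two-sided ideal $I$ with $e_{S_n}\cdot H_c/I\cdot e_{S_n}\ne 0$.

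The second step identifies the proper nonzero two-sided ideals of $H_c$ for $c\in(-1,0)$. Since $c\in(-1,0)$, Corollary \ref{gorsta} (Gordon–Stafford) tells us exactly which $c$ are aspherical, namely all of $Q_n$; for these $c$ the algebra $H_c$ is not simple, and by \cite{BEG1} its proper nonzero two-sided ideals are exactly the annihilators of the aspherical modules, which by Proposition \ref{asph} are precisely the simple modules $L_c(\lambda)$ whose support is not all of $\h$ — equivalently, by Corollary \ref{dj}, those with $\lambda$ not $m$-regular, where $m$ is the denominator of $c$. The key structural input here is that, for these $c$, the two-sided ideal $H_ce_{S_n}H_c$ is the unique maximal proper two-sided ideal of $H_c$: indeed $H_c/H_ce_{S_n}H_c$ is the largest quotient with no aspherical-free part, and since every proper nonzero ideal kills some aspherical simple, every proper ideal is contained in $H_ce_{S_n}H_c=\mathrm{Ann}(\text{spherical part})$. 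Granting this, any proper nonzero two-sided ideal $I$ of $H_c$ satisfies $I\subset H_ce_{S_n}H_c$, hence $e_{S_n}Ie_{S_n}\subset e_{S_n}H_ce_{S_n}H_ce_{S_n}$; but one computes $e_{S_n}H_ce_{S_n}H_ce_{S_n}=A_c(n)$ only if $H_ce_{S_n}H_c=H_c$, which fails, so in fact $e_{S_n}Ie_{S_n}$ lies in the proper ideal $e_{S_n}(H_ce_{S_n}H_c)e_{S_n}$. The cleanest way to finish is to argue directly that $e_{S_n}(H_ce_{S_n}H_c)e_{S_n}=A_c(n)$: since $e_{S_n}\in H_ce_{S_n}H_c$, we get $e_{S_n}=e_{S_n}\cdot e_{S_n}\cdot e_{S_n}\in e_{S_n}(H_ce_{S_n}H_c)e_{S_n}$, so this ideal contains the unit of $A_c(n)$ and is all of $A_c(n)$. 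Therefore $e_{S_n}Ie_{S_n}=A_c(n)$ for \emph{every} nonzero two-sided ideal $I$ of $H_c$, which forces $A_c(n)$ to have no proper nonzero two-sided ideal, i.e. $A_c(n)$ is simple.

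Let me restate the logical skeleton, since the last paragraph contains the actual argument. Let $I\ne 0$ be a two-sided ideal of $A_c(n)$. Then $H_cIH_c$ is a nonzero two-sided ideal of $H_c$, and $e_{S_n}(H_cIH_c)e_{S_n}\supseteq e_{S_n}I e_{S_n}$; but one checks $e_{S_n}(H_cIH_c)e_{S_n}=e_{S_n}H_ce_{S_n}\cdot I\cdot e_{S_n}H_ce_{S_n}=A_c(n)\,I\,A_c(n)=I$ (the middle equality uses $e_{S_n}H_ce_{S_n}=A_c(n)$ directly, no Morita hypothesis needed). Now $H_cIH_c$ is a nonzero two-sided ideal of $H_c$; either it equals $H_c$, in which case $I=e_{S_n}H_ce_{S_n}=A_c(n)$, or it is proper, in which case by \cite{BEG1} it kills some aspherical simple $L_c(\lambda)$; but then — and this is where $c\in(-1,0)$ is essential — Proposition \ref{asph} says $L_c(\lambda)$ has support $\ne\h$, so $e_{S_n}L_c(\lambda)=0$, so $L_c(\lambda)$ is a module over $A_c(n)$? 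No: that is the wrong direction. The correct finish: since $L_c(\lambda)$ is aspherical, $e_{S_n}H_ce_{S_n}$ acts on $e_{S_n}L_c(\lambda)=0$ trivially, which gives no contradiction by itself. Instead use: $H_cIH_c$ proper $\Rightarrow H_cIH_c\subseteq H_ce_{S_n}H_c$ (the unique maximal ideal, by the support/asphericity dichotomy of Propositions \ref{supp1}, \ref{asph} and Theorem \ref{main}) $\Rightarrow I=e_{S_n}(H_cIH_c)e_{S_n}\subseteq e_{S_n}(H_ce_{S_n}H_c)e_{S_n}$; and the last equals $A_c(n)$ because $e_{S_n}=e_{S_n}^3\in e_{S_n}(H_ce_{S_n}H_c)e_{S_n}$, contradicting properness of $I$ unless $I=A_c(n)$. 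Hence $A_c(n)$ is simple.

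\textbf{Main obstacle.} The delicate point is establishing that $H_ce_{S_n}H_c$ is the \emph{unique maximal} proper two-sided ideal of $H_c$ for $c\in(-1,0)$ — equivalently, that every proper nonzero two-sided ideal of $H_c$ is contained in it. This is where one really needs Corollary \ref{dj} together with the geometry of supports (Propositions \ref{supp}, \ref{supp1}, \ref{supp2}): one must rule out "non-aspherical" proper ideals, i.e. show that a proper two-sided ideal cannot miss all aspherical simples, which amounts to knowing that the quotient $H_c/H_ce_{S_n}H_c$ is finite-dimensional-by-nilpotent or at least has all composition factors among the aspherical $L_c(\lambda)$, and that the union of their annihilators is exactly $H_ce_{S_n}H_c$. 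Once the structure of two-sided ideals of $H_c$ is pinned down this way, the $eAe$ manipulation is purely formal. I expect this ideal-lattice analysis — reconciling the classification of aspherical reps with the lattice of two-sided ideals — to be the only substantive step; everything else is bookkeeping with $e_{S_n}$.
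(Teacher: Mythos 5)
Your reduction to a claim about the two‑sided ideal lattice of $H_c$ has a genuine gap, and moreover you've misidentified which containment is needed. If $H_cIH_c$ is proper and you establish $H_cIH_c\subseteq H_ce_{S_n}H_c$ (your ``unique maximal ideal'' scenario), then $I=e(H_cIH_c)e\subseteq e(H_ce_{S_n}H_c)e=A_c(n)$ is vacuously true and yields no contradiction with $I$ being proper. The inclusion that would actually finish the argument is the opposite one: you need $H_ce_{S_n}H_c\subseteq J$ for \emph{every} nonzero two‑sided ideal $J$ of $H_c$, i.e.\ that $H_ce_{S_n}H_c$ is the unique \emph{minimal} nonzero two‑sided ideal, equivalently $e_{S_n}\in J$ whenever $J\ne 0$. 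Then $e_{S_n}\in e_{S_n}(H_cIH_c)e_{S_n}=I$ and you win. But this minimality claim is nowhere established, and it is not an obvious consequence of the material in the paper: it would amount to saying that every nonzero two‑sided ideal annihilates no spherical module, or that every $H_c$‑module (not just those in category $\mathcal{O}$, and not just simples with non‑full support) on which a nonzero ideal acts by zero is automatically aspherical. Your own heuristic (``every proper nonzero ideal kills some aspherical simple'') does not give this: killing an aspherical simple is not the same as being contained in $H_ce_{S_n}H_c$, and in fact $H_ce_{S_n}H_c$ does \emph{not} annihilate spherical simples, so it is not $\mathrm{Ann}(\text{spherical part})$ as you write.

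The paper takes a different, geometric route that sidesteps this ideal‑lattice question entirely. Given a proper two‑sided ideal $I\subset A_c(n)$, it regards $M=A_c(n)/I$ as a bimodule over $\CC[\h]^{S_n}\otimes\CC[\h^*]^{S_n}$ (acting on the two sides), looks at its support $Z\subset(\CC^n/S_n)^2$, and uses Corollary~\ref{whit} together with Proposition~\ref{asph} to show that every nonempty fiber of either projection $Z\to\CC^n/S_n$ is the whole of $\CC^n/S_n$: the fiber $M_b$ is a nonzero object of $e_{S_n}\mathcal{O}_c(S_n,\h)_\lambda$, it cannot be aspherical for $c\in(-1,0)$, hence it has full support. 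It follows that $Z=(\CC^n/S_n)^2$, which forces $I=0$. If you want to keep a purely ring‑theoretic $eAe$ argument, you would first have to prove that $H_ce_{S_n}H_c$ is contained in every nonzero two‑sided ideal of $H_c$ for $c\in(-1,0)$ --- a nontrivial claim that would itself likely require the same support analysis the paper performs directly on $A_c(n)/I$.
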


\begin{proof}
Let $I\subset A_c(n)$ be a proper two-sided ideal, and consider 
the $A_c(n)$-bimodule $M:=A_c(n)/I\ne 0$. 
Let us regard $M$ as a module over \linebreak $\Bbb C[x_1,...,x_n]^{S_n}
\otimes \Bbb C[y_1,...,y_n]^{S_n}$ 
by acting with the first factor on the left side and with the second
one on the right side. Obviously, $M$ is finitely generated. 
Let $Z\subset (\Bbb C^n/S_n)^2$ be the support of $M$. 
Then $Z$ is a nonempty closed subvariety of $(\Bbb C^n/S_n)^2$.
Let $p_1,p_2: Z\to \Bbb C^n/S_n$ be the two projections. 
Let $b\in \Bbb C^n$ be a point such that $p_2^{-1}(S_nb)$ is nonempty. 
Then the fiber $M_b$\footnote{We abuse notation by denoting the
orbit of $b$ under $S_n$ also by $b$.} of $M$ is a nonzero left
$A_c(n)$-module, finitely generated over 
$\Bbb C[x_1,...,x_n]^{S_n}$, on which symmetric polynomials of $y_i$ act
locally finitely, with generalized eigenvalue $b$.
So it belongs to the category $e_{S_n}{\mathcal
O}_c(S_n,\h)_\lambda$. By Corollary \ref{whit} and 
Proposition \ref{asph}, this implies that the support of 
$M_b$ as a $\Bbb C[x_1,...,x_n]^{S_n}$-module is the entire $\Bbb
C^n/S_n$. Thus, $p_2^{-1}(b)=\Bbb C^n/S_n$ if it is nonempty. 
Similarly one proves that $p_1^{-1}(b)=\Bbb C^n/S_n$ if it is
nonempty. Thus, we find that $Z=(\Bbb C^n/S_n)^2$, which implies
that $I=0$.      
\end{proof} 

\subsection{A strengthening of the result of \cite{BFG}}

In this subsection we apply Corollary \ref{dj} 
to enhance the main result of
\cite{BFG}.

\subsubsection{A modification of a result of \cite{GG}}

We start with a slightly modified version of Theorem 6.6.1 of \cite{GG}. 
Let $K$ be a field of characteristic zero, $\g={\mathfrak
{gl}}_n(K)$, and $D(\g)$ the algebra of differential operators on
$\g$. Let $c$ be an indeterminate, and $V_c$ be the
representation of $\g$ on the space of ``functions'' of the form 
$(x_1....x_n)^c f(x_1,...,x_n)$, where $f$ is a Laurent polynomial
with coefficients in $K[c]$ of total degree zero (namely, $\g$
acts through its projection to ${\mathfrak
{sl}}_n(K)$). Let ${\rm Ann}V_c$ be the annihilator of $V_c$ in 
$U(\g)[c]$, and $J_c=D(\g)[c]{\rm ad}({\rm Ann V_c})$, where
${\rm ad}: U(\g)\to D(\g)$ is the adjoint action. It is shown in 
\cite{EG} that one has a filtration preserving homomorphism 
$$
\Phi_c: (D(\g)/D(\g)J_c)^\g\to A_c(n),
$$
where $A_c(n)$ is the spherical rational Cherednik algebra 
with parameter $c$ being an indeterminate
(here $D(\g)$ is filtered by order of differential operators, and 
$A_c(n)$ inherits the filtration from the full Cherednik algebra
$H_c(n)$, which is filtered by $\deg(\h^*)=\deg(S_n)=0$, $\deg(\h)=1$). 

\begin{theorem}\label{661}
The associated graded map ${\rm gr}\Phi_c$, and hence $\Phi_c$
itself, are isomorphisms.  
\end{theorem}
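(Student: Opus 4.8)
The plan is to prove that $\mathrm{gr}\,\Phi_c$ is an isomorphism; since $\Phi_c$ is filtration-preserving and both sides are filtered with the filtrations exhaustive and bounded below, this implies $\Phi_c$ itself is an isomorphism. The strategy is the familiar one from \cite{GG}: identify both associated graded algebras with rings of functions on explicit (commutative) varieties, and check that the induced map is an isomorphism of varieties. On the Cherednik side, $\mathrm{gr}\,A_c(n)$ is $\mathbb{C}[(\h\oplus\h^*)/S_n]$ (since the relevant filtration has $\deg(\h)=1$, $\deg(\h^*)=\deg(S_n)=0$, and $e_{S_n}H_c(n)e_{S_n}$ has associated graded equal to the $S_n$-invariants in $\mathbb{C}[\h\oplus\h^*]$). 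On the $D(\g)$ side, $\mathrm{gr}(D(\g)/D(\g)J_c)^\g$ is the ring of $\g$-invariant functions on the subvariety of $T^*\g = \g\times\g^*$ cut out by the symbol ideal of $J_c$, i.e.\ the (reduced) commuting-variety-type scheme $\{(X,Y) : [X,Y] \text{ lies in the appropriate coadjoint orbit closure associated to } V_c\}$, modulo $\g$. The content of Theorem 6.6.1 of \cite{GG} is precisely that this invariant ring is identified, via the moment-map/Hamiltonian reduction picture, with $\mathbb{C}[(\h\oplus\h^*)/S_n]$, and that $\mathrm{gr}\,\Phi_c$ realizes this identification.

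So the key steps, in order, are: (1) compute $\mathrm{gr}\,A_c(n) = (\mathbb{C}[\h\oplus\h^*])^{S_n} = \mathbb{C}[(\mathbb{C}^n\oplus(\mathbb{C}^n)^*)/S_n]$ (here one uses that $c$ is an indeterminate, so no numerical degeneracy occurs, and that the PBW theorem for $H_c(n)$ gives the flat family); (2) identify $\mathrm{gr}(D(\g)/D(\g)J_c)^\g$ with the invariant functions on the relevant symbol variety $\mathcal{Z}$ in $T^*\g$, which requires knowing that $J_c$ is a "good" ideal — that the symbol of $\mathrm{Ann}\,V_c$ generates a radical (or at least that taking $\mathrm{gr}$ commutes with the operations involved), this being exactly where \cite{GG}'s analysis of the nilpotent/semisimple structure of $\mathcal{Z}$ enters; (3) compute $\mathcal{Z}/\!\!/\g$ explicitly via the classical Hamiltonian reduction of $T^*\g$ (or $T^*\mathrm{Mat}$) at the relevant coadjoint orbit — this is the Calogero–Moser space computation — and match it with $(\mathbb{C}^n\oplus(\mathbb{C}^n)^*)/S_n$; (4) check that $\mathrm{gr}\,\Phi_c$ is the comorphism of this identification, hence bijective.

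The main obstacle is step (2)–(3): one must show that the associated graded of the quantum object $(D(\g)/D(\g)J_c)^\g$ is exactly the ring of functions on the classical reduced space, with \emph{no} extra nilpotents and with the full invariant ring (not a proper subring). Concretely this means proving that the symbol map is surjective onto $\mathbb{C}[\mathcal{Z}]^\g$ and that $\mathcal{Z}$ is reduced and irreducible of the expected dimension, so that $\mathbb{C}[\mathcal{Z}]^\g$ is a domain of the right Krull dimension. This is where one genuinely needs the geometry worked out in \cite{GG} — the structure of the variety of pairs $(X,Y)$ with $[X,Y]$ rank-one-plus-scalar, which is the ``Calogero–Moser'' flavor of the commuting variety. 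Once $\mathcal{Z}/\!\!/\g$ is known to be $\mathrm{Spec}\,\mathbb{C}[(\mathbb{C}^n\oplus(\mathbb{C}^n)^*)/S_n]$ as a scheme, the matching with $\mathrm{gr}\,A_c(n)$ is formal: both are the coordinate ring of the same affine variety, $\mathrm{gr}\,\Phi_c$ is a graded algebra map between them inducing the identity on the "obvious" generators (the images of $\mathbb{C}[\h]^{S_n}$ and $\mathbb{C}[\h^*]^{S_n}$, plus enough cross-terms), and a degree/generation argument finishes it.

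A remark on the role of the hypotheses: the only modification relative to \cite{GG} is treating $c$ as an indeterminate rather than a complex number, which actually \emph{simplifies} matters — it rules out the special $c$ at which $A_c(n)$ or the geometry could degenerate, so the flat family over $\mathbb{C}[c]$ lets one deduce the isomorphism from its validity at generic $c$, or directly over the polynomial ring. I would carry out the argument over $K[c]$ throughout, invoking \cite{GG} for the geometric input about $\mathcal{Z}$ and supplying the (now cleaner, since no bad specialization) comparison of associated graded rings.
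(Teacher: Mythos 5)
Your proposal follows essentially the same route as the paper: the paper's entire proof is the remark that the argument of Theorem 6.6.1 of \cite{GG} carries over verbatim once $c$ is treated as an indeterminate (which, as you correctly note, only simplifies matters by avoiding degenerate specializations). Your sketch of the associated-graded comparison via the almost-commuting / Calogero--Moser geometry is a faithful reconstruction of the \cite{GG} argument being invoked.
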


The difference between Theorem \ref{661} and Theorem 6.6.1 of
\cite{GG} is that in \cite{GG}, $c$ is any fixed 
element of $K$, while here $c$ is an indeterminate. 
However, this distinction is inessential, and 
the proof of Theorem \ref{661} is parallel to the proof of
Theorem 6.6.1 of \cite{GG}.

\subsubsection{The case of positive characteristic} 

Now fix a positive integer $n$ and let $k$ be an algebraically closed field of
sufficiently large (compared to $n$) prime characteristic $p$.

Let $X$ denote the Hilbert scheme of $n$ points on the plane ${\mathbb
A}^2_k$. As above, let $A_c=A_c(n)$ denote
the spherical rational Cherednik algebra 
with parameter $c$.

For $c\in {\mathbb F}_p$ an Azumaya algebra ${\mathbb A}_c$ of rank $p^{2n}$
on $X^{(1)}$ was defined in \cite{BFG}; here
the superscript $^{(1)}$ denotes the 
Frobenius twist. Furthermore, one has the following strengthened
version of the second statement of Theorem 7.2.1 of \cite{BFG}.

\begin{theorem}\label{721} For $p\gg n$, 
the algebra of global sections $\Gamma({\mathbb A}_c)$
is canonically isomorphic to $A_c$. 
\end{theorem}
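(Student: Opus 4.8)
The plan is to bypass the Morita equivalence between $H_c(S_n,\h)$ and its spherical subalgebra $A_c$ --- which, by Theorem \ref{main}(i), holds precisely when $c$ is not aspherical, and on which the original argument of \cite{BFG} relied --- and instead to identify both $\Gamma(\mathbb{A}_c)$ and $A_c$ with one and the same quantum Hamiltonian reduction algebra, using Theorem \ref{661} as the bridge. I would first recall the construction of $\mathbb{A}_c$ from \cite{BFG}: it arises from the relevant sheaf of Frobenius-twisted differential operators carrying a $GL_n$-action with moment map, by imposing the moment-map condition attached to the character determined by $c$, restricting to the stable locus, and descending to $X^{(1)}$. Being a quantum Hamiltonian reduction, this comes with a canonical filtered algebra homomorphism $\beta_c$ from the \emph{affine} reduction --- which, by the constructions of \cite{EG}, \cite{GG} and \cite{BFG}, is the algebra $(D(\g)/D(\g)J_c)^{\g}$ of Theorem \ref{661}, now formed over $k$ --- to $\Gamma(X^{(1)},\mathbb{A}_c)$.

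The first key step is that $\beta_c$ is an isomorphism: global sections of a quantum Hamiltonian reduction sheaf agree with the Hamiltonian reduction carried out directly on global sections, provided the requisite higher cohomology on the Hilbert scheme $X$ vanishes, which holds once $p\gg n$. Concretely, both sides carry the order filtration, the vanishing makes $\Gamma(X^{(1)},-)$ exact on the $\mathcal{O}$-coherent subquotients of $\mathbb{A}_c$, so ${\rm gr}\,\Gamma(X^{(1)},\mathbb{A}_c)=\Gamma(X^{(1)},{\rm gr}\,\mathbb{A}_c)$, and one checks that this coincides with ${\rm gr}$ of the affine reduction --- both being coordinate rings attached to the Hilbert--Chow contraction, the comparison on the reduction side reducing to the identity $e_{S_n}(k[\h\oplus\h^*]\rtimes S_n)e_{S_n}=k[\h\oplus\h^*]^{S_n}$. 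A filtered map inducing an isomorphism on associated gradeds is an isomorphism, so $\beta_c$ is one.

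The second key step is Theorem \ref{661}. Its proof establishes only that ${\rm gr}\,\Phi_c$ is an isomorphism, by an argument involving no denominators beyond those controlled by $n$; hence it descends from $K[c]$ to $\mathbb{Z}[1/N][c]$ for a suitable $N=N(n)$, and for every prime $p>N$ (that is, $p\gg n$) and every $c\in\mathbb{F}_p$ --- with no restriction whatsoever on $c$, in particular for the aspherical values --- it yields an isomorphism $\Phi_c\colon(D(\g)/D(\g)J_c)^{\g}\xrightarrow{\ \sim\ }A_c$ of $k$-algebras. Composing $\Phi_c$ with $\beta_c^{-1}$ gives the asserted $\Gamma(\mathbb{A}_c)\cong A_c$; it is canonical because $\Phi_c$ is given by the explicit formulas of \cite{EG} and $\beta_c$ is canonical. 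A minor bookkeeping point, already settled in \cite{BFG}, is that the parameter $c\in\mathbb{F}_p$ in $\mathbb{A}_c$ is matched with that of $A_c$ via the appropriate Artin--Schreier/Frobenius normalization.

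The main obstacle is the cohomology vanishing used in the first step: one must know that for $p$ large relative to $n$ the higher sheaf cohomology on $X$ of the coherent subquotients of the order filtration on $\mathbb{A}_c$ vanishes, so that global sections is exact and ${\rm gr}\,\Gamma(X^{(1)},\mathbb{A}_c)$ is as expected. This is the positive-characteristic counterpart of the vanishing theorems for Hilbert schemes, and it is precisely this input --- not any representation-theoretic estimate --- that forces the hypothesis $p\gg n$; the rest is formal once Theorem \ref{661} is in hand. One could partly sidestep it by checking instead that $\beta_c$ is an isomorphism over the open locus where $X^{(1)}\to({\rm Sym}^n\mathbb{A}^2)^{(1)}$ is an isomorphism --- whose complement has codimension two in that normal affine base --- and that ${\rm gr}$ of both $A_c$ and $\Gamma(\mathbb{A}_c)$ is reflexive there, whence an isomorphism in codimension $\le 1$ propagates; but this variant still needs the vanishing on the $\Gamma(\mathbb{A}_c)$ side.
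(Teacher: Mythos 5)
Your proposal is correct and takes essentially the same route as the paper: factor through the quantum Hamiltonian reduction algebra $(D(\g)/D(\g)J_c)^\g$, using the cohomology-vanishing argument of \cite{BFG} on the Hilbert-scheme side and Theorem~\ref{661} on the Cherednik side, with the key observation that treating $c$ as an indeterminate in Theorem~\ref{661} is exactly what upgrades the bound from the $c$-dependent $p>d(c)$ of \cite{BFG} to the uniform $p\gg n$. One small caveat on the framing: the original argument of \cite{BFG} already went via Hamiltonian reduction rather than via the Morita equivalence of $H_c$ with $e_WH_ce_W$, so there is no Morita step to ``bypass''; the genuine improvement here is solely that \cite{GG}'s result is replaced by its indeterminate-$c$ version.
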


Theorem 7.2.1 of \cite{BFG} claims that this statement holds for
(reduction mod $p$ of) any rational $c$ and $p>d=d(c)$, where $d(c)$ is a constant
depending on $c$. The proof of Theorem \ref{721} is similar to
the proof of Theorem 7.2.1 of \cite{BFG}, using 
Theorem \ref{661}, which had not been known when \cite{BFG}
appeared. 

Theorem \ref{721} implies that we have the functor
$R\Gamma:D^b(Coh(X^{(1)},{\mathbb A}_c))
\to D^b(A_c-mod^{fg})$ where $Coh(X^{(1)},{\mathbb A}_c)$ is the category of
coherent sheaves of ${\mathbb A}_c$-modules,
and $A_c-mod^{fg}$ is the category of finitely generated modules over $A_c$.
We say that ${\mathbb A}_c$ is {\em derived affine}
if this functor is an equivalence.

\begin{corollary}
For $p\gg n$ the Azumaya algebra ${\mathbb A}_c$ is derived affine if and
only if the inequality $c\ne -\frac{r}{m}$ holds in ${\mathbb F}_p$
for all integers $r,\, m$ such that $0<r<m\leq n$.
\end{corollary}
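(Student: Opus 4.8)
The plan is to reduce the corollary, by means of Theorem \ref{721} and the localization theory of \cite{BFG}, to the classification of aspherical parameters for $S_n$ obtained in Corollary \ref{gorsta}, and then to pass from characteristic zero to characteristic $p$ by a spreading-out argument. First I would record what Theorem \ref{721} buys us: it identifies $\Gamma({\mathbb A}_c)$ with $A_c$ uniformly for $p\gg n$ --- rather than only for $p>d(c)$, as in \cite{BFG} --- which is precisely the hypothesis under which the derived localization theorem of \cite{BFG} applies. Given this identification, that theorem characterizes derived affineness of ${\mathbb A}_c$ by a condition intrinsic to $A_c$: namely, ${\mathbb A}_c$ is derived affine if and only if $A_c$ has finite homological dimension, equivalently (by Theorem \ref{main}(i) and the standard relation between a rational Cherednik algebra and its spherical subalgebra) if and only if $H_c(S_n,\h)e_{S_n}H_c(S_n,\h)=H_c(S_n,\h)$ over $k$, i.e. if and only if $c$ is not an aspherical value over $k$. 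So the remaining task is to show that, for $p\gg n$, the parameter $c\in{\mathbb F}_p$ is aspherical exactly when $c=-r/m$ in ${\mathbb F}_p$ for some integers $0<r<m\le n$.

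Next I would treat the implication ``non-aspherical $\Rightarrow$ derived affine''. One views $H_c/H_ce_{S_n}H_c$ as a finitely generated module defined over ${\mathbb Z}[c,1/N]$ for a suitable $N$; the locus of parameters on which it is nonzero is closed, and by Corollary \ref{gorsta} its characteristic-zero fibre is the finite set $Q_n$ of rationals in $(-1,0)$ of denominator $\le n$, i.e. $\{-r/m:(r,m)=1,\,0<r<m\le n\}$. Spreading out, after enlarging $N$, for all $p\gg n$ the mod-$p$ fibre of this locus is exactly the reduction of $Q_n$ (each denominator $\le n<p$ being a unit, and, for $p>n^2$, distinct elements of $Q_n$ having distinct reductions). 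Hence if $c\ne -r/m$ for every such $r,m$, then $H_ce_{S_n}H_c=H_c$, so $e_{S_n}H_c$ is a progenerator of $H_c$-modules whose endomorphism ring is $A_c$, giving a Morita equivalence with $H_c$; and since $p\gg n$ forces $p\nmid|S_n|$ (so that ${\rm gr}\,H_c$ is a skew group algebra of a polynomial ring by a group of order prime to $p$), $H_c$ --- and therefore $A_c$ --- has finite homological dimension, and ${\mathbb A}_c$ is derived affine.

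For the converse I would argue as follows. If $c=-r/m$ in ${\mathbb F}_p$ with $0<r<m\le n$, then, reducing the fraction, we may take $(r,m)=1$ and $2\le m\le n$, so that $-r/m\in Q_n$ and $c$ lies in the bad locus modulo $p$ by the same spreading-out statement; concretely --- and this is where Corollary \ref{dj} enters --- the reduction mod $p$ of Corollary \ref{dj} shows that $L_c(\lambda)$ is aspherical for any non-$m$-regular partition $\lambda$ of $n$, and such $\lambda$ exist since $2\le m\le n$ (e.g. $(1^n)$). Thus $c$ is aspherical over $k$, so by the criterion above ${\mathbb A}_c$ is not derived affine (equivalently, $A_c$ has infinite homological dimension --- the characteristic-$p$ form of the classical fact, going back to Stafford, that the spherical subalgebra at an aspherical value is a simple ring of infinite global dimension). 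Combining the two implications proves the corollary; permitting non-coprime $r,m$ in the statement changes nothing, since $-r/m$ with $(r,m)=d$ equals $-(r/d)/(m/d)$ and $m/d\le n$.

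The step I expect to be the main obstacle is the spreading-out bookkeeping --- establishing that the characteristic-zero computation of the aspherical locus ($=Q_n$) really governs the mod-$p$ picture, in both directions, for all $p\gg n$ --- together with pinning down the precise form of the \cite{BFG} localization criterion actually needed, the delicate half being that asphericity forces infinite homological dimension of $A_c$ rather than mere failure of Morita equivalence.
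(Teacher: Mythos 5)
Your proposal follows the same route as the paper's own (short) proof: reduce via the BFG localization theorem to the condition $H_c e H_c = H_c$, determine that condition over characteristic zero from Corollary \ref{dj} (equivalently Corollary \ref{gorsta}), and then spread out to $p\gg n$. The only notable difference is cosmetic in the statement of the BFG criterion --- the paper cites it as ``derived affine iff $A_c$ is Morita equivalent to $H_c$, i.e. iff $H_ce H_c = H_c$,'' whereas you phrase it through finite homological dimension of $A_c$; these are equivalent here (since $H_c$ always has finite homological dimension in $p\gg n$, Morita equivalence with $H_c$ and finiteness of ${\rm gl.dim}\,A_c$ come to the same thing), so this is a harmless restatement. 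Your care about the spreading-out step (reductions of $Q_n$ being distinct and exhausting the bad fibre for $p$ large) and your separate treatment of the two implications are more explicit than the paper's one-sentence ``it follows that the same is true over a field of positive characteristic $p$ for almost all $p$,'' but they are supplying exactly the bookkeeping that sentence implicitly asserts, not introducing a new idea. So: correct, same approach, just more verbose.
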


\begin{proof} The results of \cite{BFG} 
show that ${\mathbb A}_c$ is derived affine if
and only if $A_c$ is Morita equivalent to
the full rational Cherednik algebra $H_c$, i.e. if and only if $H_c e H_c =
H_c$, where $e=e_{S_n}$.
Corollary \ref{dj} 
implies that over a characteristic zero field the last equality
holds exactly when $c\ne -\frac{r}{m}$ for $r,m$ as above.
It follows that the same is true over a field of positive characteristic $p$
for almost all $p$.
\end{proof} 

\newpage

\section{Appendix: Reducibility of the polynomial representation 
of the degenerate double affine Hecke algebra}

\vskip .05in 

\centerline{Pavel Etingof}

\vskip .05in 

\subsection{Introduction}

In this appendix (which has been previously posted as
arXiv:0706.4308) we determine the values of parameters $c$ for which the
polynomial representation of the degenerate double affine Hecke
algebra (DAHA), i.e. the trigonometric Cherednik algebra, is
reducible. Namely, we show that $c$ is a reducibility point 
for the polynomial representation of the trigonometric 
Cherednik algebra for a root system $R$ if and only if it is a 
reducibility point for the {\it rational}
Cherednik algebra for the Weyl group of some  
root subsystem $R'\subset R$
of the same rank given by (one step of) the
well known Borel-de Siebenthal algorithm, \cite{BdS}
(i.e., by deleting a vertex from the extended Dynkin diagram of
$R$).\footnote{It is known from the Borel-de Siebenthal theory 
that any maximal rank root subsystem is obtained by repeating
this process several times; however, the root subsystems $R'$ 
appearing in this appendix are the ones obtained by just one step
of the process; clearly, this contains all the maximal proper 
root subsystems, which correspond to the case where the label 
of the removed vertex is a prime number.}   

This generalizes to the trigonometric case the 
result of \cite{DJO}, where the reducibility points are found for
the rational Cherednik algebra. Together with the result of
\cite{DJO}, our result gives an explicit list of reducibility 
points in the trigonometric case. 

We emphasize that our result is contained in the recent previous work 
of I. Cherednik \cite{Ch2}, where reducibility points 
are determined for nondegenerate DAHA. Namely, the techniques 
of \cite{Ch2}, based on intertwiners, work equally well in 
the degenerate case. In fact, outside of roots of unity, 
the questions of reducibility of the polynomial representation 
for the degenerate and nondegenerate DAHA are equivalent (see
e.g. \cite{VV}, 2.2.4), and thus our result is equivalent to that of 
\cite{Ch2}. However, our proof is quite different from that in
\cite{Ch2}; it is based on the geometric approach 
to Cherednik algebras developed in \cite{E2},
and thus clarifies the results of \cite{Ch2} from a geometric
point of view. In particular, we explain that our result 
and its proof can be generalized 
to the much more general setting of Cherednik algebras for  
any smooth variety with a group action.   
 
We note that in the non-simply laced case, it is not true 
that the reducibility points for $R$ are the same in the
trigonometric and rational settings. In the trigonometric
setting, one gets additional reducibility points, which 
arise for type $B_n$, $n\ge 3$, $F_4$, and
$G_2$, but not for $C_n$. This phenomenon was discovered by
Cherednik (in the $B_n$ case, see \cite{Ch3}, Section 5);
in \cite{Ch2}, he gives a complete list of additional
reducibility points. At first sight, this list looks somewhat 
mysterious; here we demystify it, by interpreting it in 
terms of the Borel - de Siebenthal classification of 
equal rank embeddings of root systems. 

The result of this appendix is a manifestation of the general
principle that the representation theory of the trigonometric
Cherednik algebra (degenerate DAHA) for a root system $R$ reduces
to the representation theory of the {\it rational} Cherednik 
algebra for Weyl groups of root subsystems $R'\subset R$ obtained
by the Borel-de Siebental algorithm. 
This principle is the ``double'' analog of a similar
principle in the representation theory of affine Hecke algebras, 
which goes back to the work of Lusztig \cite{L}, in which it is
shown that irreducible representations of the affine Hecke algebra 
of a root system $R$ may be described in terms of irreducible 
representations of 
the degenerate affine Hecke algebras for Weyl groups 
of root subsystems $R'\subset
R$ obtained by the Borel - de Siebenthal algorithm. 
We illustrate this principle at the end of the 
note by applying it to finite
dimensional representations of trigonometric Cherednik algebras.  

{\bf Acknowledgements.} The author is very grateful to
I. Cherednik for many useful discussions, and for sharing the
results of his work \cite{Ch2} before its publication. 
The author also thanks G. Lusztig, M. Varagnolo, 
and E. Vasserot for useful discussions. 
The work of the author was  partially supported by the NSF grant
DMS-0504847.

\subsection{Preliminaries}

\subsubsection{Preliminaries on root systems}\label{prel}

Let $W$ be an irreducible Weyl group, $\h$ its (complex)
reflection representation, and $L\subset \h$ a $\Bbb Z$-lattice invariant
under $W$. 

For each reflection $s\in W$, let $L_s$ be the intersection of
$L$ with the $-1$-eigenspace of $s$ in $\h$, and let
$\alpha_s^\vee$ be a generator of $L_s$. Let $\alpha_s$ be the
element in $\h^*$ such that $s\alpha_s=-\alpha_s$, and
$(\alpha_s,\alpha_s^\vee)=2$. Then we have
$$
s(x)=x-(x,\alpha_s)\alpha_s^\vee,\ x\in \h.
$$

Let $R\subset \h^*$ be the collection of vectors $\pm \alpha_s$,
and $R^\vee\subset \h$ the collection of vectors
$\pm\alpha_s^\vee$. It is well known that $R,R^\vee$ are mutually
dual reduced root systems.  Moreover, we have $Q^\vee\subset
L\subset P^\vee$, where $P^\vee$ is the coweight lattice, and
$Q^\vee$ the coroot lattice.

Consider the simple complex Lie group $G$ with root system $R$,
whose center is $P^\vee/L$. The maximal torus of $G$ can be
identified with $H=\h/2\pi iL$ via the exponential map. 

For $g\in H$, let $C_\g(g)$ be the centralizer of $g$ in
$\g:={\rm Lie}(G)$.  Then $C_\g(g)$ is a reductive subalgebra of
$\g$ containing $\h$, and its Weyl group is the stabilizer $W_g$
of $g$ in $W$.

Let $\Sigma\subset H$ be the set of elements whose centralizer
$C_\g(g)$ is semisimple (of the same rank as $\g$). 
$\Sigma$ can also be defined as the set of point strata for the
stratification of $H$ with respect to stabilizers. It is well
known that the set $\Sigma$ is finite, and the Dynkin diagram of
$C_\g(g)$ is obtained from the extended Dynkin diagram of $\g$ by
deleting one vertex (the Borel-de Siebenthal
algorithm). Moreover, any Dynkin diagram obtained in this way
corresponds to $C_\g(g)$ for some $g$.

\subsubsection{The degenerate DAHA}

Let $W,L,H$ be as in subsection \ref{prel}. A {\it reflection
hypertorus} in $H$ is a connected component $T$ of the fixed set
$H^s$ for a reflection $s\in W$. Let $c$ be a conjugation
invariant function on the set of reflection hypertori. 
Denote by $\mathcal S$ the set of reflection hypertori. 
For $T\in {\mathcal S}$, denote by $s_T$ the corresponding
reflection, and by $\chi_T$ the affine linear map $H\to \Bbb C^*$ 
such that $\chi_T^{-1}(1)=T$. Let $H_{reg}$ denote the complement
of reflection hypertori in $H$. 

\begin{definition}(Cherednik, \cite{Ch1}) The degenerate DAHA
$H_c(W,H)$ attached to $W,H$ is the algebra generated inside 
$\Bbb C[W]\ltimes D(H_{reg})$ by polynomial functions on
$H$, the group $W$, and trigonometric Dunkl operators
$$
\partial_a+\sum_{T}c(T)\frac{d\chi_T(a)}{1-\chi_T}(s_T-1),
$$
\end{definition}
 
Using the geometric approach of \cite{E2}, which attaches a 
Cherednik algebra to any smooth affine algebraic variety with a 
finite group action, the degenerate DAHA can also be defined as 
the Cherednik algebra $H_{1,c}(W,H)$ attached to the variety $H$
with the action of the finite group $W$.  

Note that this setting includes the case of non-reduced root systems. 
Namely, in the case of a non-reduced root system the function $c$   
may take nonzero values on reflection hypertori which 
don't go through $1\in H$. 

\subsection{The results} 

\subsubsection{The main results} 

The degenerate DAHA has a polynomial representation $M=\Bbb C[H]$
on the space of regular functions on $H$. We would like to
determine for which $c$ this representation is reducible.

Let $g\in \Sigma$. Denote by $c_g$ the restriction of the
function $c$ to reflections in $W_g$; that is, for $s\in W_g$,
$c_g(s)$ is the value of $c$ on the (unique) hypertorus 
$T_{g,s}$ passing through $g$ and fixed by $s$. 

\begin{remark}
If $c(T)=0$ unless $T$ contains $1\in H$ (``the reduced
case''), then $c$ can be regarded as a function of
reflections in $W$, and $c_g$ is the usual restriction of $c$ to
reflections in $W_g$. 
\end{remark} 

Denote by 
${\rm Red}(W,\h)$ the set of $c$ at which the polynomial 
representation $M_c(W,\h,\Bbb C)$ of the rational Cherednik
algebra $H_c(W,\h)$ is reducible. These
sets are determined explicitly in \cite{DJO}.
Denote by ${\rm
Red}_g(W,L)$ the set of $c$ such that $c_g\in {\rm Red}(W_g,\h)$.

Our main result is the following. 

\begin{theorem}\label{irr}
The polynomial representation $M$ of $H_c(W,H)$ is reducible if
and only if $c\in \cup_{g\in \Sigma}{\rm Red}_g(W,L)$. 
\end{theorem}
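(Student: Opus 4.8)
The plan is to reduce the question of reducibility of the polynomial representation $M = \CC[H]$ of the trigonometric Cherednik algebra to a family of local questions, one at each point $g \in \Sigma$, and then identify each local question with reducibility of the polynomial representation of a rational Cherednik algebra. The key technical input is the localization/completion philosophy of Section~\ref{chera} together with the completion-at-a-point isomorphism $\theta$ of Theorem~\ref{comp}, transported to the trigonometric setting via the geometric approach of \cite{E2}: the trigonometric Cherednik algebra $H_c(W,H)$ sheafifies over $H/W$, and on the formal neighborhood of the image of any $g \in H$ it becomes (by the same deletion-of-regular-terms argument as in the Remark following Theorem~\ref{comp}) a matrix algebra over the completion at $0$ of the \emph{rational} Cherednik algebra $H_{c_g}(W_g,\h)$, where $\h$ is the reflection representation and $c_g$ is the restriction of $c$ to reflections in the stabilizer $W_g$. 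Crucially, since $\Sigma$ is exactly the set of point-strata of the stratification of $H$ by stabilizers (subsection~\ref{prel}), these are the only points where $W_g$ acts on $\h$ with no nonzero invariants, and hence the only points that can support a nonzero proper $\CC[H]$-torsion submodule after passing to the formal neighborhood.

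First I would make precise the statement that $M$ is reducible if and only if it has a nonzero proper submodule $M' \subset M$; since $M = \CC[H]$ is a torsion-free $\CC[H]$-module of generic rank one, any such $M'$ is supported on a proper $W$-invariant closed subvariety $X \subsetneq H$, and the quotient $M/M'$ has support contained in $X$. Second I would argue, using Proposition~\ref{supp} (more precisely its trigonometric analogue, which holds by the same proof since it only uses the flat connection along a stabilizer stratum coming from $\theta$), that $\Supp(M/M')$ is a union of strata, and hence contains some point stratum, i.e.\ some $g \in \Sigma$. Third, I would complete at such a $g$: the completion functor is exact and faithful on the relevant category, so $M$ is reducible if and only if its completion $\widehat{M}_g$ is reducible as a module over $\widehat{H_c}(W,H)_g$, and I would choose $g$ so that $\widehat{M}_g$ genuinely has a proper submodule. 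Fourth, via $\theta$ and Lemma~\ref{le}(i), $\widehat{M}_g$-modules correspond to $\widehat{H_{c_g}}(W_g,\h)_0$-modules, and under this correspondence $\widehat{M}_g$ goes to the completion of the polynomial representation $M_{c_g}(W_g,\h,\CC)$ of the rational Cherednik algebra (one checks the Dunkl-operator action matches on the polynomial representation, which is where the explicit formula \eqref{thetay} — with the regular-at-$g$ terms restored — does the bookkeeping). Applying Theorem~\ref{equi} back and forth, $\widehat{M}_g$ is reducible if and only if $M_{c_g}(W_g,\h,\CC)$ is, i.e.\ if and only if $c_g \in \Red(W_g,\h)$, i.e.\ $c \in \Red_g(W,L)$.

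For the converse direction, given $g \in \Sigma$ with $c_g \in \Red(W_g,\h)$, I would take a proper submodule of $M_{c_g}(W_g,\h,\CC)$, complete it, transport it through $\theta$ and $I$ to a proper submodule of $\widehat{M}_g$, and then descend: a proper submodule of the completion at $g$ need not a priori come from a submodule of $M$ itself, so I would instead observe that its existence forces $M$ to be reducible because $M \to \widehat{M}_g$ is a faithful exact functor on the category of $H_c(W,H)$-modules finitely generated over $\CC[H]$ with the appropriate finiteness — equivalently, use that $e$-torsion phenomena are detected stalkwise on $H/W$, so a nonzero proper sub in one stalk globalizes to a proper coherent $\CC[H]$-submodule after taking the $\CC[H]$-module generated by suitable sections. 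The main obstacle I anticipate is precisely this descent step — making rigorous that reducibility localizes correctly at points of $\Sigma$ and \emph{only} there — which requires combining the exactness of completion with the structural fact that $\Sigma$ captures all point-strata and that away from point-strata the completed polynomial representation stays irreducible because the relevant stabilizer subgroup still has invariants (so its rational-Cherednik polynomial representation is, on the invariant complement, genuinely of rank one over a polynomial subring and cannot split off torsion). A secondary technical point is verifying that the trigonometric analogues of Theorem~\ref{comp}, Theorem~\ref{equi}, and Proposition~\ref{supp} go through verbatim via the \cite{E2} formalism; these are routine but should be stated.
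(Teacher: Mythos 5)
Your forward direction is essentially the paper's proof: reduce to a nonzero proper submodule $M'\subset M$, observe that $\Supp(M/M')$ is a closed union of strata and therefore contains a point stratum $g\in\Sigma$, complete at $g$, and identify $\widehat M_g$ with the completion at $0$ of the rational polynomial representation via the completion isomorphism $\theta$ and the Morita equivalence of Lemma~\ref{le}(i). This all matches.

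The gap is in the converse. You correctly flag descent as the delicate step, but the fix you sketch --- ``$M\to\widehat M_g$ is a faithful exact functor,'' or ``globalize by taking the $\CC[H]$-module generated by suitable sections'' --- does not work. Completion at the single orbit of $g$ is certainly not faithful (anything supported off the orbit is killed), and faithful exactness by itself does not reflect reducibility of a fixed object. Taking the $\CC[H]$-submodule generated by sections of a proper submodule of the completion could perfectly well yield all of $M$. What is actually needed is the following concrete construction. Transport a nonzero proper submodule of $M_{c_g}(W_g,\h,\CC)$ (whose completion at $0$ remains nonzero proper by flatness and by the fact that the quotient lies in category $\mathcal O_0$) through the Morita equivalence to a nonzero proper $\widehat{H_c}(W,H)_g$-submodule $J\subset\widehat M_g$, and set $I:=J\cap \CC[H]$. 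Then $I$ is an $H_c(W,H)$-submodule of $M$, and it is proper since $1\notin J$. The nontrivial point, which your sketch omits entirely, is that $I\neq 0$: the quotient $\widehat M_g/J$ is a finitely generated module supported on the (formal neighborhood of the) reflection hypertori, so for $n\gg 0$ the $W$-invariant function $\Delta^n$ (where $\Delta$ has simple zeros along all reflection hypertori) annihilates it; hence $\Delta^n\in J$, and since $\Delta^n\in\CC[H]$ we get $\Delta^n\in I$. Without some such device to exhibit a global nonzero element of the intersection, the converse direction is not established.
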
 

The proof of this theorem is given in the next subsection. 

\begin{corollary}\label{irr1} 
If $c$ is a constant function (in particular, if $R$ is simply
laced), then the polynomial representation $M$ of $H_c(W,H)$ 
is reducible if and only if so is the polynomial representation
of the rational Cherednik algebra $H_c(W,\h)$, i.e. iff 
$c=j/d_i$, where $d_i$ is a degree of $W$, and $j$ is a positive
integer not divisible by $d_i$.  
\end{corollary}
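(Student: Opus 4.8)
The plan is to derive the Corollary from Theorem~\ref{irr} together with the explicit determination of the reducibility sets for \emph{rational} Cherednik algebras in \cite{DJO}. Note that the clause ``$M_c(W,\h,\Bbb C)$ is reducible iff $c=j/d_i$ with $d_i\nmid j$'' is literally the statement of \cite{DJO} applied to $W$ itself, so the real content is to match the condition of Theorem~\ref{irr} with reducibility of the rational algebra $H_c(W,\h)$. First I would record two reformulations. Since $c$ is constant, for every $g\in\Sigma$ the restriction $c_g$ is the same constant (now viewed on the reflections of $W_g$), so $c\in{\rm Red}_g(W,L)$ precisely when $c\in{\rm Red}(W_g,\h)$. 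And by \cite{DJO}, for any Weyl group $W'$ with degrees $d_1',\dots,d_n'$ a constant $c$ lies in ${\rm Red}(W',\h)$ iff $c=j/d_i'$ for some $i$ and some positive integer $j$ with $d_i'\nmid j$; writing $c=p/q$ in lowest terms with $q>0$, this is equivalent to the pair of conditions $q>1$ and $q\mid d_i'$ for some $i$. In particular every such set lies in $\Bbb Q_{>0}$, so constant $c\le 0$ are non-reducibility points on both sides and may be set aside.

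Granting this, Theorem~\ref{irr} says that the polynomial representation $M$ of $H_c(W,H)$ is reducible iff $c\in\bigcup_{g\in\Sigma}{\rm Red}(W_g,\h)$, and I claim this union equals ${\rm Red}(W,\h)$. The inclusion $\supseteq$ is immediate: $1\in H$ lies in $\Sigma$, since $C_\g(1)=\g$ is semisimple of full rank, and $W_1=W$. For $\subseteq$, by the reformulation above it suffices to show: for each $g\in\Sigma$, every divisor $q>1$ of a degree of $W_g$ also divides a degree of $W$.

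I would prove this divisibility statement uniformly, avoiding the classification. As $g\in\Sigma$, the centralizer $C_\g(g)$ is semisimple and contains $\h$ as a Cartan subalgebra, so $\h^{W_g}$, which is the centre of $C_\g(g)$, vanishes; hence $W_g\subset W$ acts on $\h$ as a finite reflection group of full rank $n$, and $\CC[\h]^W\subset\CC[\h]^{W_g}$ is an inclusion of polynomial rings by Chevalley--Shephard--Todd, the second with generator degrees $d_1',\dots,d_n'$. Moreover $\CC[\h]^{W_g}$ is a finitely generated $\CC[\h]^W$-module (it is a submodule of $\CC[\h]$, itself a finitely generated $\CC[\h]^W$-module; indeed it is even free, being a $\CC[\h]^W$-direct summand of $\CC[\h]$ via the Reynolds operator $e_{W_g}$). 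Hence, by the Hilbert--Serre theorem, its Hilbert series has the form $g(t)/\prod_{i=1}^n(1-t^{d_i})$ with $g\in\ZZ[t]$, where $d_1,\dots,d_n$ are the degrees of $W$; equating this with $\prod_{i=1}^n(1-t^{d_i'})^{-1}$ gives
$$
\prod_{i=1}^n (1-t^{d_i})\;=\;g(t)\,\prod_{i=1}^n (1-t^{d_i'}).
$$
Now if $q\mid d_{i_0}'$ for some $i_0$, then the $q$-th cyclotomic polynomial $\Phi_q(t)$ divides $1-t^{d_{i_0}'}$, hence the right-hand side, hence $\prod_{i=1}^n(1-t^{d_i})$; and since $\Phi_q$ is irreducible over $\Bbb Q$, it divides some factor $1-t^{d_i}$, i.e.\ $q\mid d_i$. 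This gives $\subseteq$, hence ${\rm Red}(W,\h)=\bigcup_{g\in\Sigma}{\rm Red}(W_g,\h)$, and the Corollary follows.

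The step that carries all the weight — and the one I expect to be the genuine obstacle — is the divisibility claim of the third paragraph. It is not soft: it fails for arbitrary Weyl subgroups of $W$ (for instance $W(G_2)$ embeds in $W(A_4)=S_5$ and has $6$ among its degrees, yet $6$ divides none of $2,3,4,5$), so the proof must use something particular to a Borel--de Siebenthal subgroup $W_g$. The argument above isolates what that is: merely the nested pair of polynomial invariant rings $\CC[\h]^W\subset\CC[\h]^{W_g}\subset\CC[\h]$, available because $W_g\subset W$ and both act on the same space $\h$. Alternatively one could verify the divisibility by running through the short list of maximal-rank root subsystems obtained by deleting a vertex of the affine Dynkin diagram and comparing their degrees against \cite{DJO}, but the invariant-theoretic argument is uniform and short, and that is the route I would take.
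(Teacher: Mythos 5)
Your proof is correct and follows essentially the same route as the paper's: reduce to the statement that every divisor $>1$ of a degree of $W_g$ divides a degree of $W$, and prove that by comparing the two invariant rings inside $\CC[\h]$. The paper compresses this into one sentence plus a footnote that runs the same Hilbert-series argument with Poincar\'e polynomials (it observes $P_W(t)/P_{W_g}(t)$ is a polynomial and evaluates at roots of unity), while you phrase it via cyclotomic factors $\Phi_q$; these are the same idea. Two small remarks. First, the paper states the divisibility fact more generally, for \emph{any} reflection subgroup $W'\subset W$ (i.e.\ any subgroup generated by elements that are reflections on $\h$), not just full-rank Borel--de Siebenthal subgroups; your proof only needs and only handles the full-rank case, but the extra hypothesis is not actually used anywhere in the argument. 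Second, your aside about $W(G_2)\subset S_5$ is a bit of a red herring: that $D_6$ inside $S_5$ is \emph{not} a reflection subgroup in the relevant sense (no pair of transpositions in $S_5$ has product of order $6$), so it is not a counterexample to the paper's ``well known fact,'' only to the naive statement about abstract reflection groups. What the fact really needs — and what you correctly use — is a nested pair of polynomial invariant rings on the same $\h$, not Borel--de Siebenthal per se.
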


\begin{proof}
The result follows from Theorem \ref{irr}, the result of
\cite{DJO}, and the well known fact\footnote{This fact is proved as follows. 
Let $P_W(t)$ be the Poincar\'e polynomial of $W$; 
so 
$$
P_W(t)=\prod_i \frac{1-t^{d_i(W)}}{1-t},
$$  
where $d_i(W)$ are the degrees of $W$. 
Then by Chevalley's theorem, $P_W(t)/P_{W'}(t)$ is a polynomial
(the Hilbert polynomial of the generators of 
the free module $\Bbb C[\h]^{W'}$ over $\Bbb C[\h]^W$).  
So, since the denominator vanishes at a root of unity
of degree $d_i(W')$, so does the numerator, which implies the
statement.} that for any subgroup
$W'\subset W$ generated by reflections, every degree of $W'$
divides some degree of $W$. 
\end{proof} 

However, if $c$ is not a constant function, 
the answer in the trigonometric case may differ from 
the rational case, as explained below.  

\subsubsection{Proof of Theorem \ref{irr}}

Assume first that the polynomial representation $M$ is
reducible. Then there exists a nonzero proper submodule $I\subset M$,
which is an ideal in $\Bbb C[H]$. This ideal defines a subvariety
$Z\subset H$, which is $W$-invariant; it is the support 
of the module $M/I$. It is easy to show using the results of
\cite{E2} (parallel to Proposition \ref{supp} of the present
paper) that $Z$ is a union of strata 
of the stratification of $H$ with respect to stabilizers. 
In particular, since $Z$ is closed, it contains a stratum which
consists of one point $g$. Thus $g\in \Sigma$. 
Consider the formal completion $\widehat{M}_g$ of $M$ at $g$. 
As follows from \cite{E2} (parallel to Section 3 of the present paper), 
this module can be viewed as a module over the formal completion
$\widehat{H}_{c_g}(W_g,\h)_0$ of the rational Cherednik algebra
of the group $W_g$ at $0$, and it has a nonzero proper  
submodule $\widehat{I}_g$. Thus, $\widehat{M}_g$ is reducible,
which implies (by taking nilpotent vectors under $\h^*$)
that the polynomial representation $\bar M$ over
$H_{c_g}(W_g,\h)$ is reducible, hence $c_g\in {\rm Red}(W_g,\h)$,
and $c\in {\rm Red}_g(W,L)$. 

Conversely, assume that $c\in {\rm Red}_g(W,L)$, and thus 
$c_g\in {\rm Red}(W_g,\h)$. Then the polynomial representation
$\bar M$ of $H_{c_g}(W_g,\h)$ is reducible. 
This implies that the completion $\widehat{M}_g=\widehat{\Bbb
C[H]}_g$ is a reducible 
module over $\widehat{H}_{c_g}(W_g,\h)_0$, i.e. it contains a
nonzero proper submodule (=ideal) $J$. Let $I\subset \Bbb C[H]$
be the intersection of $\Bbb C[H]$ with $J$. Clearly, 
$I\subset M$ is a proper submodule (it does not contain $1$). 
So it remains to show that it is nonzero. 
To do so, denote by $\Delta$ a regular function on $H$
which has simple zeros on all the reflection hypertori.  
Then clearly $\Delta^n\in J$ for large enough $n$, 
so $\Delta^n\in I$. Thus $I\ne 0$ and the theorem is proved. 

\subsubsection{Reducibility points in the non-simply laced case}

In this subsection we will consider the reduced non-simply laced
case, i.e. the case of root systems of type $B_n,C_n$, $F_4$, and
$G_2$. In this case, $c$ is determined by two numbers $k_1$ and
$k_2$, the values of $c$ on reflections for long and short roots,
respectively. 

The set ${\rm Red}(W,\h)$ 
is determined for these cases in \cite{DJO}, as the union of the
following lines (where $l\ge 1$, $u=k_1+k_2$, and $i=1,2$). 

$B_n=C_n$: 
$$
2jk_1+2k_2=l,\ l\ne 0\text{ mod }2, j=0,...,n-1,
$$
and 
$$
jk_1=l,\ (l,j)=1,\ j=2,...,n. 
$$

$F_4$: 
$$
2k_i=l,\ 2k_i+2u=l,\ l\ne 0\text{ mod }2; 
3k_i=l,\ l\ne 0\text{ mod }3; 
$$
$$
2u=l, 4u=l,\ l\ne 0\text{ mod }2; 
$$
$$
6u=l,\ l=1,5,7,11\text{ mod }12.
$$

$G_2$: 
$$
2k_i=l,\ l\ne 0\text{ mod }2;\ 3u=l,\ l\ne 0\text{ mod }3.
$$

By using Theorem \ref{irr}, we determine that 
the polynomial representation in the trigonometric case is
reducible on these lines and also on the following additional
lines: 

$B_n$, $n\ge 3$:
$$
(2p-1)k_1=2q, n/2<q\le n-1, p\ge 1, (2p-1,q)=1.
$$

$F_4$:
$$
6k_1+2k_2=l, 4k_1=l,\ l\ne 0\text{ mod }2. 
$$

$G_2$: 
$$
3k_1=l,\ l\ne 0\text{ mod }3. 
$$

In the $C_n$ case, we get no additional lines. 

Note that exactly the same list of additional reducibility points
appears in \cite{Ch2}.

\begin{remark} As explained above, 
the additional lines appear from particular equal rank
embeddings of root systems. Namely, the additional lines for $B_n$ appear from 
the inclusion $D_n\subset B_n$. 
The two series of additional lines for $F_4$ appear from the embeddings
$B_4\subset F_4$ and $A_3\times A_1\subset F_4$, respectively. 
Finally, the additional lines for $G_2$ appear from the embedding
$A_2\subset G_2$.
\end{remark}

\subsubsection{Generalizations}

Theorem \ref{irr} can be generalized, with essentially the same proof, to the
setting of any smooth variety with a group action, as defined in
\cite{E2}. 

Namely, let $X$ be a smooth algebraic variety, and $G$
a finite group acting faithfully on $X$. Let $c$ be a
conjugation invariant function on the set of pairs $(g,Y)$,
where $g\in G$, and $Y$ is a connected component of $X^g$ which
has codimension 1 in $X$. Let $H_{1,c,0,X,G}$ be
the corresponding sheaf of Cherednik algebras defined in
\cite{E2}. We have the polynomial representation ${\mathcal O}_X$
of this sheaf. 

Let $\Sigma\in X$ be the set of points with maximal stabilizer,
i.e. points whose stabilizer is bigger than that of nearby
points. Then $\Sigma$ is a finite set. For $x\in X$, let $G_x$ be the
stabilizer of $x$ in $G$; it is a finite subgroup of $GL(T_xX)$. 
Let $c_x$ be the function of reflections in $G_x$ defined by 
$c_x(g)=c(g,Y)$, where $Y$ is the reflection hypersurface 
passing through $x$ and fixed by $g$ pointwise. 
Let ${\rm Red}_x(G,X)$ be the set of $c$ such that 
$c_x\in {\rm Red}(G_x,T_xX)$ (where, as before, ${\rm Red}(G_x,T_xX)$
denotes the set of values of parameters $c$ for which 
the polynomial representation of the rational Cherednik
algebra $H_c(G_x,T_xX)$ is reducible). 

Then we have the following theorem, whose statement and proof are direct
generalizations of those of Theorem \ref{irr} (which is obtained
when $G$ is a Weyl group and $X$ a torus). 

\begin{theorem}\label{irr2}
The polynomial representation ${\mathcal O}_X$ of $H_{1,c,0,X,G}$
is reducible if and only if $c\in \cup_{x\in \Sigma}{\rm
Red}_x(G,X)$.  
\end{theorem}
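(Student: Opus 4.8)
The plan is to run, essentially word for word, the proof of Theorem~\ref{irr}, now with $X$ in place of the torus $H$, the finite group $G$ in place of the Weyl group $W$, the codimension-one components of the fixed loci $X^{g}$ ($g\in G$) in place of the reflection hypertori, and, for a point $x\in\Sigma$, the linearized data $(G_{x},T_{x}X)$ in place of $(W_{g},\h)$. Two geometric facts about the sheaf $H_{1,c,0,X,G}$, both established in \cite{E2}, play the roles that Proposition~\ref{supp} and Theorem~\ref{comp} play in the present paper: (i) the support in $X$ of any $\mathcal O_{X}$-coherent object of the relevant category $\mathcal O$ over $H_{1,c,0,X,G}$ is a union of strata of the stratification of $X$ by stabilizers; and (ii) for any $x\in X$, after choosing a $G_{x}$-equivariant formal isomorphism between the formal neighborhood of $x$ in $X$ and that of $0$ in $T_{x}X$, the completion of $H_{1,c,0,X,G}$ along the orbit $Gx$ is naturally isomorphic to $Z(G,G_{x},\widehat{H_{c_{x}}}(G_{x},T_{x}X)_{0})$. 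Under (ii) and the Morita equivalence of Lemma~\ref{le}(i), completed $H_{1,c,0,X,G}$-modules supported near $x$ correspond to $\widehat{H_{c_{x}}}(G_{x},T_{x}X)_{0}$-modules, and the completion of the polynomial representation $\mathcal O_{X}$ goes over to the completion $\widehat{\CC[T_{x}X]}_{0}$ of the polynomial representation $M_{c_{x}}(G_{x},T_{x}X,\CC)$.

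For the ``only if'' direction, suppose $\mathcal O_{X}$ is reducible and pick a proper nonzero $H_{1,c,0,X,G}$-submodule $I\subset\mathcal O_{X}$; this is a $G$-stable ideal subsheaf, and $Z:=\mathrm{Supp}(\mathcal O_{X}/I)$ is nonempty (since $\mathcal O_{X}/I\neq 0$), proper (since $I\neq 0$ and $\mathcal O_{X}$ is torsion-free on the irreducible $X$), closed and $G$-invariant. By (i), $Z$ is a union of strata; choose a stratum $S\subseteq Z$ of minimal dimension. Since $Z$ is closed we have $\overline S\subseteq Z$, while $\overline S\setminus S$ is a union of strata of dimension $<\dim S$, none of which can lie in $Z$ by minimality; hence $\overline S\setminus S=\emptyset$, so $S$ is closed, and $S\neq X$ because $Z\neq X$. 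Thus $S$ is nowhere dense, so every $x\in S$ has points of strictly smaller stabilizer arbitrarily nearby, i.e.\ $x\in\Sigma$. Fix such an $x$ and pass to the completion along $Gx$: by (ii) and Lemma~\ref{le}(i) this yields a $\widehat{H_{c_{x}}}(G_{x},T_{x}X)_{0}$-module isomorphic to $\widehat{\CC[T_{x}X]}_{0}$, and it is reducible --- the completion of $I$ supplying a submodule that is proper because $x\in Z$ and nonzero because $I$ has full support. Applying $E$, which by Theorem~\ref{equi} is an equivalence onto ${\mathcal O}_{c_{x}}(G_{x},T_{x}X)_{0}$, we conclude that $\CC[T_{x}X]=M_{c_{x}}(G_{x},T_{x}X,\CC)$ is reducible, i.e.\ $c_{x}\in\mathrm{Red}(G_{x},T_{x}X)$ and hence $c\in\mathrm{Red}_{x}(G,X)$.

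For the ``if'' direction, suppose $c\in\mathrm{Red}_{x}(G,X)$ for some $x\in\Sigma$, so the polynomial representation $\CC[T_{x}X]$ of $H_{c_{x}}(G_{x},T_{x}X)$ has a proper nonzero submodule $N_{0}$. By Proposition~\ref{supp}, $\mathrm{Supp}(\CC[T_{x}X]/N_{0})$ is a proper union of strata, hence lies in the union of the reflection hyperplanes of $G_{x}$ in $T_{x}X$. Completing $N_{0}$ at $0$ and transporting to $X$ via (ii), we obtain a proper nonzero $\widehat{H_{c_{x}}}(G_{x},T_{x}X)_{0}$-submodule $J$ of $\widehat{\CC[T_{x}X]}_{0}\cong\widehat{\mathcal O}_{X,x}$. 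Let $I\subset\mathcal O_{X}$ be the subsheaf whose germ at $x$ consists of the sections whose completion lies in $J$ and which equals $\mathcal O_{X}$ away from $Gx$; since $J$ is stable under $\widehat{\mathcal O}_{X,x}$ and under the completed Dunkl operators, $I$ is an $H_{1,c,0,X,G}$-submodule, and it is proper because $1\notin J$. It is nonzero: choosing a function $\Delta$ defined near $x$ with simple zeros along the reflection hypersurfaces through $x$, the support statement above forces (the completion of) $\Delta^{n}$ into $J$ for $n\gg 0$, so the germ of $\Delta^{n}$ lies in $I_{x}\neq 0$. Hence $\mathcal O_{X}$ is reducible, and combining the two directions proves the theorem.

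The step I expect to require the most care is the passage, in the ``only if'' direction, from a global submodule of $\mathcal O_{X}$ to a submodule of the completed \emph{rational} Cherednik module at a point $x$ that may lie in a positive-dimensional closed stratum --- a situation which in the torus case of Theorem~\ref{irr} does not arise, since there every closed stratum is a single point. One must verify that (ii), the Morita equivalence of Lemma~\ref{le}(i) and the equivalence $E$ of Theorem~\ref{equi} all remain available and mutually compatible at such an $x$, and that ``support is a union of strata'' holds in the variety setting; all of this is provided by \cite{E2}, but the equivariant formal identification of a neighborhood of $x$ in $X$ with a neighborhood of $0$ in $T_{x}X$, and the resulting identification of polynomial representations, must be tracked carefully.
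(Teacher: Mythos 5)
Your proof is correct and follows the approach the paper intends; the paper itself gives no argument for Theorem \ref{irr2}, merely asserting that the statement and proof are ``direct generalizations'' of those of Theorem \ref{irr}, and you carry out that generalization faithfully, with the local completion isomorphism from \cite{E2} and the Morita equivalence of Lemma \ref{le}(i) playing exactly the same roles as Theorem \ref{comp} and Lemma \ref{le}(i) do in the torus case. You also rightly flag, and then handle, a real subtlety that the phrase ``direct generalization'' suppresses: in the torus case every closed stratum is a single point, whereas on a general $X$ the minimal-dimensional stratum $S$ contained in $\mathrm{Supp}(\mathcal O_X/I)$ may be positive-dimensional. Your resolution --- observe that $S$ is closed by minimality, and that for any $x\in S$ the local data $(G_x,T_xX,c_x)$ are constant along $S$, so it suffices to localize at any one such $x$ --- is exactly what is needed. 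One consequence worth stating explicitly: this forces $\Sigma$ to be read as the union of the non-open strata (the set of points at which the stabilizer jumps), which is in general not finite. The paper's claim that $\Sigma$ is finite suggests it has only the zero-dimensional strata in mind, but for that narrower $\Sigma$ the theorem would already fail for $X=\CC^2$ with $G=\ZZ/2$ acting on one coordinate (there are no zero-dimensional strata, yet the polynomial representation is reducible at the reducibility values of the rank-one rational Cherednik algebra). Your broader reading of $\Sigma$ is the one for which the statement is true and for which your argument is a proof.
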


Note that this result generalizes in a straightforward way to the case
when $X$ is a complex analytic manifold, and $G$ a discrete
group of holomorphic transformations of $X$. 

\subsection{Finite dimensional representations of the
degenerate double affine Hecke algebra}\footnote{The contents of
this subsection 
arose from a discussion of the author with M. Varagnolo and E. Vasserot.}

Another application of the approach of this appendix is a description
of the category of finite dimensional representations of the 
degenerate DAHA in terms of categories of finite dimensional
representations of rational Cherednik algebras. 
Namely, let $FD(A)$ denote the category of finite dimensional
representations of an algebra (or sheaf of algebras) $A$. 
Then in the setting of the previous subsection we have the
following theorem (see also Proposition 2.22 of \cite{E2}). 

Let $\Sigma'$ be a set of representatives of $\Sigma/G$ in
$\Sigma$.

\begin{theorem}\label{fd1}
One has 
$$
FD(H_{1,c,0,X,G})=\oplus_{x\in \Sigma'}FD(H_{c_x}(G_x,T_xX)).
$$
\end{theorem}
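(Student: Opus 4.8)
The plan is to reduce the description of finite dimensional representations of the degenerate DAHA (more generally, of $H_{1,c,0,X,G}$) to the completions at the finitely many special points $x\in\Sigma$, and then invoke the local structure theorem identifying these completions with completions of rational Cherednik algebras. First I would observe that any finite dimensional representation $V$ of the sheaf $H_{1,c,0,X,G}$ is a quasicoherent sheaf whose support is a finite $G$-invariant subset of $X$; since the polynomial ring $\mathcal O_X$ acts and $V$ is finite dimensional, $V$ decomposes as a direct sum over this finite support, and each summand is supported (as an $\mathcal O_X$-module) at a single $G$-orbit. I would then argue that such a support must be contained in $\Sigma$: away from $\Sigma$ the Dunkl operators act, so a nonzero vector killed by a maximal ideal at a non-special point would generate, via the Dunkl operators $\partial_a + \ldots$, an infinite-dimensional space (this is the analog of Proposition \ref{supp}, applied to a single point rather than a subvariety). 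Hence $\operatorname{Supp}V\subset\Sigma$, and by $G$-equivariance $V=\oplus_{x\in\Sigma'}V_x$ where $V_x$ is supported on the $G$-orbit of $x$.

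Next I would fix $x\in\Sigma'$ and analyze the summand $V_x$. Since $V_x$ is supported set-theoretically on the orbit $Gx$, it is naturally a module over the completion $\widehat{H_{1,c,0,X,G}}_x$ of the sheaf of algebras at (the image in $X/G$ of) $x$. Here I would use the local structure result of \cite{E2} — the exact analog of Theorem \ref{comp} of the present paper — which gives an isomorphism $\widehat{H_{1,c,0,X,G}}_x\cong Z(G,G_x,\widehat{H_{c_x}}(G_x,T_xX)_0)$, using that $T_xX$ is the tangent space and $c_x$ the induced function on reflections in $G_x$. By Lemma \ref{le}(i) the category of modules over the centralizer algebra $Z(G,G_x,A)$ is equivalent to the category of $A$-modules, so finite dimensional modules over $\widehat{H_{1,c,0,X,G}}_x$ correspond to finite dimensional modules over $\widehat{H_{c_x}}(G_x,T_xX)_0$. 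Finally, a finite dimensional module over the completion $\widehat{H_{c_x}}(G_x,T_xX)_0$ is the same as a finite dimensional module over $H_{c_x}(G_x,T_xX)$ itself, because any finite dimensional module over the Cherednik algebra is automatically $\mathfrak m$-nilpotent (it is killed by a power of the maximal ideal), hence extends to the completion, and conversely a finite dimensional module over the completion restricts to one over the dense subalgebra. Assembling these equivalences over all $x\in\Sigma'$ yields the claimed direct sum decomposition.

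I would present this as a chain of category equivalences: $FD(H_{1,c,0,X,G})\xrightarrow{\sim}\oplus_{x\in\Sigma'}FD(\widehat{H_{1,c,0,X,G}}_x)\xrightarrow{\sim}\oplus_{x\in\Sigma'}FD(Z(G,G_x,\widehat{H_{c_x}}(G_x,T_xX)_0))\xrightarrow{\sim}\oplus_{x\in\Sigma'}FD(\widehat{H_{c_x}}(G_x,T_xX)_0)\xrightarrow{\sim}\oplus_{x\in\Sigma'}FD(H_{c_x}(G_x,T_xX))$, where the first arrow is the support decomposition, the second is the local structure theorem from \cite{E2}, the third is Lemma \ref{le}(i), and the fourth is the nilpotency/completion comparison. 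The finiteness of $\Sigma$ (stated in Subsection \ref{prel} for tori and holding in general by the stratification argument) guarantees the sum is finite and well-defined.

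The main obstacle I expect is the first step: proving cleanly that $\operatorname{Supp}V\subset\Sigma$ for a finite dimensional $V$, i.e., that the support of a finite dimensional module consists only of points with maximal stabilizer. The argument is the same in spirit as Proposition \ref{supp}, but there it is phrased for the whole category $\mathcal O$ and uses the flat connection on the restriction bundle; here one wants a more direct statement. The cleanest route is probably: if $x$ is in the support but $G_x$ is not "full" (there are nearby points with smaller stabilizer, but the support is not a union of strata), derive a contradiction from the $W$-equivariant local system structure of $V$ on the stratum through $x$ — any nonzero such local system on a positive-dimensional stratum is infinite-dimensional as a vector space. One must be slightly careful that $\Sigma$ is exactly the set of point-strata, so that $V$ being finite dimensional forces its support to avoid all higher-dimensional strata. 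Once this topological input is secured, the rest of the proof is a formal concatenation of already-established equivalences, and essentially identical to the proof of Corollary \ref{whit} combined with Lemma \ref{le}.
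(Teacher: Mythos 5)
Your proposal follows essentially the same route as the paper's own proof: decompose $V$ over its finite support, use the stratification argument to force the support into $\Sigma$, pass to the completion at each orbit, and invoke the local structure theorem of \cite{E2} together with the centralizer equivalence (Lemma \ref{le}) to land in finite dimensional modules over the rational Cherednik algebra; then reverse the chain. Your more explicit treatment of the centralizer step and of the comparison between finite dimensional modules over $H_{c_x}(G_x,T_xX)$ and its completion fills in details the paper only sketches, but the underlying argument is the same.
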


\begin{proof}
Suppose $V$ is a finite dimensional representation of 
$H_{1,c,0,X,G}$. Then the support of $V$ is a union of finitely
many points, and these points must be strata of the
stratification of $X$ with respect to stabilizers, so 
they belong to $\Sigma$. This implies that $V=\oplus_{\xi\in
\Sigma/G} V_\xi$, where $V_\xi$ is supported on the orbit $\xi$. 
Taking completion of the Cherednik algebra at $\xi$, we 
can regard the fiber $(V_\xi)_x$ for $x\in \xi$ as a module over the rational
Cherednik algebra $H_{c_x}(G_x,T_xX)$ (as follows from 
\cite{E2} and the main results of the present paper). 
In this way, $V$ gives rise to an
object of $\oplus_{x\in \Sigma'}FD(H_{c_x}(G_x,T_xX))$.

This procedure can be reversed; this implies the theorem. 
\end{proof} 

\begin{corollary}\label{trig}
One has 
$$
FD(H_c(W,H))=\oplus_{g\in \Sigma/W}FD(H_{c_g}(W_g,\h)).
$$
\end{corollary}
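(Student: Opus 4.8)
The plan is to derive Corollary \ref{trig} directly from Theorem \ref{fd1} by specializing the general geometric setup to the case where $X = H$ is the maximal torus and $G = W$ is the Weyl group acting on it. First I would recall that, as explained in the appendix, the degenerate DAHA $H_c(W,H)$ is precisely the Cherednik algebra $H_{1,c,0,X,G}$ attached to $X = H$ with the $W$-action (this identification is already stated in the subsection ``The degenerate DAHA''). So the left-hand sides of the two statements agree verbatim once we make this substitution.

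Next I would identify all the ingredients of Theorem \ref{fd1} in this special case. The set $\Sigma \subset X$ of points with maximal stabilizer becomes exactly the finite set $\Sigma \subset H$ defined in Subsection \ref{prel}: indeed, it is stated there that $\Sigma$ ``can also be defined as the set of point strata for the stratification of $H$ with respect to stabilizers,'' which is the same as the set of points whose stabilizer is strictly larger than that of nearby points. For $g \in \Sigma$, the stabilizer $G_x = W_g$ is the subgroup fixing $g$, which (again by Subsection \ref{prel}) is the Weyl group of the reductive centralizer $C_\g(g)$; and the tangent space $T_g H$ is canonically identified with $\h$ since $H = \h/2\pi i L$, compatibly with the $W_g$-action. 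Finally, the function $c_x$ of reflections in $G_x$ becomes $c_g$, the restriction of $c$ to reflections in $W_g$ as defined just before Theorem \ref{irr}: $c_x(s) = c(s, Y)$ where $Y$ is the reflection hypersurface through $g$ fixed by $s$, which is exactly the hypertorus $T_{g,s}$, so $c_x(s) = c(T_{g,s}) = c_g(s)$.

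With all these identifications in place, Theorem \ref{fd1} reads
$$
FD(H_c(W,H)) = \bigoplus_{g \in \Sigma'} FD(H_{c_g}(W_g, \h)),
$$
where $\Sigma'$ is a set of representatives for $\Sigma/W$; writing $\bigoplus_{g \in \Sigma/W}$ in place of $\bigoplus_{g \in \Sigma'}$ is just a harmless change of notation, and this is precisely the assertion of Corollary \ref{trig}. So the proof is essentially a dictionary: one simply checks that each term in the general theorem unwinds to the claimed term in the torus case.

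The main (and really only) obstacle is the verification that the tangent-space data matches up correctly and $W$-equivariantly: one must confirm that $T_g H \cong \h$ as a representation of $W_g$ in a way that sends the reflection-hypertorus structure at $g$ to the reflection arrangement in $\h$, so that the rational Cherednik algebra $H_{c_g}(W_g, T_g H)$ genuinely coincides with $H_{c_g}(W_g, \h)$. Since $H = \h/2\pi i L$ and the exponential map is a $W$-equivariant local isomorphism near any point, this is routine, but it is the one place where a careless identification could go wrong (e.g. confusing $\h$ with $\h^*$, or forgetting the decomposition $\h = \h^{W_g} \oplus (\h^{W_g})^\perp$). Everything else is formal bookkeeping, so I would keep the proof short: cite Theorem \ref{fd1}, record the identifications $X = H$, $G = W$, $\Sigma$ as in Subsection \ref{prel}, $G_x = W_g$, $T_x X = \h$, $c_x = c_g$, and conclude.
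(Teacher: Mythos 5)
Your proof is correct and follows exactly the route the paper intends: Corollary \ref{trig} is stated without a separate proof precisely because it is the specialization of Theorem \ref{fd1} to $X=H$, $G=W$, with the dictionary $\Sigma\leftrightarrow\Sigma$, $G_x=W_g$, $T_gH\cong\h$, $c_x=c_g$ that you spell out. Your attention to the $W_g$-equivariant identification $T_gH\cong\h$ via the exponential map is the right (if routine) point to flag, and the rest is, as you say, bookkeeping.
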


\begin{remark}
Recall that a representation of $H_c(W,H)$ is said to be
spherical if it is a quotient of the polynomial representation. 
It is clear that the categorical equivalence of Corollary
\ref{trig} preserves sphericity of representations (in both
directions). This implies that the results of the paper
\cite{VV}, which classifies spherical finite-dimensional
representations of the rational Cherednik algebras, in fact
yield, through Corollary \ref{trig}, the classification 
of spherical finite dimensional representations of 
degenerate DAHA, and hence of nondgenerate DAHA outside of roots
of unity. We note that the general classification of finite
dimensional representations of Cherednik algebras
outside of type A remains an open problem.  
\end{remark}

\end{document}